\documentclass[12pt]{article}

\usepackage{amsfonts,amssymb,amstext,graphics,amsmath,color}



\usepackage{amsmath,amssymb,amsbsy,upref}

\usepackage[english]{babel}
\usepackage{graphicx}
\usepackage{graphics} 
\usepackage{epsfig} 
\usepackage{color}
\usepackage{amsthm}  



\newcommand{\ee}{\varepsilon}

\newcommand{\Bb}{{\mathcal B}}

\newcommand{\Hh}{{\mathcal H}}

\newcommand{\lie}{\mathop{\displaystyle
{\stackrel{\scriptstyle{\varepsilon \to 0}}{\hbox to 40 pt
{\rightarrowfill}}  } }}
\newcommand{\lik}{\mathop{\displaystyle
{\stackrel{\scriptstyle{k \to +\infty}}{\hbox to 40 pt
{\rightarrowfill}}  } }}



\setlength{\textwidth}{17.5cm} \setlength{\textheight}{23.3cm}
\setlength{\topmargin}{-1cm} \setlength{\oddsidemargin}{-0.4cm}
\setlength{\headheight}{0cm} \setlength{\headsep}{0cm}

\numberwithin{equation}{section}

\newtheorem{Theorem}{Theorem}[section]
\newtheorem{Lemma}[Theorem]{Lemma}
\newtheorem{Proposition}[Theorem]{Proposition}
\newtheorem{Corollary}[Theorem]{Corollary}

\newtheorem{Remark}[Theorem]{Remark}

\usepackage{titlesec}
\titleformat{\section}
{\bf\large}{\thesection}{7pt}{\bf\large}
\titleformat{\subsection}
{\bf}{\thesubsection}{5pt}{\bf}
\titleformat{\subsubsection}
{\bf}{\thesubsubsection}{3pt}{\bf}

\begin{document}

\title{\bf\Large Spectral gaps in a double-periodic perforated \\ Neumann waveguide}
\author{\normalsize{Delfina G{\'o}mez$^a$, Sergei A. Nazarov$^b$, Rafael Orive-Illera$^{c,d}$, Maria-Eugenia P\'erez-Mart\'inez$^e$ } \vspace{0.2cm}\\
\small{$^a$ Departamento de Matem\'aticas, Estad\'{\i}stica y Computaci\'on, Universidad de Cantabria,} \vspace{-0.1cm} \\
\small{Santander, Spain, \texttt{gomezdel@unican.es}} \vspace{0.1cm} \\
\small{$^b$ Institute for Problems in Mechanical Engineering of the Russian Academy of Sciences,} \vspace{-0.1cm}\\
\small{Saint Petersburg,  Russia, \texttt{srgnazarov@yahoo.co.uk}}\vspace{0.1cm}\\
\small{$^c$ Instituto de Ciencias Matem\'aticas, CSIC-UAM-UC3M-UCM, Madrid, Spain,}\vspace{-0.1cm}\\
\small{$^d$ Departamento de Matem\'aticas, Universidad Aut\'onoma de Madrid,}\vspace{-0.1cm}\\
\small{Madrid, Spain, \texttt{rafael.orive@icmat.es}}\vspace{0.1cm}\\
\small{$^e$ Departamento de Matem\'atica Aplicada y Ciencias de la Computaci\'on, Universidad de Cantabria,}\vspace{-0.1cm}\\
\small{Santander, Spain, \texttt{meperez@unican.es}}
}
\date{}

\maketitle

\vspace*{-0.7cm}
\begin{center}
\begin{minipage}{6in}
\thispagestyle{empty} \setlength{\baselineskip}{6pt}
{\small {\bf Abstract}
{We examine the band-gap structure of the spectrum of
the Neumann problem for the Laplace operator in a strip with
periodic dense transversal perforation by identical holes of a small
diameter $\varepsilon>0$.
The periodicity cell  itself contains a string of holes at a distance $O(\varepsilon)$ between them.
Under assumptions on the symmetry of the
holes, we derive and justify asymptotic formulas for the endpoints
of the spectral bands in the low-frequency range of the spectrum  as $\varepsilon \to 0$.
We  demonstrate that, for $\varepsilon$ small enough, some spectral gaps
are open. The position and size of the opened gaps depend on
the strip width, the perforation period, and certain integral
characteristics of the holes. The asymptotic behavior of the
dispersion curves near the band edges is described by means of a
`fast Floquet variable' and involves boundary layers in the vicinity of the
perforation string of holes. The dependence on the Floquet parameter
of the model problem in the periodicity cell requires  a serious
modification of the standard justification scheme in homogenization
of spectral problems. Some open questions and possible
generalizations are listed. }

\vspace*{0.2cm}
{\bf Keywords}: band-gap structure, spectral perturbations, homogenization, perforated media, Neumann-Laplace operator, waveguide

\vspace*{0.2cm}
{\bf MSC}: 35B27, 35P05, 47A55, 35J25, 47A10}
\end{minipage}
\end{center}

\vspace*{0.5cm}

\section{Introduction}\label{sec1}

In this section, we formulate the spectral problem under consideration, cf. Section~\ref{subsec11}, and provide some background which relates it with a parametric family of homogenization problems, the so-called model problem.
In Section~\ref{subsec13} we provide the structure of the paper while its framework  in the literature is in Section~\ref{subsec12}.

\subsection{Formulation of the problem}\label{subsec11}
Let
\begin{equation}\label{(1)}
\Pi=\{x=(x_1,x_2)\in{\mathbb R}^2:\, x_1\in{\mathbb R}, x_2\in(0,H)\}
\end{equation}
be an open strip of width $H>0$ and let $\omega$ be a domain in the
plane ${\mathbb R}^2$ which is bounded by a smooth simple closed curve
$\partial \omega$ and has the compact closure
$\overline{\omega}=\omega\cup\partial\omega$ inside $\Pi$.
Let $\varepsilon=N^{-1} $ where  $N$ is a large natural number.
We introduce the strip $\Pi^\varepsilon$, see Figure~\ref{fig1} a), obtained from $\Pi$  perforated by the family of holes
 \begin{equation}\label{(2)}
 \omega^\varepsilon(j,k)=\{x:\,\varepsilon^{-1}(x_1-j,x_2-  \varepsilon k  H)\in\omega\},\quad
j\in{\mathbb Z},\,\,k=0,1,\dots,N-1,
\end{equation}
distributed periodically along line segments parallel to the
ordinate $x_2$-axis. Each hole is homothetic to $\omega$ of ratio $\varepsilon$ and translation of $\varepsilon\omega=\omega^\varepsilon(0,0).$ Namely,
\begin{equation}\label{(3)}
\Pi^\varepsilon=\Pi\setminus\overline{\Omega^\varepsilon}\quad\mbox{\rm
where}\quad\Omega^\varepsilon= \bigcup\limits_{j\in{\mathbb Z}}
\bigcup\limits_{k=0}^{N-1} \omega^\varepsilon(j,k).
\end{equation}
The period of perforation along the abscissa $x_1$-axis in
the domain $\Pi^\varepsilon$ is made equal to one by rescaling,  which also fixes the dimensionless
width $H>0$.
The period along the $x_2$-axis is $\varepsilon H $ with $\varepsilon\ll 1.$

\begin{figure}[t]
\begin{center}
\resizebox{!}{1.8cm}{\includegraphics{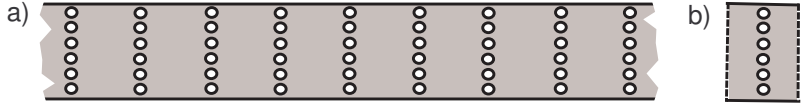}}
\caption{  a)  The perforated strip $\Pi^\ee$. b) The periodicity cell  $\varpi^\ee$.}
\label{fig1}
\end{center}
\end{figure}

We consider the spectral Neumann problem
 \begin{equation}\label{(4)}
 -\Delta_x u^\varepsilon(x)=\lambda^\varepsilon u^\varepsilon(x),\,\, x\in\Pi^\varepsilon,
\end{equation}
 \begin{equation}\label{(5)}
 \partial_\nu u^\varepsilon(x)=0,\,\, x\in\partial\Pi^\varepsilon,
\end{equation}
where $\partial_\nu$ is the directional derivative along the outward
normal while $\partial_\nu=\pm\partial/\partial x_2$ at the lateral
sides $\Upsilon_\pm=\{x:\,x_1\in{\mathbb R},x_2=(H\pm H)/2\}$ of the
strip \eqref{(1)}. The variational formulation of the problem
\eqref{(4)}, \eqref{(5)} reads: to find a function $u^\varepsilon$
in the Sobolev space $H^1(\Pi^\varepsilon)$, $u^\varepsilon\not\equiv 0$,  and a number $\lambda^\varepsilon\in {\mathbb C}$ such that
the integral identity
 \begin{equation}\begin{array}{c}
 (\nabla_xu^\varepsilon,\nabla_xv^\varepsilon)_{\Pi^\varepsilon}=\lambda^\varepsilon
 ( u^\varepsilon, v^\varepsilon)_{\Pi^\varepsilon}\quad \forall v^\varepsilon\in H^1(\Pi^\varepsilon)
 \end{array}\label{(7)}\end{equation}
is valid, cf.~\cite{Lad}.
Here, $\nabla_x=\mbox{\rm grad}$, $\Delta_x=\nabla_x\cdot\nabla_x$
is the Laplace operator and $(\,,)_{\Pi^\varepsilon}$ stands for the
natural scalar product in the Lebesgue space $L^2(\Pi^\varepsilon)$.

Since the bi-linear form on the left of \eqref{(7)} is positive,
symmetric, and closed in $H^1(\Pi^\varepsilon)$,
problem \eqref{(7)}  is associated with a positive self-adjoint
operator $A^\varepsilon$ in the Hilbert space $L^2(\Pi^\varepsilon)$
with the domain
 $$
{\cal D}(A^\varepsilon)=\{u^\varepsilon\in
H^2(\Pi^\varepsilon):\,\eqref{(5)}\,\,\mbox{\rm is verified}\}.
 $$
Clearly, the spectrum $\sigma(A^\varepsilon)$ belongs to the closed
real positive semi-axis $[0,+\infty)=\overline{{\mathbb
R}_+}\subset{\mathbb C}$. Moreover, according to the
Floquet--Bloch--Gelfand theory, see for instance
\cite{ReedSimon,Skrig,NaPl,Kuchbook,AllaireConca_JMPA}, the spectrum gets
the band-gap structure
 \begin{equation}\begin{array}{c}
 \sigma(A^\varepsilon)=\bigcup\limits_{p\in{\mathbb
 N}}\beta_p^\varepsilon\, ,
 \end{array}\label{(8)}\end{equation}
where the bands $\beta_p^\varepsilon$ are connected and compact sets
in $\overline{{\mathbb R}_+}=[0,+\infty)$.
The $\beta_p^\varepsilon$ are related to the eigenvalues, cf.~\eqref{(19)},
of the model problem in the periodicity cell
\begin{equation}\begin{array}{c}
\varpi^\varepsilon=\{x\in\Pi^\varepsilon:\,\vert x_1 \vert<1/2\},
\end{array}\label{(10)}\end{equation}
see Figure~\ref{fig1} b), which itself constitutes  a homogenization problem, cf.~\eqref{(11)}--\eqref{(14)}.
The {\it spectral bands} $\beta_p^\varepsilon$ and $\beta_{p+1}^\varepsilon$ may intersect
each other but  can also be disjoint so that the {\it spectral gap}
$\gamma_p^\varepsilon$ becomes open between them.
Recall that an open spectral gap is recognized as a nontrivial open interval in
${\mathbb R}_+$ which is free of the essential spectrum but has both
endpoints in it. If
$\beta^\varepsilon_p\cap\beta^\varepsilon_{p+1}\not=\varnothing$,
then we say that the gap $\gamma^\varepsilon_p$ is closed.
In Figure~\ref{fig3} the open spectral gaps correspond with the projections  of the shaded bands on the ordinate axis.

The main goal of our paper is to show that, under certain
restrictions on the width $H$ and the perforation shape, the problem
\eqref{(4)}, \eqref{(5)} can get at least one open gap in its
spectrum. Also, we aim to derive asymptotic formulas for the position
and geometric characteristics of several bands and gaps in the
low-frequency range of the spectrum. It should be mentioned that the
traditional homogenization procedure in the problem \eqref{(4)},
\eqref{(5)} does not help to detect open gaps.
The crucial role is played by the boundary layer phenomenon, cf.~Section~\ref{sec3}, while the width of the gaps is
expressed in terms of certain integral characteristics of the Neumann  hole $\omega$ of unit size in the strip $\Pi$  with the periodicity conditions at its lateral sides,  cf. \eqref{(MEP94)}, \eqref{(MEP99)} and  Remark~\ref{Remark_m1}.  At the same time, we construct explicitly only the main correction term in the
asymptotics of eigenvalues of the model problem in the
periodicity cell and analyze different
 situations when this term is not
sufficient to conclude whether  a concrete spectral gap is actually open
or not (see Section~\ref{sec8}). Moreover, for a  technical reason, cf. Section~\ref{subsec45}, and for simplification of asymptotic structures, we make the
assumption
 \begin{equation}\label{(symm)}
\omega=\{\xi=(\xi_1,\xi_2)\in{\mathbb
R}^2:\,(\xi_1,H-\xi_2)\in\omega\}.
\end{equation}
which means that the holes possess the mirror symmetry (see Figure~\ref{fig-mirror}).
Also, for simplicity, we assume that
the boundary of $\omega$ is of class $C^\infty.$

\begin{figure}[t]
\begin{center}
\scalebox{0.35}{\includegraphics{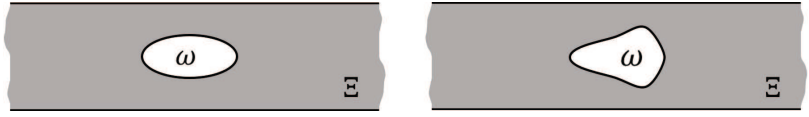}}
\end{center}
\caption{The strip $\Xi$ with two different possible geometries for the hole $\omega$.}
\label{fig-mirror}
\end{figure}

\subsection{State of art}\label{subsec12}

The continuous spectrum in a  cylindrical
waveguides of different physical nature is always a ray
$[\lambda_\dagger,+\infty)$ so that, above the cutoff value
$\lambda_\dagger\geq 0$ wave processes surely occur. The spectrum
of a periodic waveguide gets far complicated band-gap
structure \eqref{(8)} and the spectral bands implying {\it passing
zones} for waves can be separated from each other by spectral gaps
which do not permit propagation of waves with the corresponding
frequencies and, therefore, become {\it stopping zones}. This
phenomenon is used in different engineering devices, such as  wave
filters and wave dampers.

Within the Floquet--Bloch--Gelfand theory, see e.g. \cite{BLP,ReedSimon,Skrig,SHSP,Kuchbook,NaPl,CoPlaVa}, mathematical studies of
spectra with the band-gap structures need to find out the eigenvalues
of spectral elliptic boundary value problems which are posed in the
periodicity cell and involve an additional continuous parameter
$\eta\in[-\pi,\pi]$, the {\em Floquet parameter} or the {\em Gelfand dual variable}.
It is a very rare situation when such a problem admits
explicit solutions while computational methods become rather
expensive to present the whole family of {\em dispersion curves},
projections of which on the ordinate $\lambda$-axis involve the
spectral bands. As usual, variational and asymptotic methods help to
prove or disprove the existence of open spectral gaps in a certain
range of the spectrum and to estimate their geometrical
characteristics.

There are numerous  publications in which open spectral
gaps are detected due to high-contrast of coefficients in differential
operators or shape irregularities of the periodicity cells, see
\cite{Ger1,Ger2,Zhikov,na461,BaNaRu,BaNa,BaCaNaTa,BaMEP} and
\cite{na454,na501,Post,NaTa,BaTa} {\color{black}among others}. Such singular perturbations
often provide disintegration of the periodicity cells in the limit
and, as a result, the appearance of sufficiently wide gaps in the low-
and/or middle-frequency ranges of the spectrum. Both variational
and asymptotic methods have been employed in the cited papers to
detect and describe those gaps.

Another way to open spectral gaps related to the {\it splitting of
band edges}, is used in our paper. In the case when two spectral
bands of the limit problem, cf. Section~\ref{subsec22}, intersect but just
touch each other at a point, that is, there is a common edge of the bands,
small perturbations of the coefficients or of
the boundary may lead to a separation of these bands and the  opening of a
narrow gap between them, cf. Sections~\ref{subsec61}, \ref{subsec64}, \ref{subsec71} and \ref{subsec72}. This effect
is well-known in the physical literature, but its mathematical  study using operators theory and spectral perturbation methods started in \cite{na453,BoPa,BoPa_Phys,na537}.
In this paper a new type of the singular
perturbation of the periodicity cell is analyzed by means of the
homogenization technique and several ways to open spectral gaps are highlighted.

Finally, let us mention that, from a geometrical viewpoint, \cite{SOME} is the closest paper in the literature. It addresses
the Dirichlet perforation in a quantum waveguide. The asymptotic of the spectrum of the  equation \eqref{(4)}, with the Dirichlet condition $u^\varepsilon=0$ on $\partial \Pi^\varepsilon$ is considered, finding out the position and sizes of the spectral gaps and bands.
However, the results differ very much from those in this paper.
Indeed,  roughly speaking, the Dirichlet spectrum consists  of small, of order $O(\varepsilon)$, spectral bands which are separated from each other by spectral gaps of width $O(1)$.
In contrast, the Neumann spectrum here considered consists of long, of order $O(1)$, bands which are separated from each other by short spectral gaps of order $O(\varepsilon)$, or even less. The latter makes the asymptotic analysis much more complicated and delicate; in particular, it becomes multiscale in several variables, not only in the geometrical ones, but also in the Floquet parameter. As outlined above,  the justification procedure also becomes much more complicated.
For a link between the model problem in the waveguide with Neumann or Dirichlet conditions, let us mention \cite{GoNaOrPe_ruso}.

\begin{figure}[t]
\begin{center}
\resizebox{!}{5cm} {\includegraphics{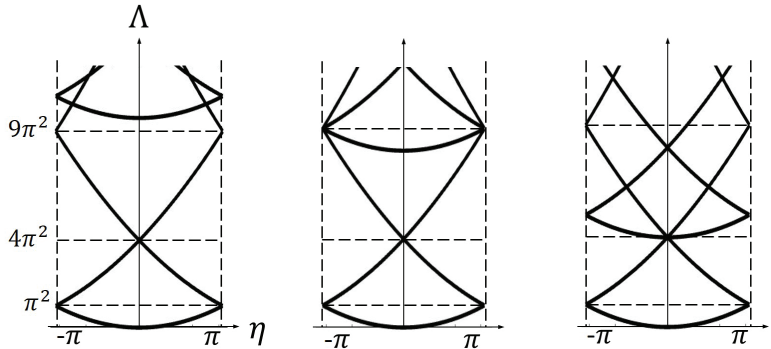}}
\caption{The dispersion curves of the limit problem in the cases  $H<1/3$,  $H=1/\sqrt{8}$ and $H=1/2.$}
\label{fig_new}
\end{center}
\end{figure}

\subsection{Architecture of the paper}\label{subsec13}

In  Section~\ref{sec2} we formulate
the model spectral problem in the periodicity cell, {b) in Figure~\ref{fig1},
which is itself a parametric spectral homogenization problem. We obtain the homogenized problem by the classical homogenization theory  in perforated media, see, e.g., \cite{nuestroOleinik},  that  is, a problem in the rectangular periodicity cell without perforations. We list explicit solutions of the homogenized problem and we study the dispersion curves which form the trusses in Figures~\ref{fig_new} and \ref{fig2}, while we classify the truss nodes, namely, the crossing points of the dispersion curves.
In Section~\ref{subsec23},  we show the convergence  result for the spectrum of the model problem towards that of the homogenized one as a consequence of another stronger one, which also  allows a perturbation of the Floquet-parameter.

In Section~\ref{sec3}  we discuss the boundary layer phenomenon arising in the
vicinity of the perforation. In particular, we examine several
solutions of the Laplace equation in the unbounded strip $\Pi$ with the only
hole $\overline \omega$, and we introduce the integral characteristics
$m_1(\Xi)$, $m_2(\Xi)$ and $m_3(\Xi)$ {\color{black}for} the Neumann problem in the
domain
 $\Xi=\Pi\setminus{\overline \omega}$
 (see Figure~\ref{fig-mirror} and \eqref{def_Xi}) with the periodicity conditions on the lateral sides, cf.~the
traditional harmonic polarization and virtual mass tensors in the
exterior domain ${\mathbb R}^2\setminus\overline{\omega}$ in \cite{PoSe}.

In Section~\ref{sec4} we perform the preliminary formal asymptotic analysis for simple eigenvalues using the method of matched asymptotic
expansions, cf.~\cite{VD,SHSP,Ilin,MaNaPl} for two scale asymptotic expansions. In Section~\ref{sec5}
we derive  error estimates in the case of simple eigenvalues  which will help us to detect open gaps
after a  much more thorough  analysis of multiple eigenvalues. The
perturbation of crossing dispersion curves require serious modifications of the standard asymptotic procedures because we can
no longer deal with a fixed Floquet parameter but we must investigate
the asymptotic behavior of the eigenvalues in a neighborhood of each
truss node, i.e., with the Floquet parameter in a certain short
interval. Recalling an idea from paper \cite{na453}, in Section~\ref{sec6} we
introduce a fast Floquet parameter to describe this behavior and detect, in
different situations, open spectral gaps of width $O(\varepsilon)$,
cf. {Figure~\ref{fig3} a)--b), which appear due to splitting of the nodes
marked with $\circ$ and $\mbox{\tiny$\square$}$ in Figure~\ref{fig2}.
This involves the characterization
 of the projections of the shaded rectangles on the ordinate axis in
Figure~\ref{fig3}, which represent the narrow gaps.

\begin{figure}[t]
\begin{center}
\resizebox{!}{5cm} {\includegraphics{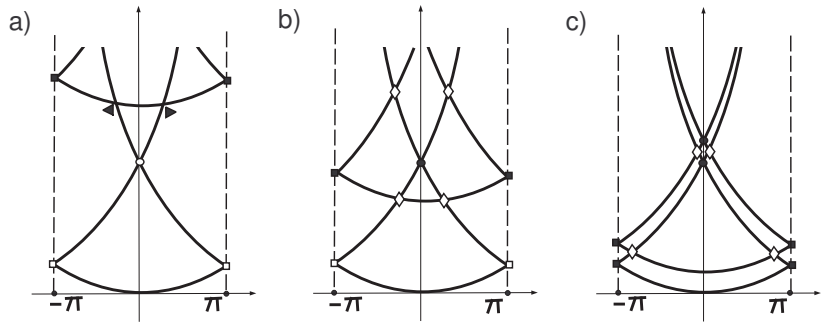}}
\caption{The dispersion curves in the limit problem in the cases
a) $H\in(1/\sqrt{8},1/2)$, b) $H\in(1/2,1)$ but $H\neq 1/\sqrt{3}$ and c) $H\in(1,+\infty).$}
\label{fig2}
\end{center}
\end{figure}

It should be noted that our detailed calculation in Section~\ref{sec5}
demon\-stra\-tes that the first correction term in the eigenvalue
asymptotics is not able to assure the gap opening and we need to discuss higher-order asymptotic terms. In fact,
Section~\ref{sec6} is devoted to deriving the formal asymptotic analysis and its justification in the case where the eigenvalue under consideration is multiple and therefore gives rise to a node  of the dispersion curves in Figure~\ref{fig2} a)--b) for the homogenized problem; in particular, we consider the nodes $(\eta_{\mbox{\small{$\circ$}}},\Lambda_{\mbox{\small{$\circ$}}})=(0,4\pi^2)$ and
$(\eta_{\mbox{\tiny$\square$}}, \Lambda_{\mbox{\tiny$\square$}})= (\pm\pi,\pi^2)$.
Providing the error estimates for the whole range of the
Floquet parameter adds the most complication to the justification
scheme (see Theorems~\ref{Th5.1}, \ref{Theorem_multiple1} and \ref{Theorem_multiple2}). The common procedure for deriving  error estimates in the
homogenization theory does not support our conclusions of opening
spectral gaps (see Section~\ref{sec7}) because the model problem in the periodicity cell
$\varpi^\varepsilon$ depends on the Floquet parameter
$\eta\in[-\pi,\pi]$ and the eigenvalues \eqref{(S2)} of the limit
problem in $\varpi^0$ change their multiplicity at the nodes.
As usual, to provide appropriate error estimates, we use a well-known result on almost eigenvalues and eigenfunctions from the spectral perturbation theory (see \cite{ViLu} and  Lemma~\ref{Lemma_Visik}).
However, we need to construct different approximations for eigenfunctions in the
vicinity of the nodes and at a certain distance from them. This is performed in Sections~\ref{subsec61} and \ref{subsec64}.
As a result, we
find proper small bounds  for asymptotic remainders  that justify our formal computations of the band edges and gap width.
It turns out that these bounds are uniform in $\eta$ but in different regions.

As regards the spectral model problem, the somehow classical convergence of the spectrum towards that of the homogenized problem is in Corollary~\ref{Lemma_convergence}.
We obtain this result as a consequence of a more general convergence result, cf.~Theorem~\ref{corollary_convergence}, which allows a certain perturbation of the Floquet variable. {\color{black} This result is  new in the literature of model problems for waveguides,
and shows    somehow a strong stability of the model problem on the parameter $\eta$.  }
It becomes essential to control the number of eigenvalues below certain constants, cf.~Propositions~\ref{PropoR12} and \ref{PropoR34}.
Theorem~\ref{Th5.1} provides some estimates which establish the closeness of  eigenvalues  depending on $\varepsilon$ and  the first three dispersion curves.    As a consequence, Corollary~\ref{Th5.2}  gives a uniform bound for the convergence rate of the first eigenvalue  at a certain distance from the nodes.
Theorems~\ref{Theorem_multiple1} and \ref{Theorem_multiple2} involve a correcting term and improve convergence rates in a small neighborhood of the  above mentioned nodes $(0,4\pi^2)$ and $(\pm\pi,\pi^2)$.
Combining the results in Sections~\ref{sec5} and \ref{sec6}, in Section~\ref{sec7}, we determine the existence of opening gaps and their width depending on $H$, cf.~Theorems~\ref{Corollary_dos} and \ref{Corollary_uno}.
Finally, in Section~\ref{sec8}, we provide some hints on open problems for other nodes in Figure~\ref{fig2} and other geometrical configurations, cf.~Figures~\ref{fig9_new} and \ref{fig10}.

\section{The model problem in the periodicity cell}\label{sec2}

In this section, we introduce the spectral model problem and its limit problem, both of which depend on the Floquet parameter $\eta\in [-\pi, \pi]$, see Sections~\ref{subsec21} and \ref{subsec22} respectively. In Section~\ref{subsec23}, we  show the spectral convergence as $\ee\to 0$, and its stability under a certain perturbation of the parameter $\eta$. In particular, this proves useful for controlling the eigenvalue number of the model problem  below some bounds.

\subsection{The FBG-transform and the quasi-periodicity conditions}\label{subsec21}

The Floquet--Bloch--Gelfand  transform   ({\em the   FBG-transform} in short),  see
\cite{Gel,ReedSimon,NaPl,Skrig,Kuchbook},
 \begin{equation}\begin{array}{c}\displaystyle
u^\varepsilon(x)\,\,\mapsto\,\,U^\varepsilon(x;\eta)=
\frac{1}{\sqrt{2\pi}}\,\sum\limits_{p\in{\mathbb Z}}
e^{-ip\eta}u^\varepsilon(x_1+p,x_2)
\end{array}\label{(9)}\end{equation}
converts the problem \eqref{(4)}, \eqref{(5)} in the infinite
waveguide $\Pi^\varepsilon$ into a boundary value problem in the
periodicity cell $\varpi^\varepsilon$ defined by \eqref{(10)}, cf.~Figure~\ref{fig1} b).

This problem consists of the differential equation
 \begin{equation}\label{(11)}
-\Delta_x U^\varepsilon(x;\eta)=\Lambda^\varepsilon(\eta)U^\varepsilon(x;\eta),
\,\, x\in\varpi^\varepsilon,
\end{equation}
the quasi-periodicity conditions on the lateral walls
 \begin{equation}\label{(12)}
 U^\varepsilon\Big(\frac{1}{2},x_2;\eta\Big)=e^{i\eta}U^\varepsilon\Big(-\frac{1}{2},x_2;\eta\Big),
\,\, x_2\in(0,H),
\end{equation}
 \begin{equation}\label{(13)}
\frac{\partial U^\varepsilon}{\partial x_1}\Big(\frac{1}{2},x_2;\eta\Big)=e^{i\eta}
\frac{\partial U^\varepsilon}{\partial x_1}\Big(-\frac{1}{2},x_2;\eta\Big),
\,\, x_2\in(0,H),
\end{equation}
and the Neumann condition on the remaining part
 of the boundary of the periodicity cell \eqref{(10)}
 \begin{equation}\label{(14)}
\partial_\nu U^\varepsilon(x{\color{black};\eta})=0,\,\, x\in\{x\in\partial
\varpi^\varepsilon:\,\vert x_1 \vert<1/2\}.
\end{equation}
Here, $\eta\in[-\pi,\pi]$ is the Floquet parameter
 while $\Lambda^\varepsilon(\eta)$ and
$U^\varepsilon(\cdot;\eta)$, respectively, are the new notations for
the eigenvalues and eigenfunctions in the model problem. Notice that
$x\in\Pi^\varepsilon$ on the left of \eqref{(9)} but
$x\in\varpi^\varepsilon$ on the right. Basic properties of the
FBG-transform can be found in the above-cited publications.

\medskip

\begin{figure}
\begin{center}
 \resizebox{!}{5cm} {\includegraphics{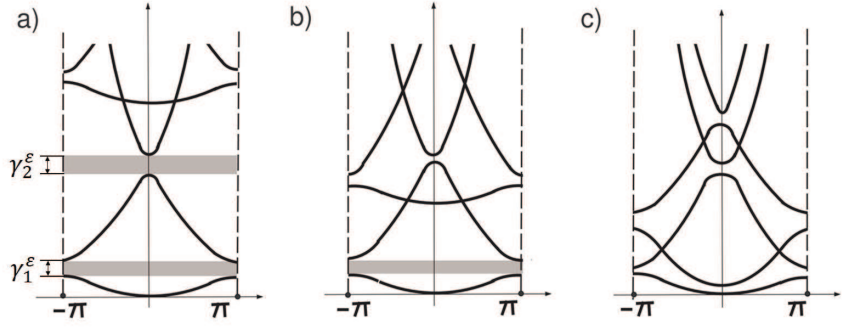}}
\caption{The dispersion curves  of the perturbed problem with the mirror symmetry of the hole $\omega$ in the cases
a) $H\in(1/\sqrt{8},1/2)$, b) $H\in(1/2,1)$ but $H\neq 1/\sqrt{3}$ and c) $H\in(1,+\infty).$
Spectral gaps are the projections of the shaded rectangles on the ordinate $\Lambda$-axis.}
\label{fig3}
\end{center}
\end{figure}

\medskip

The variational statement of the problem \eqref{(11)}--\eqref{(14)}
appeals to the integral identity \cite{Lad}
\begin{equation}\begin{array}{c}
 (\nabla_xU^\varepsilon,\nabla_xV^\varepsilon)_{\varpi^\varepsilon}=\Lambda^\varepsilon
 (U^\varepsilon,V^\varepsilon)_{\varpi^\varepsilon}\quad \forall V^\varepsilon\in H^{1,\eta}_{per}(\varpi^\varepsilon),
\end{array}\label{(16)}\end{equation}
where $H^{1,\eta}_{per}(\varpi^\varepsilon)$ is the Sobolev space of
functions satisfying the stable quasi-periodicity conditions
  {\color{black}  \eqref{(12)} and  \eqref{(13)}}. In view of the compact embedding
$H^1(\varpi^\varepsilon) \subset L^2(\varpi^\varepsilon)$, the
positive self-adjoint operator $A^\varepsilon(\eta)$ in
$L^2(\varpi^\varepsilon)$ associated  with the problem \eqref{(16)}, cf.~\cite[Section~10.2]{BiSo}, has a discrete spectrum constituting the
unbounded monotone sequence of eigenvalues
 \begin{equation}\begin{array}{c}
0\leq\Lambda^\varepsilon_1(\eta)\leq\Lambda^\varepsilon_2(\eta)\leq\dots\leq
\Lambda^\varepsilon_p(\eta)\leq\dots\to+\infty, \quad {\color{black} \mbox{ as } p\to +\infty},
\end{array}\label{(17)}\end{equation}
where their multiplicity is taken into account. Furthermore, the functions
 \begin{equation}\begin{array}{c}
[-\pi,\pi]\ni \eta\,\,\mapsto\,\, \Lambda^\varepsilon_p(\eta),\quad
p\in{\mathbb N},
\end{array}\label{(18)}\end{equation}
are continuous and $2\pi$-periodic (see again any of the above-cited references).
Hence, the sets in \eqref{(8)}
 \begin{equation}\begin{array}{c}
 \beta^\varepsilon_p=\{\Lambda^\varepsilon_p(\eta):\,
 \eta\in[-\pi,\pi]\}\subset \overline{{\mathbb R}_+}
 \end{array}\label{(19)}\end{equation}
are closed, connected, and finite segments. Indeed,  formulas
\eqref{(8)} and \eqref{(19)}  for the spectrum of the operator
$A^\varepsilon(\eta)$ and the boundary-value problem \eqref{(4)},
\eqref{(5)} are well-known in the framework of the
Floquet--Bloch--Gelfand theory.

\subsection{The limit problem and the limit dispersion curves}\label{subsec22}

In Section~\ref{subsec23} we will prove the   relationship
\begin{equation}\begin{array}{c}\displaystyle
\Lambda^\varepsilon_p(\eta)\,\,\to \Lambda^0_p(\eta)\,\,\mbox{\rm
as}\,\,\varepsilon\to +0
\end{array}\label{(27)}\end{equation}
between entries of the sequence \eqref{(17)} and those of the sequence
 \begin{equation}\begin{array}{c}
0\leq\Lambda^0_1(\eta)\leq\Lambda^0_2(\eta)\leq\dots\leq
\Lambda^0_p(\eta)\leq\dots\to+\infty,\quad {\color{black} \mbox{ as } p\to +\infty},
\end{array}\label{(S2)}\end{equation}
which consists of eigenvalues of the limit
problem in the rectangle
$$\varpi^0=\{x:\,\vert x_1 \vert<1/2, \, x_2\in(0,H)\}$$
obtained from the periodicity cell \eqref{(10)} by filling all voids,  cf.~\eqref{(32)}.  Above, the convention of repeated eigenvalues has been adopted, and the limit problem    is also referred to as {\em  homogenized problem}.  It involves the differential
equation
 \begin{equation}\begin{array}{c}
-\Delta{\color{black}{_x}} U^0(x;\eta)=\Lambda^0(\eta)U^0(x;\eta), \,\, x\in\varpi^0,
\end{array}\label{(29)}\end{equation}
the Neumann conditions on the horizontal sides of the rectangle
 \begin{equation}\begin{array}{c}\displaystyle
\frac{\partial U^0}{\partial x_2}(x_1,0{\color{black};\eta})=\frac{\partial
U^0}{\partial x_2}(x_1,H{\color{black};\eta})=0,\,\,
x_1\in\Big(-\frac{1}{2},\frac{1}{2}\Big),
\end{array}\label{(30)}\end{equation}
and the quasi-periodicity conditions on its vertical sides, cf.
\eqref{(12)} and \eqref{(13)},
 \begin{equation}\begin{array}{c}\displaystyle
U^0\Big(\frac{1}{2},x_2;\eta\Big)=
e^{i\eta}U^0\Big(-\frac{1}{2},x_2;\eta\Big),
\\\\\displaystyle
 \frac{\partial U^0}{\partial
x_1}\Big(\frac{1}{2},x_2;\eta\Big)= e^{i\eta}\frac{\partial
U^0}{\partial x_1}\Big(-\frac{1}{2},x_2;\eta\Big), \,\, x_2\in(0,H).
\end{array}\label{(25)}\end{equation}
This problem has the following explicit eigenvalues and eigenfunctions
\begin{equation}\displaystyle
\begin{array}{ll}
\Lambda^0_{jk}(\eta)=(\eta+2\pi j)^2+\pi^2\frac{k^2}{H^2},  \\ &\qquad
 j\in{\mathbb Z},\,\,k\in{\mathbb N}_0={\mathbb N}\cup\{0\}. \\
U^0_{jk}(x;\eta)=e^{i(\eta+2\pi j)x_1}\cos\Big(\pi  k\frac{x_2}{H}\Big),
\end{array}
\label{(31)}
\end{equation}
It should be mentioned that renumeration of the eigenvalues in
\eqref{(31)} is needed to compose the monotone sequence
\eqref{(S2)}.

\medskip
\begin{figure}
\begin{center}
\resizebox{!}{5cm} {\includegraphics{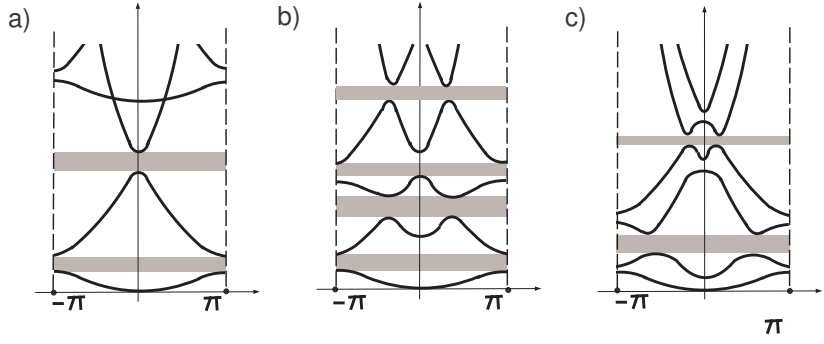}}
\caption{The hypothetical dispersion curves {\color{black}of the perturbed problem without the mirror symmetry of the hole $\omega$ in the cases
a) $H\in(1/\sqrt{8},1/2)$, b) $H\in(1/2,1)$ but $H\neq 1/\sqrt{3}$ and c) $H\in(1,+\infty).$}
Many more spectral gaps are opened than in  {\color{black}Figure~\ref{fig3}}.}
\label{fig4}
\end{center}
\end{figure}

\medskip

Graphs of several
eigenvalues  \eqref{(31)} of the problem  \eqref{(29)}--\eqref{(25)}, that is, {\em the dispersion curves,}
are drawn in {\color{black}Figure~\ref{fig2}} a)--c), respectively, for the following cases:
$$
\mbox{a)}\quad H\in\Big(\frac{1}{\sqrt{8}},\frac{1}{2}\Big) \qquad
\mbox{b)}\quad H\in\Big(\frac{1}{2},1\Big) \qquad
\mbox{c)}\quad H\in(1,+\infty).$$
Figure~\ref{fig_new} also displays dispersion curves of the limit problem in the cases  $H\in(0,1/3)$,  $H=1/\sqrt{8}$ and $H=1/2$, respectively.
Figures~\ref{fig_new} and \ref{fig2} show the great variety of behaviors of the dispersion curves  and, consequently, the complexity to open spectral gaps depending on $H$.

As was mentioned in Section~\ref{subsec12} and depicted in Figure~\ref{fig3}, due to the
perturbation by holes of the periodicity cell, the dispersion curves for problem \eqref{(11)}--\eqref{(14)} may separate at the truss nodes marked with the signs
$\mbox{\tiny$\square$}$ and $\circ$ in Figure~\ref{fig2} (see Section~\ref{sec7}).

\subsection{The convergence results}\label{subsec23}

First, we obtain some estimates for the eigenvalues  and provide an extension for eigenfunctions over the whole $\varpi^0$ necessary to show the convergence.
The constants appearing throughout the section $c_m$ and  $C_m$, are independent of both variables $ \ee$ and $\eta$.

\medskip

Let $\eta\in[-\pi,\pi]$ be fixed
and let $U^\varepsilon_m(\cdot;\eta)\in
H^{1,\eta}_{per}(\varpi^\varepsilon)$ be an eigenfunction of the
problem \eqref{(16)} corresponding to the eigenvalue
$\Lambda^\varepsilon_m(\eta)$. The {\color{black}minimax} principle assures the estimate
\begin{equation}\begin{array}{c}\displaystyle
\Lambda^\varepsilon_m(\eta)\leq c_m\,\,\mbox{\rm for}
\,\,\varepsilon\in(0,\varepsilon_m]
\end{array}\label{(KK1)}\end{equation}
with some positive $\varepsilon_m$ and $c_m$.
{\color{black}
Indeed, we write
$$
\Lambda^\varepsilon_m(\eta)=
\min_{E_m^\varepsilon\subset H^{1,\eta}_{per}(\varpi^\varepsilon)} \max_{V\in E_m^\varepsilon, V\neq 0}
\frac{(\nabla_x V, \nabla_x V)_{\varpi^\varepsilon}}{(V,V)_{\varpi^\varepsilon}},
$$
where the minimum is computed over the set of subspaces $E_m^\varepsilon$ of $H^{1,\eta}_{per}(\varpi^\varepsilon)$
with dimension $m$. To prove \eqref{(KK1)}, we take a particular $E_m^\varepsilon$ that we construct as follows:
we consider the eigenfunctions corresponding to the $m$ first eigenvalues of the mixed
eigenvalue problem in the rectangle $(1/4, 1/2) \times (0,H)$, with Neumann condition
on the part of the boundary $\{1/2\} \times (0,H)$, and Dirichlet condition on the rest of
the boundary. We extend these eigenfunctions by zero for $x \in [0,1/4]\times (0,H)$, and by
symmetry for $x\in [-1/2,0]\times (0,H)$. Finally, multiplying these eigenfunctions by $e^{i\eta x_1}$ gives
$E_m^\varepsilon$ and the right hand side of \eqref{(KK1)} holds.}

\medskip

Let us construct an extension for the eigenfunctions to be uniformly bounded in $H^1(\varpi^0)$.
Being normalized in $L^2(\varpi^\varepsilon)$, on account of \eqref{(KK1)}, the eigenfunctions of \eqref{(16)} satisfy
$$
\|\nabla_x
U^\varepsilon_m;L^2(\varpi^\varepsilon)\|^2=\Lambda^\varepsilon_m(\eta)\|
U^\varepsilon_m;L^2(\varpi^\varepsilon)\|^2=\Lambda^\varepsilon_m(\eta)\leq
c_m\,\,\mbox{\rm for} \,\,\varepsilon\in(0,\varepsilon_m].
$$
Therefore, we can   extend $U^\varepsilon_m(\cdot;\eta)$ over the holes \eqref{(32)}
and obtain a function ${\widehat U}^{\,\varepsilon}_m(\cdot;\eta)\in
H^{1,\eta}_{per}(\varpi^0)$ such that for
$\varepsilon\in(0,\varepsilon_m]$,
\begin{equation}\begin{array}{c}\displaystyle {\widehat
U}^{\,\varepsilon}_m(x;\eta)=U^\varepsilon_m(x;\eta) \,\, \mbox{  for }
 x\in\varpi^\varepsilon \quad   \mbox{ and } \vspace{0.2cm}\\
\|{\widehat U}^{\,\varepsilon}_m(\cdot;\eta); H^1(\varpi^0)\|\leq C
 \|U^\varepsilon_m;H^1(\varpi^\varepsilon)\|\leq C_m,
\end{array}\label{(KK3)}\end{equation}
 see, for example, Section I.4.2 in \cite{Oleinikbook} for {\color{black} such} an extension. In addition, from \eqref{(KK3)} and the estimate
$$
\|V;L^2(\varpi^0 \cap \{\vert x_1 \vert<\varepsilon\})\|^2 \leq C\varepsilon \|V;H^1(\varpi^0 )\|^2\quad \forall V\in H^1(\varpi^0)
$$
(see, e.g., Lemma~2.4 in \cite{Marchenkobook}), they satisfy
\begin{equation}\begin{array}{c}\displaystyle
\|{\widehat U}^{\,\varepsilon}_m(\cdot;\eta);
L^2(\varpi^0\setminus\varpi^\varepsilon)\|^2\leq c\varepsilon
 \|U^\varepsilon_m;H^1(\varpi^\varepsilon)\|^2\leq C_m\varepsilon.
\end{array}\label{(KK4)}\end{equation}

\medskip

Moreover, the following results state the spectral convergence for problem \eqref{(16)}, {\color{black} as $\ee \to 0$. As a matter of fact, Theorem \ref{corollary_convergence}  also shows the stability of the limit of the spectrum of the perturbation problem  \eqref{(11)}--\eqref{(14)}  under any perturbation of the Floquet-parameter $\eta$.}

\begin{Theorem}\label{corollary_convergence}
For   each sequence  $\{(\varepsilon_k, \eta_k)\}_{k=1}^\infty$ such that $\ee_k\to 0$ and $\eta_k\to \widehat \eta \in [-\pi,\pi]$ as $k\to +\infty$, the eigenvalues $\Lambda_m^{\varepsilon_k}(\eta_k)$ of problem \eqref{(11)}--\eqref{(14)} when $(\ee,\eta)\equiv (\ee_k,\eta_k)$  converge, as $ k\to +\infty,$ towards the eigenvalues of problem \eqref{(29)}--\eqref{(25)} for $\eta\equiv \widehat \eta$ and there is conservation of the multiplicity. Namely, for each $m=1,2,\cdots$, the convergence
$$
\Lambda^{\ee_k}_{m}(\eta_k)\to \Lambda^{0}_m (\widehat\eta),\quad \mbox{ as }   k\to + \infty,
$$
holds, where $\Lambda^0_m(\widehat \eta)$ is the m-th eigenvalue in the sequence
$$
0\leq\Lambda^0_1(\widehat \eta)\leq\Lambda^0_2(\widehat \eta)\leq\dots\leq
\Lambda^0_m(\widehat \eta)\leq\dots\to+\infty,\quad \mbox{ as } m\to +\infty,
$$
of eigenvalues of \eqref{(29)}--\eqref{(25)} for $\eta\equiv \widehat \eta$, which are counted according to their multiplicities.   In addition,   we can extract a subsequence, still denoted by $\varepsilon_k$, such that the extension
$\{{\widehat U}_m^{\,\varepsilon_k}\}_{m=1}^\infty$ converge in $L^2(\varpi^0)$, as $\ee_k\to 0$, towards the eigenfunctions of \eqref{(29)}--\eqref{(25)} for $\eta\equiv \widehat \eta$, which form an orthonormal basis of $L^2(\varpi^0)$.
\end{Theorem}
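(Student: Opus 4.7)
The plan is to reduce the $\eta$-dependent quasi-periodic formulation to a fixed periodic one by the substitution $\widetilde U^{\,\varepsilon_k}_m(x):=e^{-i\eta_k x_1}\widehat U^{\,\varepsilon_k}_m(x;\eta_k)$, which places each $\widetilde U^{\,\varepsilon_k}_m$ in the $\eta$-independent Sobolev space $H^{1}_{per}(\varpi^0)$ of functions $1$-periodic in $x_1$. The bounds \eqref{(KK1)}--\eqref{(KK4)} translate verbatim into uniform $H^{1}_{per}(\varpi^0)$-bounds for $\widetilde U^{\,\varepsilon_k}_m$, so Rellich's theorem together with a diagonal extraction produces a subsequence, still denoted by $\varepsilon_k$, along which $\widetilde U^{\,\varepsilon_k}_m\rightharpoonup\widetilde U^{0}_m$ weakly in $H^{1}_{per}(\varpi^0)$ and strongly in $L^2(\varpi^0)$ for every $m\in\mathbb{N}$, and simultaneously $\Lambda^{\varepsilon_k}_m(\eta_k)\to\mu_m$ for some $\mu_m\geq 0$. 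Setting $\widehat U^{\,0}_m(x;\widehat\eta):=e^{i\widehat\eta x_1}\widetilde U^{0}_m(x)$ produces the candidate limit family, a priori only in $H^{1,\widehat\eta}_{per}(\varpi^0)$.

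Next, I pass to the limit in the integral identity \eqref{(16)} using test functions $V^{\varepsilon_k}=e^{i\eta_k x_1}\Psi$ with $\Psi\in C^\infty(\overline{\varpi^0})$ arbitrary and $1$-periodic in $x_1$. After the substitution, the identity reads
\begin{equation*}
\int_{\varpi^{\varepsilon_k}}\bigl((\nabla+i\eta_k e_1)\widetilde U^{\,\varepsilon_k}_m\bigr)\cdot\overline{(\nabla+i\eta_k e_1)\Psi}\,dx
=\Lambda^{\varepsilon_k}_m(\eta_k)\int_{\varpi^{\varepsilon_k}}\widetilde U^{\,\varepsilon_k}_m\,\overline\Psi\,dx.
\end{equation*}
Since the total area of the holes is $N\varepsilon_k^2|\omega|=\varepsilon_k|\omega|$, Cauchy--Schwarz combined with the uniform $H^1$-bound on the extension $\widehat{\widetilde U}^{\,\varepsilon_k}_m$ and with \eqref{(KK4)} shows that both integrals differ from the corresponding ones over $\varpi^0$ by terms bounded by $C_m\varepsilon_k^{1/2}\|\Psi\|_{H^1(\varpi^0)}$. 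Combining weak $H^1$ and strong $L^2$ convergence of the extensions with the continuous dependence on $\eta_k$ of the $i\eta_k e_1$ term, and with $\Lambda^{\varepsilon_k}_m(\eta_k)\to\mu_m$, one obtains
\begin{equation*}
\int_{\varpi^0}\bigl((\nabla+i\widehat\eta\, e_1)\widetilde U^{0}_m\bigr)\cdot\overline{(\nabla+i\widehat\eta\, e_1)\Psi}\,dx
=\mu_m\int_{\varpi^0}\widetilde U^{0}_m\,\overline\Psi\,dx,
\end{equation*}
which is exactly the variational form of \eqref{(29)}--\eqref{(25)} at $\eta=\widehat\eta$. Hence each $\widehat U^{\,0}_m$ is either $0$ or an eigenfunction of the limit problem with eigenvalue $\mu_m$.

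To identify $\mu_m=\Lambda^{0}_m(\widehat\eta)$ together with preservation of multiplicities, I establish two matching bounds. The $\limsup$-inequality $\mu_m\leq\Lambda^{0}_m(\widehat\eta)$ follows from the min-max principle by inserting the explicit limit eigenfunctions \eqref{(31)}, adjusted by the factor $e^{i(\eta_k-\widehat\eta)x_1}$ so as to belong to $H^{1,\eta_k}_{per}(\varpi^0)$, into the Rayleigh quotient on $\varpi^{\varepsilon_k}$; by smoothness of \eqref{(31)} and the $O(\varepsilon_k)$ area bound of the holes, these Rayleigh quotients differ from those on $\varpi^0$ by $O(\varepsilon_k)$. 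Conversely, the $L^2(\varpi^{\varepsilon_k})$-orthonormality of $\{U^{\varepsilon_k}_m\}$, together with the trace-type estimate preceding \eqref{(KK4)}, survives in the strong $L^2$-limit as orthonormality of $\{\widetilde U^{0}_m\}_{m=1}^M$ in $L^2(\varpi^0)$ for every $M$; thus no limit vanishes and the first $m$ functions span an $m$-dimensional eigenspace of the limit problem, forcing $\mu_m\geq\Lambda^{0}_m(\widehat\eta)$ by the min-max principle in $H^{1,\widehat\eta}_{per}(\varpi^0)$. Both bounds give $\Lambda^{\varepsilon_k}_m(\eta_k)\to\Lambda^{0}_m(\widehat\eta)$ for every $m$; since the limit is uniquely determined, not just a subsequence but the full original sequence converges, and the extracted subsequence of $\widehat U^{\,\varepsilon_k}_m$ converges in $L^2(\varpi^0)$ to an orthonormal basis of eigenfunctions of the limit problem.

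The principal obstacle is the simultaneous perturbation of the Sobolev space, whose quasi-periodicity factor $e^{i\eta_k}$ varies with $k$, and of the domain, whose perforations shrink at rate $\varepsilon_k$. The substitution to the $\eta$-independent periodic space is the key technical device that decouples these two effects: it freezes the functional setting while producing a first-order $i\eta_k e_1$ term in the bilinear form whose passage to the limit is continuous in $\eta_k$. Apart from that, the scheme is a standard combination of the perforation extension \eqref{(KK3)}--\eqref{(KK4)} (supplying $H^1$-compactness), Rellich--Kondrachov, and the min-max principle, in the spirit of the Neumann homogenization framework of \cite{Oleinikbook}; no corrector needs to be constructed because the limit operator is simply the Laplacian on the filled rectangle $\varpi^0$.
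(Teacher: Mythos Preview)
Your proof is correct and shares the paper's central device: the gauge substitution $e^{-i\eta_k x_1}$ that transports the problem to the fixed periodic space $H^{1}_{per}(\varpi^0)$ and turns the Laplacian into $(\nabla+i\eta_k e_1)\cdot(\nabla+i\eta_k e_1)$, so that the limit passage in $\eta_k$ becomes a continuous perturbation of a bilinear form while the domain perturbation is handled by the extension bounds \eqref{(KK3)}--\eqref{(KK4)}. The compactness and weak-limit steps are essentially identical to the paper's.

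The genuine difference is the identification $\mu_m=\Lambda^{0}_m(\widehat\eta)$. The paper shows that the limit family $\{\widehat U^{0}_m\}$ is orthonormal and then invokes a classical completeness/contradiction argument (citing \cite{Oleinikbook,Attouch}) to conclude that no limit eigenvalue is skipped. You instead sandwich $\mu_m$ between two min--max bounds: the upper bound from inserting the explicit limit eigenfunctions \eqref{(31)} (shifted by $e^{i(\eta_k-\widehat\eta)x_1}$) into the perturbed Rayleigh quotient, and the lower bound from the $m$-dimensional orthonormal limit span. Your route is more self-contained and avoids the external completeness reference; the paper's route, once that reference is granted, yields in one stroke that the limit eigenfunctions form a basis, which you only obtain implicitly after identifying all eigenvalues. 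Both are standard and of comparable length.
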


\begin{proof}
For each $\ee_k$ and $\eta_k$, we write the integral equation satisfied by the eigenvalue $\Lambda_m^{\varepsilon_k}(\eta_k)$  and   corresponding eigenfunction $ {  U}_m^{\,\varepsilon_k} (\cdot,\eta_k)$:
\begin{equation}\begin{array}{c}
(\nabla_x U_m^{\varepsilon_k},\nabla_x V^{\varepsilon_k})_{\varpi^{\varepsilon_k}}=\Lambda_m^{\varepsilon_k}(\eta_k)
(U_m^{\varepsilon_k},V^{\varepsilon_k})_{\varpi^{\varepsilon_k}}\quad \forall V^{\varepsilon_k}\in H^{1,\eta_k}_{per}(\varpi^{\varepsilon_k}),
\end{array}\label{(16r)}\end{equation}

Since the constants appearing \eqref{(KK3)} and \eqref{(KK4)}  are independent of $\eta\in [-\pi,\pi]$ and $\ee$, the estimates hold  $\ee$ and $\eta$ ranging in sequences of  $\{\ee_k\}_k$ and $\{\eta_k\}_k$, in the statement of the theorem.
Thus,  we use an extension  of $U^{\varepsilon_k}_m(\cdot;\eta)$ over the holes \eqref{(32)}
 denoted by ${\widehat U}^{\,\varepsilon_k}_m(\cdot;\eta)\in
H^{1,\eta_k}_{per}(\varpi^0)$,   which  satisfies
\begin{equation}\begin{array}{c}\displaystyle
\|{\widehat U}^{\,\varepsilon_k}_m(\cdot;\eta_k); H^1(\varpi^0)  \|\leq C_m,  \mbox{ \quad and \quad} \vspace{0.2cm}\\
\displaystyle \|{\widehat U}^{\,\varepsilon_k}_m(\cdot;\eta_k);
L^2(\varpi^0\setminus\varpi^{\varepsilon_k})\|^2\leq   C_m\varepsilon_k \|{  U}^{\,\varepsilon_k}_m(\cdot;\eta_k); H^1(\varpi^{\ee_k})\|^2 \leq C_m \ee_k;
\end{array}\label{(KK34)}\end{equation}
for sufficiently small $\ee_k$, with a constant $C_m$ independent of $\ee_k$ and $\eta_k$.

In view of \eqref{(KK1)} and \eqref{(KK34)},   for each fixed $m\geq 1$ and for each  subsequence of $ k $,  we can extract a subsequence, still denoted by   $k$,
such that
\begin{equation}\label{(KK5r)}
\begin{array}{c}\displaystyle
\Lambda^{\varepsilon_k}_m(\eta_k)\rightarrow
\mbox{\boldmath$ \widehat \Lambda$}^0_m, \quad \mbox{as  } k\to  +\infty, \vspace{0.2cm}\\
\!\!{\widehat U}^{\,\varepsilon_k}_m(\cdot;\eta_k)\rightarrow {\bf \widehat
U}^0_m \,\,\mbox{\rm weakly\,\,in}\,\,H^1(\varpi^0)\,\,
\mbox{\rm and\,\,strongly\,\,in}\,\, L^2(\varpi^0),\quad   \mbox{as  } k\to + \infty,
\end{array}\end{equation}
for some real number   $\mathbf{\widehat \Lambda}^0_m \geq 0$  and  some function    ${\bf\widehat
U}^0_m\in H^1(\varpi^0)$ which we determine below depending on $\widehat \eta$.

We take a test function $V\in C^\infty(\overline{\varpi^0})$
verifying the    periodicity condition at the lateral sides of $\varpi^0$ and consider $V^k=Ve^{i \eta_k x_1}$ which satisfies the quasi-periodicity condition in \eqref{(25)} with $\eta\equiv \eta_k$. For $V^{\ee_k}\equiv V^k$, we
rewrite the integral identity \eqref{(16r)} in the form
\begin{equation}
(\nabla_x {\widehat U}^{\,\varepsilon_k}_m,
\nabla_xV^k)_{\varpi^0}-\Lambda^{\varepsilon_k}_m(\eta_k)({\widehat
U}^{\,\varepsilon_k}_m,V^k)_{\varpi^0} =
(\nabla_x {\widehat U}^{\,\varepsilon_k}_m,\nabla_x V^k)_{\varpi^0\setminus\varpi^{\varepsilon_k}}-\Lambda^{\varepsilon_k}_m(\eta)({\widehat
U}^{\,\varepsilon_k}_m,V^k)_{\varpi^0\setminus\varpi^{\varepsilon_k}}.
\label{(KK6r)}\end{equation}
According to \eqref{(KK34)} and \eqref{(KK1)}, the modulo of the
right-hand side of \eqref{(KK6r)} does not exceed
\begin{equation}\label{numc}\begin{array}{c}\displaystyle
(1+c_m \ee_k)\|{\widehat
U}^{\,\varepsilon_k}_m;H^1(\varpi^0)\|\,\|V^k;H^1(\varpi^0\setminus\varpi^{\varepsilon_k})\|\leq
c_m(V^k)
\mbox{\rm mes}_2(\varpi^0\setminus\varpi^{\varepsilon_k})\\\\
\leq
c_m \max_{x\in \overline{\varpi^0}} (\vert V^k(x)\vert, \vert  \partial_{x_1} V^k(x)\vert, \vert \partial_{x_2}V^k(x)\vert) \varepsilon_k \leq C_m(V)(1+\eta_k) \varepsilon_k. \end{array}\end{equation}
Thus, the right hand side of \eqref{(KK6r)} converges towards zero as $k\to +\infty$. Let us analyze the left hand side in further detail.

In order to do this, let us consider the well-known change $\widehat U_m^{\ee_k} (\cdot; \eta_k )=V_m^{\ee_k} (\cdot; \eta_k) e^{ i \eta_k x_1}$
which converts the Laplacian into the differential operator
$$-\Big(\frac{\partial}{\partial x_1} + i \eta  \Big)  \Big(\frac{\partial}{\partial x_1} + i\eta  \Big)- \frac{\partial^2}{\partial x_2^2},
$$
while the $\eta_k$ quasi-periodicity condition for $ \widehat U_m^{\ee_k} (\cdot; \eta_k) $ becomes a periodicity condition for  $V_m^{\ee_k} (\cdot; \eta_k)$.
Consequently,  $V_m^{\ee_k} (\cdot; \eta_k)\in H^{1 }_{per} (\varpi^0 )$ and since  $\eta_k\to \widehat \eta $ as $k\to +\infty$ (equivalently, $\ee_k\to 0$), we also have the  bound  for $V_m^{\ee_k}$  in $    H^{1 }_{per} ( \varpi^0 )$   which holds uniformly in $\eta_k$ and $\ee_k$, and  a  convergence of $V_m^{\ee_k}$  (by subsequences, still denoted by $k$)    towards a function  $V^0_m\in H^{1 }_{per} (\varpi^0 ) $ holds in  the weak topology of $   H^{1 } (\varpi^0  )$. Let us show that
\begin{equation}\label{pereta} V_m^0={\bf \widehat   U}_m^0   e^{-i \widehat \eta  x_1}.\end{equation}
To do this, it suffices to show
$$\Vert \widehat U_m^{\ee_k} (\cdot; \eta_k) e^{-i \eta_k x_1} -{\bf \widehat U}_m^0  e^{-i \widehat \eta  x_1};  L^2(\varpi^0)\Vert \to 0\quad  \mbox{ as } k\to +\infty,$$
and we check this by  considering
\begin{equation*}
\begin{split}
&\Vert \widehat U_m^{\ee_k} (\cdot; \eta_k) e^{-i \eta_k x_1} - {\bf \widehat  U}_m^0  e^{-i \widehat \eta  x_1};  L^2(\varpi^0)\Vert\\
&\quad  \leq
\Vert \big( \widehat U_m^{\ee_k}(\cdot; \eta_k)    - {\bf \widehat U}_m^0 \big) e^{-i  \eta_k  x_1};  L^2(\varpi^0)\Vert
+ \Vert {\bf \widehat U}_m^0   \big (e^{-i \eta_k x_1} -   e^{-i \widehat \eta  x_1}\big); L^2(\varpi^0)\Vert,
\end{split}
\end{equation*}
the convergence \eqref{(KK5r)}, the smoothness of the exponential function and the convergence of $\eta_k$.

Introducing  the change $V^k=Ve^{i \eta_k x_1}$  in \eqref{(KK6r)}, we write
\begin{equation}\begin{array}{l}\displaystyle
(\nabla_x  {  \widehat U}^{\,\varepsilon_k}_m   ,
\nabla_x(V e^{i   \eta_k  x_1}))_{\varpi^0}-\Lambda^{\varepsilon_k}_m(\eta_k) ( {  \widehat U}^{\,\varepsilon_k}_m ,V e^{ i   \eta_k  x_1})_{\varpi^0} \vspace{0.2cm}\\
=(\nabla_x {  \widehat U}^{\,\varepsilon_k}_m   ,
\nabla_x(V e^{i \widehat  \eta  x_1}))_{\varpi^0} + (\nabla_x  {  \widehat U}^{\,\varepsilon_k}_m,
\nabla_x(V  (e^{i   \eta_k  x_1} -e^{i \widehat  \eta  x_1})  ))_{\varpi^0}\vspace{0.2cm}\\
\quad -\Lambda^{\varepsilon_k}_m(\eta_k)({
\widehat U}^{\,\varepsilon_k}_m ,V e^{ i  \widehat \eta   x_1})_{\varpi^0} -  \Lambda^{\varepsilon_k}_m(\eta_k) ( {  \widehat U}^{\,\varepsilon_k}_m,V (e^{i   \eta_k  x_1} -e^{i \widehat  \eta  x_1})   )_{\varpi^0} \,.
\end{array}\label{(KK6rb)}\end{equation}

Let us show
\begin{equation}\label{vcon} V   e^{i   \eta_k  x_1}  \to   V  e^{i \widehat  \eta  x_1}  \mbox{ in } H^1(\varpi^0) \quad  \mbox{ as } k\to + \infty \end{equation}
and therefore,  from \eqref{(KK5r)}, also the convergence
$$
(\nabla_x  {  \widehat U}^{\,\varepsilon_k}_m,
\nabla_x(V  (e^{i   \eta_k  x_1} -e^{i \widehat  \eta  x_1})  ))_{\varpi^0} -  \Lambda^{\varepsilon_k}_m(\eta_k)( {  \widehat U}^{\,\varepsilon_k}_m,V (e^{i   \eta_k  x_1} -e^{i \widehat  \eta  x_1})  ))_{\varpi^0}  \lik 0, $$ 
holds.
Indeed, on account of the smoothness of $V$, we have
\begin{equation*}
\begin{split}
&\Vert V  (e^{i   \eta_k  x_1}  - e^{i \widehat  \eta  x_1} ); H^1(\varpi^0) \Vert^2 \\
&\,\, \leq C(V)\Big(  \Vert     e^{i   \eta_k  x_1}   -e^{i \widehat  \eta  x_1} ; L^2(\varpi^0) \Vert^2
+\Vert    \eta_k e^{i   \eta_k  x_1}-\widehat \eta   e^{i \widehat  \eta  x_1} ; L^2(\varpi^0) \Vert^2\Big)\lik 0,
\end{split}
\end{equation*}
for a certain  positive constant $C(V)$, and this shows \eqref{vcon}.

Then, taking limits in \eqref{(KK6rb)} as $k\to \infty$,  on account of \eqref{(KK5r)}, \eqref{numc}, \eqref{vcon}  and  \eqref{pereta},  we obtain
the integral
identity
$$
(\nabla_x {\bf \widehat U}^0_m,\nabla_x (Ve^{i \widehat  \eta  x_1} ))_{\varpi^0}-\mbox{\boldmath$\widehat \Lambda$}^0_m ({\bf
\widehat U}^0_m,Ve^{i \widehat  \eta  x_1})_{\varpi^0}= 0 \quad \forall  V\in C^\infty(\overline{\varpi^0})\cap H^{1}_{per}(\varpi^0),
$$
while, by a completion argument, we can write
$$
  (\nabla_x {\bf \widehat U}^0_m,\nabla_x  (Ve^{i \widehat  \eta  x_1}))_{\varpi^0}-  \mbox{\boldmath$\widehat \Lambda$}^0_m({\bf
\widehat U}^0_m, Ve^{i \widehat  \eta  x_1} )_{\varpi^0}= 0
  \quad \forall   V\in H^{1}_{per}(\varpi^0),
$$
or equivalently,
\begin{equation}\label{(16limitrb)}
  (\nabla_x {\bf \widehat U}^0_m,\nabla_x  V)_{\varpi^0}-\mbox{\boldmath$\widehat \Lambda$}^0_m({\bf
\widehat U}^0_m, V  )_{\varpi^0}= 0
  \quad \forall   V\in H^{1,\widehat \eta}_{per}(\varpi^0).
\end{equation}
On account of \eqref{pereta}, also ${\bf \widehat U}_m^0 \in   H^{1,\widehat \eta}_{per}(\varpi^0)$ and, consequently, \eqref{(16limitrb)} is nothing but the weak
formulation of  \eqref{(29)}--\eqref{(25)} for $\eta\equiv\widehat  \eta $.

Furthermore,
$$
1=\|U^{\varepsilon_k}_m (\cdot ,\eta_k);L^2(\varpi^{\varepsilon_k})\|^2=\|{\widehat
U}^{\,\varepsilon_k}_m(\cdot ,\eta_k);L^2(\varpi^0)\|^2-\|{\widehat
U}^{\,\varepsilon_k}_m(\cdot ,\eta_k);L^2(\varpi^0\setminus\varpi^{\varepsilon_k})\|^2
$$
and taking limits as $k\to +\infty$, on account of  \eqref{(KK34)} and \eqref{(KK5r)}, gives
$$\|{\bf \widehat U}^0_m;L^2(\varpi^0)\|^2=1.$$
This, together with \eqref{(16limitrb)}, identifies $(\mbox{\boldmath$ \widehat \Lambda$}^0_m , {\bf \widehat U}_m^0) $ with an eigenpair of \eqref{(29)}--\eqref{(25)} when $\eta\equiv\widehat  \eta $.

Therefore, we conclude that $ {\bf \widehat \Lambda}_m^0$ is an eigenvalue with the corresponding eigenfunction  ${\bf \widehat U}_m^0$ of the limit problem \eqref{(29)}--\eqref{(25)} when $\eta\equiv\widehat  \eta $, and we get a dependence of ${  \bf \widehat \Lambda}^0_m$ on $\widehat \eta$, so we write ${ \bf \widehat \Lambda}^0_m:={ \bf \widehat \Lambda}^0_m(\widehat \eta)$.

Note that the extracted subsequences and limits may depend on $m$.  However,  using a diagonalization argument, for each   sequence of  $k$, we can extract another subsequence, still denoted by $k$,  but independent of $m$,  such that \eqref{(KK5r)}  holds $\forall m\in {\mathbb N}$.
Then, by the construction, we have obtained an increasing sequence
\begin{equation}\label{(S2r)}
0\leq {\bf \widehat \Lambda}^0_1(\widehat \eta)\leq{\bf \widehat \Lambda}^0_2(\widehat \eta)\leq\dots\leq
{ \bf \widehat \Lambda}^0_m(\widehat \eta)\leq\dots.
\end{equation}
In what follows we prove that the sequence $\{{\bf  \widehat \Lambda}^0_m(\widehat \eta)\}_{m=1}^\infty$  converges towards infinity while the whole sequence coincides with that in \eqref{(S2)} when $\eta\equiv \widehat \eta$.

From   the orthogonality of  $U^{\varepsilon_k}_m(\cdot; \eta_k)$ in $L^2(\varpi^{\varepsilon_k})$, we write
 $$ \displaystyle
({\widehat U}^{\,\varepsilon_k}_m(\cdot; \eta_k),{\widehat U}^{\,\varepsilon_k}_n(\cdot; \eta_k))_{\varpi^0}=
({\widehat U}^{\,\varepsilon_k}_m(\cdot; \eta_k),{\widehat U}^{\,\varepsilon_k}_n(\cdot; \eta_k))_{\varpi^0\setminus\varpi^{\varepsilon_k}} \quad \forall m,n \in    {\mathbb N}, \,\, m\not=n
$$
and use \eqref{(KK34)} to get the orthogonality of the eigenfunctions $\{{\bf \widehat U}^0_m\}_{m=1}^\infty$ in $L^2(\varpi^0)$. This shows that the sequence in \eqref{(S2r)} converges towards infinity as  $m\to \infty$.

In order to  show that with the above limits \eqref{(S2r)} we reach all the eigenvalues in the entry \eqref{(S2)} when $\eta\equiv\widehat\eta$, namely,  that  $ \Lambda^0_m(\widehat\eta)=\mathbf{\widehat \Lambda}^0_m(\widehat \eta)$, it suffices to show that the set  $\{{\bf\widehat  U}^0_m\}_{m=1}^\infty$ forms a basis of $L^2(\varpi^0)$. Indeed, this is a  classical   process of contradiction  (see, for instance, Section III.1 of  \cite{Oleinikbook}
and Section III.9.1 of \cite{Attouch}).
In this way, we have proved that \eqref{(27)} holds for any $p\in  \mathbb{N}$,  where  $\{{\bf \widehat \Lambda}^0_p(\widehat \eta)\}_{p=1}^\infty$ are the set of eigenvalues of \eqref{(29)}--\eqref{(25)} when $\eta\equiv \widehat \eta$  and the eigenfunctions  $\{{\bf \widehat  U}^0_p \}_{p=1}^\infty$ form an orthonormal  basis of $L^2(\varpi^0)$. Consequently, the sequence \eqref{(S2r)}  coincides with \eqref{(S2)}, and the theorem is proved.
\end{proof}

\begin{Corollary}\label{Lemma_convergence}
For any $\eta\in[-\pi,\pi]$, the eigenvalues $\Lambda_m^\varepsilon(\eta)$ of problem \eqref{(11)}--\eqref{(14)} in the sequence \eqref{(17)} converge, as $\varepsilon\to +0,$ towards the eigenvalues of problem \eqref{(29)}--\eqref{(25)} in the sequence \eqref{(S2)} and there is conservation of the multiplicity. In addition, for each sequence, we can extract a subsequence, still denoted by $\varepsilon$, such that the extensions
 $\{{\widehat U}_m^{\,\varepsilon}\}_{m=1}^\infty$ converge in $L^2(\varpi^0)$, as $\ee\to 0$, towards the eigenfunctions of \eqref{(29)}--\eqref{(25)}, which form an orthonormal basis of $L^2(\varpi^0)$.
Also, for each eigenfunction $U_p^0$ of \eqref{(29)}--\eqref{(25)} associated with the eigenvalue $\Lambda_p^0(\eta)$ of multiplicity $n_p$, $\Lambda_p^0(\eta)=\Lambda_{p+1}^0(\eta)=\cdots=\Lambda_{p+n_p-1}^0(\eta)$ in \eqref{(S2)}, there is a linear combination $U^\varepsilon$ of eigenfunctions corresponding to the eigenvalues $\Lambda_p^\varepsilon(\eta), \, \Lambda_{p+1}^\varepsilon(\eta), \, \dots, \, \Lambda_{p+n_p-1}^\varepsilon(\eta)$ in \eqref{(17)}, that converges towards $U_p^0$ in $L^2(\omega).$
\end{Corollary}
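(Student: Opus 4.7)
The plan is to obtain the corollary as an essentially immediate specialization of Theorem~\ref{corollary_convergence}, with a short additional argument to extract the linear combination of perturbed eigenfunctions that converges to a prescribed limit eigenfunction.

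First, I would fix $\eta\in[-\pi,\pi]$, pick any sequence $\varepsilon_k\to 0^+$, and apply Theorem~\ref{corollary_convergence} to the constant sequence $\eta_k\equiv\eta$ (so $\widehat\eta=\eta$). This instantly yields $\Lambda_m^{\varepsilon_k}(\eta)\to\Lambda_m^0(\eta)$ for every $m\in\mathbb{N}$, with multiplicities respected, since in the enumeration \eqref{(17)} the eigenvalues are counted with multiplicity and the $m$-th term converges to the $m$-th term of \eqref{(S2)}. Because the limit does not depend on the chosen subsequence, the convergence holds for $\varepsilon\to 0^+$ (not merely along subsequences), which proves the first assertion.

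Next, for the eigenfunction statement, Theorem~\ref{corollary_convergence} further supplies, up to extracting a subsequence of $\{\varepsilon_k\}$, limits $\widehat U_m^{\,\varepsilon_k}\to \mathbf{U}_m^0$ in $L^2(\varpi^0)$, where $\{\mathbf{U}_m^0\}_{m=1}^\infty$ is an orthonormal family of eigenfunctions of \eqref{(29)}--\eqref{(25)} with $\mathbf{U}_m^0$ associated to $\Lambda_m^0(\eta)$. The orthonormality plus completeness in $L^2(\varpi^0)$ follow from the estimates \eqref{(KK34)} (to pass the $L^2(\varpi^{\varepsilon_k})$-orthonormality to the extensions up to an $O(\varepsilon_k)$ error) together with the basis argument already invoked in the proof of Theorem~\ref{corollary_convergence}. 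This gives the second assertion of the corollary.

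Finally, for the linear combination: suppose $\Lambda_p^0(\eta)$ has multiplicity $n_p$, so $\Lambda_p^0(\eta)=\cdots=\Lambda_{p+n_p-1}^0(\eta)$, and let $U_p^0$ be a prescribed eigenfunction of this eigenvalue. Along the subsequence selected above, the limits $\mathbf{U}_p^0,\ldots,\mathbf{U}_{p+n_p-1}^0$ form an orthonormal basis of the eigenspace $E_p^0\subset L^2(\varpi^0)$ associated to $\Lambda_p^0(\eta)$, because conservation of multiplicity identifies exactly $n_p$ orthonormal limit eigenfunctions lying in $E_p^0$, a space of dimension $n_p$. Hence there exist coefficients $c_0,\dots,c_{n_p-1}\in\mathbb{C}$ with
\begin{equation*}
U_p^0=\sum_{j=0}^{n_p-1}c_j\,\mathbf{U}_{p+j}^0,\qquad c_j=(U_p^0,\mathbf{U}_{p+j}^0)_{\varpi^0}.
\end{equation*}
Defining
\begin{equation*}
U^\varepsilon=\sum_{j=0}^{n_p-1}c_j\,U_{p+j}^\varepsilon\in H^{1,\eta}_{per}(\varpi^\varepsilon),
\end{equation*}
and using linearity together with the $L^2(\varpi^0)$ convergence of the extensions $\widehat U_{p+j}^{\,\varepsilon}\to\mathbf{U}_{p+j}^0$, we obtain $\widehat U^{\,\varepsilon}\to U_p^0$ in $L^2(\varpi^0)$; restricting to $\varpi^\varepsilon$ (where $\widehat U^{\,\varepsilon}=U^\varepsilon$) and noting that $\|U^\varepsilon-U_p^0;L^2(\varpi^\varepsilon\cap\varpi^0)\|\to 0$ while $\mes_2(\varpi^0\setminus\varpi^\varepsilon)\to 0$, the required $L^2$-convergence of the linear combination follows.

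The only mildly delicate point is the last one: a priori only subsequential convergence of the eigenfunctions is guaranteed, so the coefficients $c_j$ depend on the extracted subsequence. This is harmless for the statement of the corollary, which only asks for existence of such $U^\varepsilon$; if one wanted convergence along the full sequence $\varepsilon\to 0^+$ for a fixed $U_p^0$, one would further need to update the coefficients $c_j=c_j(\varepsilon)$ according to the current subsequence, which does not affect the $L^2$-limit.
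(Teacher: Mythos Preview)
Your proof is correct and follows essentially the same approach as the paper: both specialize Theorem~\ref{corollary_convergence} to the constant sequence $\eta_k\equiv\eta$ for the first two assertions, and for the third the paper simply cites the technique in Theorem~III.1.7 of \cite{Oleinikbook}, which is exactly the orthonormal-basis expansion argument you wrote out explicitly. Your added remark about the subsequence-dependence of the coefficients $c_j$ is a fair clarification of what the statement actually claims.
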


\begin{proof}
The first two assertions hold from Theorem~\ref{corollary_convergence} taking $\eta_k\equiv\eta$ fixed for all $k$ with minor modifications.
Moreover, the last result is obtained using the technique in Theorem~III.1.7 in \cite{Oleinikbook}.
\end{proof}

\medskip

In addition to the bounds \eqref{(KK1)}, we state the following lower bounds for the first eigenvalues of problem \eqref{(16)}.

\begin{Proposition}\label{PropoR12}
Let $H\in(0,1)$. Let $\delta_1>0$ (and $<\pi$). Then, there exists a positive constant $\varepsilon_1=\varepsilon_1(H,\delta_1)$ such that the entries
$\Lambda^\varepsilon_2(\eta)$ and $\Lambda^\varepsilon_3(\eta)$ of the eigenvalue sequence \eqref{(17)} meet the estimates
\begin{eqnarray}
\Lambda_2^\varepsilon(\eta)>\pi^2+K_1\qquad &&\mbox{for } \eta\in I_1=[-\pi+\delta_1,\pi-\delta_1], \, \varepsilon<\varepsilon_1, \label{EstL2} \\
\Lambda_3^\varepsilon(\eta)>\pi^2+K_2\qquad &&\mbox{for } \eta\in [-\pi,\pi],
\, \varepsilon<\varepsilon_1,  \label{EstL3}
\end{eqnarray}
where
\begin{equation}\label{def_K1yK2}
K_1=\min\left\{2\pi\delta_1,\frac{\pi^2(1-H^2)}{2H^2}\right\},\qquad
K_2=\min\left\{2\pi^2,\frac{2\pi^2(1-H^2)}{3H^2}\right\}.
\end{equation}
\end{Proposition}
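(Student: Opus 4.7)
The plan is to combine the explicit spectrum \eqref{(31)} of the limit problem with the Floquet-stable convergence of Theorem~\ref{corollary_convergence} through a compactness-and-contradiction argument. Since $H<1$, every limit eigenvalue $\Lambda^0_{jk}(\eta)$ with $k\geq 1$ is bounded below by $\pi^2/H^2>\pi^2$, and every $\Lambda^0_{j0}(\eta)$ with $|j|\geq 2$ is at least $9\pi^2$, so only $\eta^2$, $(2\pi\pm|\eta|)^2$ and $\eta^2+\pi^2/H^2$ are relevant for the first three entries of \eqref{(S2)}.

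For $\eta\in I_1$ the two smallest eigenvalues in \eqref{(S2)} are $\Lambda_1^0(\eta)=\eta^2$ and
\[
\Lambda_2^0(\eta)=\min\bigl\{(2\pi-|\eta|)^2,\;\eta^2+\pi^2/H^2\bigr\}.
\]
Using $(2\pi-|\eta|)^2\geq (\pi+\delta_1)^2\geq \pi^2+2\pi\delta_1+\delta_1^2$ and $\eta^2+\pi^2/H^2\geq \pi^2+\pi^2(1-H^2)/H^2$, I obtain
\[
\Lambda_2^0(\eta)-\pi^2-K_1 \;\geq\; \min\bigl\{\delta_1^2,\;\pi^2(1-H^2)/(2H^2)\bigr\}\;>\;0
\]
uniformly on $I_1$. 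For $\Lambda_3^0$ on the full interval $[-\pi,\pi]$ a short ordering argument gives the identity $\Lambda_3^0(\eta)=\min\bigl\{\max(f_2,f_4),\,f_3\bigr\}$ with $f_2=(2\pi-|\eta|)^2$, $f_3=(2\pi+|\eta|)^2$ and $f_4=\eta^2+\pi^2/H^2$. Since $\max(f_2,f_4)\geq f_4\geq\pi^2/H^2$ and $f_3\geq 4\pi^2$, this yields $\Lambda_3^0(\eta)\geq \min\{4\pi^2,\pi^2/H^2\}$, and a direct case analysis split at $H=1/2$ shows that $\Lambda_3^0(\eta)-\pi^2-K_2\geq \min\{\pi^2,\pi^2(1-H^2)/(3H^2)\}>0$ uniformly in $\eta\in[-\pi,\pi]$.

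Next I transfer these limit bounds to the perturbed eigenvalues by contradiction. If \eqref{EstL2} failed, there would exist sequences $\varepsilon_k\to 0$ and $\eta_k\in I_1$ with $\Lambda_2^{\varepsilon_k}(\eta_k)\leq \pi^2+K_1$; compactness of $I_1$ lets me assume $\eta_k\to \widehat\eta\in I_1$ along a subsequence, and Theorem~\ref{corollary_convergence} then yields $\Lambda_2^{\varepsilon_k}(\eta_k)\to \Lambda_2^0(\widehat\eta)$. Passing to the limit contradicts the strict positive gap obtained above. The argument for \eqref{EstL3} is identical with $I_1$ replaced by the compact $[-\pi,\pi]$.

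The main technical obstacle is precisely the uniformity in $\eta$: the pointwise convergence of Corollary~\ref{Lemma_convergence} produces an $\varepsilon_1$ depending on $\eta$ and is useless here, so Theorem~\ref{corollary_convergence}, which tolerates a simultaneous perturbation of both $\varepsilon$ and $\eta$, is the indispensable ingredient. A secondary delicacy is that $\Lambda_1^0$ and $\Lambda_2^0$ collide at $\eta=\pm\pi$ (both equal to $\pi^2$), which is the structural reason why the estimate for $\Lambda_2^\varepsilon$ must be restricted to $I_1$ and why the term $2\pi\delta_1$ appears in $K_1$; for $\Lambda_3^\varepsilon$ this $\eta=\pm\pi$ collision involves only $\Lambda_1^0$ and $\Lambda_2^0$ and does not affect $\Lambda_3^0$, so the estimate extends to the whole interval $[-\pi,\pi]$.
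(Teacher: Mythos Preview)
Your proof is correct and follows essentially the same route as the paper: a compactness-and-contradiction argument that feeds sequences $(\varepsilon_k,\eta_k)\to(0,\widehat\eta)$ into Theorem~\ref{corollary_convergence} and then appeals to the explicit lower bounds on $\Lambda_2^0(\widehat\eta)$ and $\Lambda_3^0(\widehat\eta)$. Your identification $\Lambda_3^0(\eta)=\min\{\max(f_2,f_4),f_3\}$ is in fact a bit more careful than the paper's terse listing of candidates for $\Lambda_3^0$, and your explicit strict gaps $\min\{\delta_1^2,\pi^2(1-H^2)/(2H^2)\}$ and $\min\{\pi^2,\pi^2(1-H^2)/(3H^2)\}$ make the contradiction step completely transparent.
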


\begin{proof}
We proceed by contradiction, denying \eqref{EstL2}. This implies that for any $\varepsilon_1>0$ there exist $\varepsilon<\varepsilon_1$ and $\eta\in I_1$  such that
$\Lambda_{2}^{\ee} (\eta) \leq  \pi^2+ K_1.$
It is clear that we can take a sequence $\{\varepsilon_k\}_{k=1}^\infty$ such that $\varepsilon_k\to 0$ as $k\to +\infty$, and an associated sequence $\{ {\eta_k}\}_{k=1}^\infty$ which is bounded from above and from below, and satisfies
\begin{equation}\label{etafijobox}
\Lambda_{2}^{\ee_k} (\eta_k) \leq  \pi^2+ K_1.
\end{equation}
By subsequences,  we can  construct a sequence (still denoted by $k$) such that
$$(\eta_k,  \ee_k)\to (\widehat\eta, 0) \hbox{ as } k\to +\infty,$$
for certain  $\widehat\eta\in I_1$.
Let us show that this last assertion leads us to a contradiction.

According to Theorem \ref{corollary_convergence}, taking limits in \eqref{etafijobox}, we get
$ \Lambda^0_2(\widehat\eta) \leq  \pi^2+K_1. $
Since $\Lambda^0_2(\widehat\eta)$
can only be $(2\pi-\vert\widehat\eta\vert)^2$ or $(\pi/H)^2+ \vert\widehat\eta\vert^2$ with $\widehat\eta\in I_1$, we obtain
$$
\min\left\{(\pi+\delta_1)^2,\frac{\pi^2}{H^2}\right\}\leq\Lambda^0_2(\widehat\eta)\leq \pi^2+K_1,
$$
and this contradicts the hypothesis on the chosen $K_1$.
Consequently, \eqref{EstL2} is proved.

Analogously, we proceed by contradiction, denying \eqref{EstL3}. Thus, as in the proof of \eqref{EstL2}, there exists $\widehat\eta\in [-\pi,\pi]$ such that
$
\Lambda^0_3(\widehat\eta) \leq  \pi^2+K_2.
$
Since $\Lambda^0_3(\widehat\eta)$ can be only $(2\pi+\vert\widehat\eta\vert)^2$ or $(\pi/H)^2+ \vert\widehat\eta\vert^2$ ($\widehat\eta\in [-\pi,\pi]),$
$$
\min\left\{4\pi^2,\frac{\pi^2}{H^2}\right\}\leq\Lambda^0_3(\widehat\eta)\leq  \pi^2+K_2,
$$
and this contradicts the hypothesis on the chosen $K_2$.
\end{proof}

\begin{Proposition}\label{PropoR34}
Let $H\in(0,1/2)$.  Let $\delta_3>0$ (and $<\pi$). Then, there exists a positive constant $\varepsilon_1=\varepsilon_1(H,\delta_3)$ such that the entries
$\Lambda^\varepsilon_3(\eta)$ and $\Lambda^\varepsilon_4(\eta)$ of the eigenvalue sequence \eqref{(17)} meet the estimates
\begin{eqnarray}
\Lambda_3^\varepsilon(\eta)>4\pi^2+K_3\qquad &&\mbox{for } \eta\in I_3=[-\pi,-\delta_3]\cup[\delta_3,\pi], \, \varepsilon<\varepsilon_1,  \label{EstL3b}\\
\Lambda_4^\varepsilon(\eta)>4\pi^2+K_4\qquad &&\mbox{for } \eta\in [-\pi,\pi], \, \varepsilon<\varepsilon_1, \label{EstL4}
\end{eqnarray}
where
\begin{equation}\label{def_K3yK4}
K_3=\min\left\{4\pi\delta_3,\frac{\pi^2(1-4H^2)}{H^2}\right\}>0,\qquad K_4=\min\left\{4\pi^2,\frac{\pi^2(1-4H^2)}{2H^2}\right\}>0.
\end{equation}
\end{Proposition}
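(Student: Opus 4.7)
The plan is to follow the contradiction scheme of the proof of Proposition~\ref{PropoR12} essentially verbatim, relying on the spectral stability result Theorem~\ref{corollary_convergence} together with the explicit eigenvalues \eqref{(31)} of the limit problem. For each of the two estimates I would deny the inequality, extract via compactness of the relevant $\eta$-interval a subsequence $(\varepsilon_k,\eta_k)\to(0,\widehat\eta)$ such that the violated bound persists along the subsequence, and then pass to the limit by Theorem~\ref{corollary_convergence} to obtain $\Lambda_3^0(\widehat\eta)\le 4\pi^2+K_3$ (respectively $\Lambda_4^0(\widehat\eta)\le 4\pi^2+K_4$) for some admissible $\widehat\eta$. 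The contradiction will then come from a direct inspection of \eqref{(31)}.

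For \eqref{EstL3b}, with $\widehat\eta\in I_3$ (so $|\widehat\eta|\ge\delta_3>0$) and $H\in(0,1/2)$, the list \eqref{(31)} gives $\Lambda_1^0(\widehat\eta)=\widehat\eta^2$, $\Lambda_2^0(\widehat\eta)=(2\pi-|\widehat\eta|)^2$ and
\[
\Lambda_3^0(\widehat\eta)=\min\bigl\{(2\pi+|\widehat\eta|)^2,\ \widehat\eta^2+\pi^2/H^2\bigr\}\ge\min\bigl\{(2\pi+\delta_3)^2,\ \delta_3^2+\pi^2/H^2\bigr\}.
\]
Both quantities strictly exceed $4\pi^2+K_3$: the first equals $4\pi^2+4\pi\delta_3+\delta_3^2>4\pi^2+K_3$, and the second equals $4\pi^2+\pi^2(1-4H^2)/H^2+\delta_3^2>4\pi^2+K_3$, the strictness coming from the $\delta_3^2$ surplus in both cases.

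For \eqref{EstL4}, with $\widehat\eta\in[-\pi,\pi]$, once $\widehat\eta^2$, $(2\pi-|\widehat\eta|)^2$ and $\min\{(2\pi+|\widehat\eta|)^2,\widehat\eta^2+\pi^2/H^2\}$ are removed from \eqref{(31)}, the remaining candidates for the fourth smallest value are $\max\{(2\pi+|\widehat\eta|)^2,\widehat\eta^2+\pi^2/H^2\}$, $(4\pi-|\widehat\eta|)^2$ and $(2\pi-|\widehat\eta|)^2+\pi^2/H^2$ (all other indices $(j,k)$ give obviously larger values). Each of these three is at least $\min\{\pi^2/H^2,\ 9\pi^2\}$, and
\[
\pi^2/H^2=4\pi^2+2\cdot\frac{\pi^2(1-4H^2)}{2H^2}>4\pi^2+K_4,\qquad 9\pi^2=4\pi^2+5\pi^2>4\pi^2+4\pi^2\ge 4\pi^2+K_4,
\]
so $\Lambda_4^0(\widehat\eta)>4\pi^2+K_4$, contradicting the limit bound obtained above.

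The delicate step I expect to spend most attention on is precisely this bookkeeping of the fourth smallest eigenvalue in \eqref{(31)}: one has to check that no further index $(j,k)$ produces a candidate smaller than the three just mentioned across the whole range $\widehat\eta\in[-\pi,\pi]$, and to confirm the strict inequalities in the borderline regimes in which $K_3$ or $K_4$ is attained by the second term of its defining minimum. In those regimes strictness is inherited in \eqref{EstL3b} from the positive term $\delta_3^2$ added to $\widehat\eta^2+\pi^2/H^2$, and in \eqref{EstL4} from the factor $2$ in front of $\pi^2(1-4H^2)/(2H^2)$ that arises because $K_4$ contains this quantity halved. The rest of the argument is a direct transcription of the proof of Proposition~\ref{PropoR12}.
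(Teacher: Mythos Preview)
Your proposal is correct and follows exactly the paper's approach: the same contradiction scheme invoking Theorem~\ref{corollary_convergence}, then comparing the limit eigenvalue against the explicit list \eqref{(31)} under the constraint $H\in(0,1/2)$. Your treatment of $\Lambda_4^0(\widehat\eta)$ is in fact slightly more careful than the paper's, which asserts only the two candidates $(4\pi-|\widehat\eta|)^2$ and $(\pi/H)^2+|\widehat\eta|^2$; you correctly note that $\max\{(2\pi+|\widehat\eta|)^2,\widehat\eta^2+\pi^2/H^2\}$ and $(2\pi-|\widehat\eta|)^2+\pi^2/H^2$ are also a priori in play, and verify that each still dominates $\min\{9\pi^2,\pi^2/H^2\}$, so the same bound holds.
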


\begin{proof}
First we prove \eqref{EstL3b} with the same analysis as in Proposition \ref{PropoR12}.  We proceed by contradiction, denying \eqref{EstL3b}. Thus, as in the proof of \eqref{EstL2}, there exists $\widehat\eta\in I_3$ such that
$
\Lambda^0_3(\widehat\eta) \leq  4\pi^2+K_3.
$
Since $H<1/2$,  $\Lambda^0_3(\widehat\eta)$ can only be  $(2\pi+ \vert\widehat\eta\vert)^2$ or $(\pi/H)^2+ \vert\widehat\eta\vert^2$ ($\widehat\eta\in I_3$). Therefore, we get
$$
\min\left\{(2\pi+\delta_3)^2,\frac{\pi^2}{H^2}+\delta^2_3\right\}\leq\Lambda^0_3(\widehat\eta)\leq  4\pi^2+K_3,
$$
and this contradicts the hypothesis on the chosen $K_3$.

Finally, we proceed by contradiction, denying \eqref{EstL4}. Thus, as in the proof of \eqref{EstL2}, there exists $\widehat\eta\in [-\pi, \pi]$ such that
$\Lambda^0_4(\widehat\eta) \leq  4\pi^2+K_4.
$
Since $H<1/2$, $\Lambda^0_4(\widehat\eta)$ can only be  $(4\pi-\vert\widehat\eta\vert)^2$ or $(\pi/H)^2+ \vert\widehat\eta\vert^2$ for $\widehat\eta\in [-\pi,\pi]$. Therefore, we get
$$
\min\left\{9\pi^2,\frac{\pi^2}{H^2}\right\}\leq\Lambda^0_4(\widehat\eta) \leq  4\pi^2+K_4,
$$
and this contradicts the hypothesis on the chosen $K_4$.
\end{proof}

\section{The boundary layer phenomenon in the periodicity cell}\label{sec3}

The traditional results of the homogenization theory given in Corollary~\ref{Lemma_convergence} do not help to conclude on the splitting of band edges and in this
section we examine special solutions of a boundary-value problem in
the strip $\Pi$ with the only hole $\overline{\omega}$ of unit size.
Let us define
\begin{equation}\label{def_Xi}
\Xi:=\Pi\setminus\overline{\omega}\,.
\end{equation}
Although we apply these solutions under the symmetry assumption
\eqref{(symm)}, we only use it  in Section~\ref{subsec34}; see Figure~\ref{fig-mirror}.

\subsection{The problems in $\Xi$ and their solvability}\label{subsec31}

According to \cite{SOME}, near the perforation string
 \begin{equation}\begin{array}{c}\displaystyle
\omega^\varepsilon(0,0),\dots,\omega^\varepsilon(0,N-1)\subset
\varpi^0=\{x:\,{\color{black}\vert x_1 \vert<1/2},x_2\in(0,H)\},
\end{array}\label{(32)}\end{equation}
cf., \eqref{(2)}, there appears a boundary layer which is described in the stretched coordinates
 \begin{equation}\begin{array}{c}
\xi=(\xi_1,\xi_2)=\varepsilon^{-1}{\color{black}(x_1,x_2-\varepsilon k H)}
\end{array}\label{(33)}\end{equation}
by means of a family of special solutions to the Laplace equation
 \begin{equation}\begin{array}{c}
-\Delta_\xi W(\xi)=0,\,\,\xi\in {\color{black}\Xi,}
\end{array}\label{(34)}\end{equation}
or the Poisson equation
\begin{equation}\begin{array}{c}\displaystyle
 -\Delta_\xi W(\xi)=F(\xi),\,\,\xi\in{\color{black}\Xi},
  \end{array}\label{(4300)}\end{equation}
with the periodicity conditions
 \begin{equation}\begin{array}{c}\displaystyle
W(\xi_1,H)=W(\xi_1,0),\,\,\frac{\partial W}{\partial \xi_2}(\xi_1,H)=
\frac{\partial W}{\partial \xi_2}(\xi_1,0),\,\,\xi_1\in{\mathbb R},
\end{array}\label{(35)}\end{equation}
and the Neumann condition on the boundary of the hole
$\overline{\omega}$ inside the strip \eqref{(1)}, either homogeneous
 \begin{equation}\begin{array}{c}
 \partial_{\nu(\xi)}W(\xi)=0,\,\,\xi\in\partial\omega,
 \end{array}\label{(36)}\end{equation}
or inhomogeneous
\begin{equation}\begin{array}{c}\displaystyle
 \partial_{\nu(\xi)}W(\xi)=G(\xi),\,\,\xi\in\partial\omega,
  \end{array}\label{(43N)}\end{equation}
with particular {\color{black}functions $F(\xi)$ and $G(\xi)$}. Here,
$\nu(\xi)=(\nu_1(\xi),\nu_2(\xi))$ is the outward (with respect to
$\Xi$) normal vector on $\partial\omega$ and, therefore, the inward one with
respect to $\omega$.

\begin{Remark}
The boundary condition \eqref{(36)} is
directly inherited from the original condition \eqref{(5)} on the
boundary of the perforation string \eqref{(32)}. For any
$\Lambda^\varepsilon\leq C$, we have
 $$
 \Delta_x+\Lambda^\varepsilon=\varepsilon^{-2}(\Delta_\xi+\varepsilon^2\Lambda^\varepsilon)
 \,\,\mbox{\rm with}\,\,\varepsilon^2\Lambda^\varepsilon \leq C\varepsilon^2
 $$
and the main asymptotic part $\Delta_\xi$ of the {\color{black}above} differential operator is involved with the Laplace
equation \eqref{(34)}. The periodicity conditions \eqref{(35)} have no relation to the
original quasi-periodicity conditions \eqref{(12)}, \eqref{(13)} but are needed to support
the standard asymptotic ansatz
$$
 U^\varepsilon\approx  w(x_2)W (\varepsilon^{-1}x),
$$
where $w$ is a smooth function in $x_2\in[0,H]$ and $W$  is {\color{black}a function} $H$-periodic in $\xi_2=\varepsilon^{-1}x_2$.
\end{Remark}

We proceed with the variational formulation
 \begin{equation}\begin{array}{c}\displaystyle
 (\nabla_\xi W,\nabla_\xi V)_\Xi=(F,V)_\Xi+(G,V)_{\partial \color{black} \omega}\quad
 \forall V\in {\cal H}^1_{per}(\Xi)
\end{array}\label{(44po)}\end{equation}
of the Poisson equation \eqref{(4300)} with the periodicity \eqref{(35)} and boundary \eqref{(43N)}
conditions. Here, ${\cal H}^1_{per}(\Xi)$ is the completion of the
linear space $C^\infty_{per}(\overline{\Xi})$ (infinitely
differentiable $H$-periodic in $\xi_2$ functions with compact
supports) in the norm
  \begin{equation}\begin{array}{c}\displaystyle
 \|W;{\cal H}^1_{per}(\Xi)\|=\Big(\|\nabla_\xi W;L^2(\Xi)\|^2+\|W;L^2(\Xi(2R))\|^2\Big)^{1/2},
\end{array}\label{(norm)}\end{equation}
where $R$ is a fixed positive constant such that
 \begin{equation}\begin{array}{c}\displaystyle
 \overline{\omega}\subset\Xi(R):=\{\xi\in\Xi:\,\vert \xi_1 \vert<R\}.
\end{array}\label{(ksi)}\end{equation}
Also, for convenience, we introduce here the  cut-off
functions, $\chi_\pm\in C^\infty({\mathbb R})$,
 \begin{equation}\begin{array}{c}\displaystyle
\chi_\pm(t)=1\,\,\mbox{\rm for}\,\,\pm t>2R\,\,\mbox{\rm and}\,\, \chi_\pm(t)=0\,\,\mbox{\rm for}\,\,\pm t<R,
\end{array}\label{(chi)}\end{equation}
with a fixed   $R>0$ satisfying \eqref{(ksi)}.

\begin{Proposition}\label{Proposition_Fredholm}
Let the functions on the right-hand sides of the problem
\eqref{(4300)}, \eqref{(35)}, \eqref{(43N)} meet the inclusions
  \begin{equation}\begin{array}{c}\displaystyle
(1+\xi^2_1)^{1/2}F\in L^2(\Xi),\quad G\in L^2(\partial\omega)
 \end{array}\label{(FG)}\end{equation}
and the orthogonality condition
 \begin{equation}\begin{array}{c}\displaystyle
\int\limits_\Xi F(\xi)d\xi+\int\limits_{\partial\omega}
G(\xi){\color{black}ds_\xi}=0.
\end{array}\label{(comp)}\end{equation}
Then the problem has a solution $W\in{\cal H}_{per}^1(\Xi)$ which is
defined up to an additive constant.
\end{Proposition}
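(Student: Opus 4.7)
The plan is to treat \eqref{(44po)} as a variational problem on the quotient Hilbert space $\mathcal{H}:=\mathcal{H}^1_{per}(\Xi)/{\mathbb R}$ and apply the Riesz representation theorem. The symmetric bilinear form $a(W,V):=(\nabla_\xi W,\nabla_\xi V)_\Xi$ is positive on $\mathcal{H}^1_{per}(\Xi)$ with kernel equal to the constant functions, and the linear functional $L(V):=(F,V)_\Xi+(G,V)_{\partial\omega}$ vanishes on constants precisely by the compatibility condition \eqref{(comp)}. Both therefore descend unambiguously to $\mathcal{H}$.

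Two ingredients drive the argument. The first is a Poincar\'e-type inequality
$$
\inf_{c\in{\mathbb R}}\|V-c;L^2(\Xi(2R))\|\leq C\,\|\nabla_\xi V;L^2(\Xi)\|\quad\forall V\in\mathcal{H}^1_{per}(\Xi),
$$
which promotes the seminorm $[V]\mapsto\|\nabla_\xi V;L^2(\Xi)\|$ to a complete Hilbert norm on $\mathcal{H}$ and simultaneously renders $a$ coercive. It is proved by the usual compactness/contradiction argument: a normalized zero-mean counterexample sequence $\{V_n\}$ with $\|\nabla_\xi V_n;L^2(\Xi)\|\to 0$ would yield, via Rellich compactness on the bounded region $\Xi(2R)$, a limit that is locally constant with vanishing mean on $\Xi(2R)$, hence identically zero, contradicting the unit $L^2(\Xi(2R))$-norm.

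The second ingredient is continuity of $L$ on $\mathcal{H}$. For the boundary term, the trace inequality on $\Xi(R)$ combined with the Poincar\'e inequality above bounds $|(G,V)_{\partial\omega}|$ by $C\|G;L^2(\partial\omega)\|\,\|\nabla_\xi V;L^2(\Xi)\|$ on the zero-mean representative. For the volume term, Cauchy--Schwarz against the weight $(1+\xi_1^2)^{1/2}$ matches the hypothesis on $F$ and reduces the task to a weighted Hardy-type estimate
$$
\|(1+\xi_1^2)^{-1/2}V;L^2(\Xi)\|\leq C\,\|\nabla_\xi V;L^2(\Xi)\|
$$
for the zero-mean representative. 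On each semi-infinite end $\{|\xi_1|>2R\}\times(0,H)$ one expands $V$ in its $\xi_2$-Fourier series: the nonconstant modes are controlled (even in unweighted $L^2$) by a Poincar\'e inequality in $\xi_2$ supplied by the periodicity \eqref{(35)}, while the cross-section mean $\bar V(\xi_1)$ is a scalar $H^1$-function treated by a weighted one-dimensional Hardy argument anchored at $\xi_1=\pm 2R$.

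Once both bounds are in hand, the Riesz representation theorem produces a unique $[W]\in\mathcal{H}$ with $a([W],[V])=L([V])$ for all $[V]\in\mathcal{H}$. The descent of $a$ and $L$ to the quotient then upgrades this identity to \eqref{(44po)} tested against every $V\in\mathcal{H}^1_{per}(\Xi)$, and uniqueness of the coset $[W]$ is precisely uniqueness of $W$ up to an additive constant. The main obstacle I anticipate is the weighted Hardy bound on the scalar mean $\bar V(\xi_1)$: the naive pointwise estimate $|\bar V(\xi_1)|\leq |\bar V(\pm 2R)|+\sqrt{|\xi_1|-2R}\,\|\bar V';L^2\|$ is not integrable against $(1+\xi_1^2)^{-1}$, so one must argue more carefully, integrating by parts inside the weighted integral with the bounded primitive $\arctan\xi_1$ of the weight, and absorbing the resulting boundary trace at $\xi_1=\pm 2R$ via the compactly-supported Poincar\'e/trace estimate of the first step; this calibrates the exponent in the hypothesis on $F$ exactly so that the scheme closes.
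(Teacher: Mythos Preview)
Your overall strategy is sound and genuinely different from the paper's. The paper does not pass to the quotient; instead it adds a compactly supported zero-order perturbation $\mu X_R W$ to make the bilinear form coercive on all of $\mathcal{H}^1_{per}(\Xi)$, invokes Riesz for $\mu>0$, and then recovers $\mu=0$ by a Fredholm/spectral argument (constants being the eigenspace for $\nu=\mu$ of the associated compact spectral problem). Your route, working directly on $\mathcal{H}^1_{per}(\Xi)/\mathbb{R}$ with Riesz, is more economical and avoids the Fredholm machinery entirely. Both approaches must face the same analytic core, namely controlling $\|(1+\xi_1^2)^{-1/2}V;L^2(\Xi)\|$ by the $\mathcal{H}^1_{per}(\Xi)$-norm, and this is where your proposal wobbles.

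Your $\arctan$ integration-by-parts does not close. The boundary contribution at infinity is (a limit of) $|\bar V(T)|^2\arctan T$, and $\bar V'\in L^2$ only forces $|\bar V(T)|^2=O(T)$, so that term need not vanish; the favourable sign that rescues the classical Hardy inequality comes from the primitive $-1/\xi_1$ of $\xi_1^{-2}$, not from $\arctan$. The clean fix---and this is exactly what the paper does---is to observe $(1+\xi_1^2)^{-1}\le\xi_1^{-2}$ for $|\xi_1|>R$ and apply the one-dimensional Hardy inequality
\[
\int_R^\infty |Z'(\xi_1)|^2\,d\xi_1\ \ge\ \frac14\int_R^\infty \frac{|Z(\xi_1)|^2}{\xi_1^2}\,d\xi_1
\]
to $Z(\pm\xi_1,\xi_2)=\chi_\pm(\xi_1)V(\xi_1,\xi_2)$, integrated in $\xi_2\in(0,H)$. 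This yields directly the equivalence of the norms \eqref{(norm)} and \eqref{(weight)}, i.e.\ $\|(1+\xi_1^2)^{-1/2}V;L^2(\Xi)\|^2\le C\big(\|\nabla_\xi V;L^2(\Xi)\|^2+\|V;L^2(\Xi(2R))\|^2\big)$. Your Poincar\'e inequality then absorbs the second term on the zero-mean representative, and the rest of your scheme goes through unchanged.
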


\begin{proof}
We consider the perturbed equation
\begin{equation}\begin{array}{c}\displaystyle
 -\Delta_\xi W(\xi)+\mu X_R(\xi)W(\xi)=F(\xi),\,\,\xi\in{\color{black}\Xi,}
  \end{array}\label{(43)}\end{equation}
with the {\color{black}boundary conditions \eqref{(35)} and \eqref{(43N)};}
here, $\mu>0$ is a parameter and $X_R$ is the {\color{black} characteristic}
function of the truncated domain $\Xi(2R)$, i.e. $X_R(\xi)=1$ for
$\vert \xi_1 \vert<2R$ and $X_R(\xi)=0$ for $\vert \xi_1 \vert>2R$.
The variational formulation of the problem \eqref{(43)},
\eqref{(43N)}, \eqref{(35)} reads:
 \begin{equation}\begin{array}{c}\displaystyle
 (\nabla_\xi W,\nabla_\xi V)_\Xi+\mu(W,V)_{\Xi(2R)}=(F,V)_\Xi+(G,V)_{\partial{\color{black} \omega}}\quad
 \forall V\in {\cal H}^1_{per}(\Xi).
\end{array}\label{(44)}\end{equation}
In view of the one-dimensional Hardy inequality
$$
\int\limits_R^{+\infty}\Big\vert\frac{dZ}{d\xi_1}(\xi_1)\Big\vert^2d\xi_1\geq
\frac{1}{4}
\int\limits_R^{+\infty}\vert Z(\xi_1)\vert^2\,\frac{d\xi_1}{\xi_1^2}\quad
\forall Z\in C^\infty_c(R,+\infty),
$$
applied to the functions
$Z(\pm\xi_1,\xi_2)=\chi_\pm(\xi_1)W(\xi_1,\xi_2)$ {\color{black} with  $\chi_{\pm}$ defined by \eqref{(chi)},} and integrated in
$\xi_2\in(0,H)$, the norm \eqref{(norm)} is equivalent to the norm
 \begin{equation}\begin{array}{c}\displaystyle
 \Big(\|\nabla_\xi W;L^2(\Xi)\|^2+\|(1+\xi_1^2)^{-1/2}W;L^2(\Xi)\|^2\Big)^{1/2}.
\end{array}\label{(weight)}\end{equation}
Notice that the last Lebesgue norm in \eqref{(norm)} is computed
over a compact set while the weighted Lebesgue norm in
\eqref{(weight)} involves the whole infinite domain $\Xi$.

It is self evident that the left-hand side of the integral identity
\eqref{(44)} with $\mu>0$ can be taken as a scalar product in the
Hilbert space ${\cal H}^1_{per}(\Xi)$.  Hence,   according to the equivalency of the norms \eqref{(norm)} and
\eqref{(weight)},  and, owing to \eqref{(FG)},
the right-hand side of \eqref{(44)} defines a continuous functional
in ${\cal H}^1_{per}(\Xi)$. Thus, the Riesz representation theorem
assures that the problem \eqref{(44)} with $\mu>0$ has a unique
solution $W\in{\cal H}^1_{per}(\Xi)$ in the case \eqref{(FG)}.

According to the above mentioned equivalence of norms, considering the space ${\cal H}_{per}^1(\Xi)$ with the norm \eqref{(weight)},  and the fact that the  embedding ${\cal H}_{per}^1(\Xi) \subset
L^2(\Xi(2R))$ is compact, for any fixed $\mu$,  the spectral problem associated to \eqref{(44)}
\begin{equation}\begin{array}{c}\displaystyle
(\nabla_\xi W,\nabla_\xi V)_\Xi+\mu(W,V)_{\Xi(2R)}=\nu (W,V)_{\Xi(2R)}
\quad  \forall V\in {\cal H}^1_{per}(\Xi),
\end{array}\label{(44s)}\end{equation}
has a discrete spectrum with the corresponding  eigenfunctions being orthogonal both in $ {\cal H}_{per}^1(\Xi) $  and
$L^2(\Xi(2R))$. In addition,     $\nu=\mu$ is an eigenvalue  of \eqref{(44s)} with the  associated   eigenspace  of the constant functions $\mathfrak{C}$. Thus, considering the decomposition
${\cal H}^1_{per}(\Xi)= \mathfrak{C} \bigoplus  \mathfrak{C}^\bot$ with
$\mathfrak{C}^\bot$ the subspace formed by the elements of
${\cal H}^1_{per}(\Xi)$ which are orthogonal to the constants, by  the Fredholm alternative,
problem \eqref{(44po)} has a unique solution in  $\mathfrak{C}^\bot$ provided that the functional on the right hand side of \eqref{(44po)} is in the dual space  $(\mathfrak{C}^\bot)^*$, namely, provided that it satisfies the orthogonality condition \eqref{(comp)}.
This concludes the proof of the proposition.
\end{proof}

\subsection{Integral characteristics}\label{subsec32}

First of all, we recall that,
according to the general theory of elliptic problems in domains with
cylindrical outlets to infinity, see \cite[Ch. 5]{NaPl} and \cite[Section~3, 5]{na262}, the homogeneous problem \eqref{(34)}, \eqref{(35)}, \eqref{(36)} has
just two{\footnote{$2=\frac{1}{2}(2\times2)$ where the last $2$ is
the number of outlets to infinity in the domain $\Xi$ and the next
to the last $2$ is the number of linearly independent, $H$-periodic
in $\xi_2$ and polynomial in $\xi_1$, harmonics in the intact strip
$\Pi$, namely $1$ and $\xi_1$ in our case. This mnemonic rule works
for many other problems in domains with cylindrical and periodic
outlets to infinity, see the review paper \cite[Section~3, 5]{na262}.}}
{\color{black} linearly independent} solutions with the polynomial behavior at infinity. {\color{black} It is evident that the first solution is}  a constant, and we set
\begin{equation}\label{W0}
W^0(\xi)=1.
\end{equation}

Let us seek the second solution to the problem
\eqref{(34)}, \eqref{(35)}, \eqref{(36)} in the form
 \begin{equation}\begin{array}{c}
W^1(\xi)=\xi_1+W^1_0(\xi){\color{black}+C}
\end{array}\label{(39)}\end{equation}
where $C$ is a  certain constant, cf. \eqref{(W101)}, and $W^1_0\in{\cal H}^1_{per}(\Xi)$ satisfies the Laplace equation \eqref{(34)}, the
periodicity conditions \eqref{(35)} and the inhomogeneous Neumann
condition
 \begin{equation}\begin{array}{c}
 \partial_{\nu(\xi)} W^1_0(\xi)=-\partial_{\nu(\xi)}\xi_1=-\nu_1(\xi),\,\,\xi\in\partial\omega.
 \end{array}\label{(40)}\end{equation}

\begin{Proposition}\label{Proposition_W1}
 There is a unique   solution of problem
\eqref{(34)}, \eqref{(35)}, \eqref{(36)} with the decomposition
 \begin{equation}\begin{array}{c}\displaystyle
W^1(\xi)=\sum\limits_\pm\chi_\pm(\xi_1)(\xi_1\pm m_1(\Xi))+\widetilde{W}^1(\xi)
 \end{array}\label{(41)}\end{equation}
where  $\chi_\pm$  is defined by \eqref{(ksi)}--\eqref{(chi)},  $m_1(\Xi)$ is a
constant, and the remainder {\color{black} $\widetilde{W}^1(\xi)$ } and its
derivatives get the exponential decay $O(e^{-2\pi\vert \xi_1 \vert/H})$ as
$\xi_1\to\pm\infty$. The quantity $m_1(\Xi)$ in \eqref{(41)} is given by
 \begin{equation}\begin{array}{c}\displaystyle
 m_1(\Xi)=\frac{1}{2H}\left(\|\nabla_\xi W^1_0;L^2(\Xi)\|^2+\vert\omega\vert\right)>0,
 \end{array}\label{(42)}\end{equation}
where $\vert\omega\vert=\mbox{\rm mes}_2\,\omega$ and $W_0^1$ is a  solution of \eqref{(34)}, \eqref{(35)} and \eqref{(40)} in the space ${\cal H}^1_{per}(\Xi)$.
In addition, any solution of the problem \eqref{(34)}, \eqref{(35)}, \eqref{(36)} with polynomial growth at infinity is a linear combination
$c_0W^0+c_1W^1$ with some coefficients $c_0, c_1$.
\end{Proposition}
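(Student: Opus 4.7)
The plan is to build $W^1$ via Proposition~\ref{Proposition_Fredholm}, read off the asymptotic structure in the cylindrical outlets by separation of variables, and then extract formula \eqref{(42)} through a Green's-identity calculation.

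First, I apply Proposition~\ref{Proposition_Fredholm} with $F\equiv 0$ and $G=-\nu_1$. The compatibility condition \eqref{(comp)} reduces to $\int_{\partial\omega}\nu_1\, ds_\xi=0$, which follows from the divergence theorem applied to the constant vector field $(1,0)$ in $\omega$. This yields a solution $W_0^1\in\mathcal{H}^1_{per}(\Xi)$, unique modulo an additive constant, solving \eqref{(34)}, \eqref{(35)}, \eqref{(40)}, and then $W^1=\xi_1+W_0^1+C$ from \eqref{(39)} solves \eqref{(34)}, \eqref{(35)}, \eqref{(36)}, with the constant $C$ still to be fixed.

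Second, I study $W_0^1$ in the cylindrical outlets $\Xi^\pm=\Xi\cap\{\pm\xi_1>R\}$. Since $W_0^1$ is harmonic and $H$-periodic in $\xi_2$ there, its Fourier expansion
\[
W_0^1(\xi_1,\xi_2)=\sum_{k\in\mathbb{Z}} c_k^\pm(\xi_1)\, e^{2\pi i k\xi_2/H}
\]
yields $c_k''-(2\pi k/H)^2 c_k=0$, so $c_k^\pm$ is a linear combination of $e^{\pm 2\pi|k|\xi_1/H}$ for $k\neq 0$ and is affine in $\xi_1$ for $k=0$. The inclusion $\nabla_\xi W_0^1\in L^2(\Xi)$ kills both the slope of $c_0^\pm$ and the exponentially growing modes at each infinity, so $W_0^1$ tends to constants $b_0^\pm$ at $\pm\infty$ with remainder and derivatives of order $O(e^{-2\pi|\xi_1|/H})$, the slowest surviving rate being that of the $|k|=1$ modes. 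Choosing $C=-\tfrac{1}{2}(b_0^++b_0^-)$ and setting $m_1(\Xi)=\tfrac{1}{2}(b_0^+-b_0^-)$ produces the decomposition \eqref{(41)} with $\widetilde{W}^1$ the exponentially decaying tail. Uniqueness within the class \eqref{(41)} follows because any further shift $W^1\mapsto W^1+c$ would translate both outlet limits by the same $c$, incompatible with the antisymmetric pattern $\pm m_1(\Xi)$ unless $c=0$.

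To prove \eqref{(42)}, I apply Green's second identity to the harmonic pair $(W^1,\xi_1)$ on $\Xi(R)$. The volume integral vanishes; the horizontal sides cancel by periodicity \eqref{(35)}; the Neumann condition \eqref{(36)} leaves the contribution from $\partial\omega$ equal to $\int_{\partial\omega} W^1\nu_1\, ds_\xi$; and the vertical sections $\{\pm R\}\times(0,H)$, evaluated using \eqref{(41)}, contribute $2Hm_1(\Xi)+O(e^{-2\pi R/H})$ as $R\to+\infty$. This gives $\int_{\partial\omega} W^1\nu_1\, ds_\xi=-2Hm_1(\Xi)$. Splitting $W^1=\xi_1+W_0^1+C$, the divergence theorem (with $\nu$ pointing into $\omega$) yields $\int_{\partial\omega}\xi_1\nu_1\, ds_\xi=-|\omega|$ and $\int_{\partial\omega}\nu_1\, ds_\xi=0$; multiplying \eqref{(40)} by $W_0^1$ and integrating by parts on $\Xi(R)$, with the vertical-side boundary terms vanishing in the limit by the exponential decay, gives $\int_{\partial\omega} W_0^1\nu_1\, ds_\xi=-\|\nabla_\xi W_0^1;L^2(\Xi)\|^2$. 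Combining the three identities produces \eqref{(42)}, and $m_1(\Xi)>0$ is immediate from $|\omega|>0$.

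For the span statement, any polynomially growing harmonic solution $W$ admits, by the same Fourier argument, an expansion $W\sim a_\pm\xi_1+b_\pm$ at $\pm\infty$ up to exponentially small tails. Integrating $\Delta_\xi W=0$ over $\Xi(R)$ and using the vanishing fluxes through $\partial\omega$ (Neumann condition) and the horizontal sides (periodicity) forces $a_+=a_-$; then $U:=W-a_+ W^1$ is bounded, harmonic, and satisfies the homogeneous Neumann condition, and a Green's-identity calculation identical to the one above forces $\|\nabla_\xi U;L^2(\Xi)\|^2=0$, so $U$ is a constant multiple of $W^0$. The main technical obstacle throughout is the careful handling of the boundary terms on the artificial vertical sides $\{\pm R\}\times(0,H)$: the leading $O(R)$ contributions must cancel algebraically via the explicit asymptotics \eqref{(41)}, while the subleading contributions are controlled precisely by the exponential decay rate $e^{-2\pi|\xi_1|/H}$ established in the second paragraph.
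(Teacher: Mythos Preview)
Your proof is correct and follows essentially the same route as the paper: invoke Proposition~\ref{Proposition_Fredholm} to produce $W_0^1$, use the Fourier expansion in the outlets to extract the constants $b_0^\pm$ and the exponential decay rate, fix $C$ to symmetrize, and derive \eqref{(42)} by pairing $W^1$ with $\xi_1$ through Green's identity and tracking the boundary terms. The only noteworthy difference is that for the final span statement the paper simply appeals to the general theory of elliptic problems in domains with cylindrical outlets (the footnote preceding the proposition), whereas you supply a short self-contained argument via the flux balance $a_+=a_-$ and an energy identity; this makes your version slightly more self-contained at no real cost.
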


\begin{proof}
In the case $F=0$ and $G(\xi)=-\partial_\nu \xi_1$ the
equality \eqref{(comp)} is evidently fulfilled and, thus, the
problem \eqref{(34)}, \eqref{(35)}, \eqref{(40)} in its variational
form \eqref{(44po)} has a solution $W^1_0\in{\cal H}^1_{per}(\Xi)$ which is {\color{black} uniquely} defined
up to an additive constant. Since the boundary $\partial\omega$ is
smooth, this solution is infinitely differentiable in
$\overline{\Xi}$ and the Fourier method, in particular, gives the
decomposition
   \begin{equation}\begin{array}{c}
 W^1_0(\xi)=\sum\limits_\pm\chi_\pm(\xi_1)C_\pm+\widetilde{W}^1_0(\xi)
   \end{array}\label{(W100)}\end{equation}
with the exponentially decaying remainder $\color{black} \widetilde{W}^1_0$, and some constants
$ C_\pm$ {\color{black} which  can also depend on $R$, {\color{black}cf.~\eqref{(ksi)}}.} Setting
\begin{equation}\label{(W101)} C=-\frac{1}{2}(C_++C_-),\end{equation}
the function $W^1(\xi)=\xi_1+W^1_0(\xi)+C $
becomes the desired solution \eqref{(39)} of the problem
\eqref{(34)}, \eqref{(35)}, \eqref{(36)} admitting the representation \eqref{(41)}
with $m_1(\Xi)=\frac{1}{2}(C_+-C_-)$.

To prove the relation \eqref{(42)}, we apply  the Green formula twice and write
\begin{equation*}
\begin{split}
 &-\int\limits_{\partial\omega}(\xi_1+W^1_0(\xi))\partial_{\nu(\xi)}\xi_1ds_\xi=
 -\int\limits_{\partial\omega}\xi_1\partial_{\nu(\xi)}\xi_1ds_\xi+
 \int\limits_{\partial\omega} W^1_0(\xi)\partial_{\nu(\xi)} W^1_0(\xi)ds_\xi
\\
&\quad =\int\limits_\omega\vert\nabla_\xi\xi_1\vert^2d\xi+\|\nabla_\xi
W^1_0;L^2(\Xi)\|^2= \vert\omega\vert+\|\nabla_\xi W^1_0;L^2(\Xi)\|^2,
\end{split}
\end{equation*}
where we have used equalities \eqref{(40)} and \eqref{(44po)}. Similarly, we write
\begin{eqnarray*}
&&\displaystyle \int\limits_{\partial\omega}(\xi_1+W^1_0(\xi))\partial_{\nu(\xi)}\xi_1ds_\xi=
 \int\limits_{\partial\omega}\big((\xi_1+W^1_0(\xi))\partial_{\nu(\xi)}\xi_1
 -\xi_1\partial_{\nu(\xi)}(\xi_1+W^1_0(\xi))\big)ds_\xi
\\
 && \displaystyle \quad
 =\lim\limits_{T\to+\infty}\sum\limits_\pm \mp\int\limits_0^H
 \Big((\xi_1+W^1_0(\xi))\frac{\partial\xi_1}{\partial\xi_1}-\xi_1
 \frac{\partial }{\partial\xi_1}(\xi_1+W^1_0(\xi))\Big)\bigg\vert_{\xi_1=\pm T}d\xi_2
 \\
 && \displaystyle \quad
 =\lim\limits_{T\to+\infty}\sum\limits_\pm \mp\int\limits_0^H
 W^1_0(\pm T,\xi_2)d\xi_2=-2Hm_1(\Xi),
\end{eqnarray*}
and the relationship \eqref{(42)} follows immediately.
\end{proof}

\begin{Remark}\label{Remark_m1}
 The quantity \eqref{(42)} is an integral
characteristics of the Neumann hole $\overline{\omega}$ in the strip
$\Pi$ of width $H$ with the periodicity conditions at the lateral
sides. This characteristics looks quite similar to the classical
virtual mass tensor in the exterior Neumann problem, although {\color{black} it is a}
scalar, cf., \cite[Appendix G]{PoSe}. For any set
$\overline{\omega}$ of the positive area $\mbox{\rm mes}_2\,\omega$,
we have $m_1(\Xi)>0$. At the same time, in the case of a crack
$\Upsilon=\overline{\omega}$ along the $\xi_1$-axis we observe that
$\partial_{\nu(\xi)}\xi_1=0$ on $\partial\omega$, $W^1_0(\xi)=C^1_0$
and $\mbox{\rm mes}_2\,\Upsilon=0$, therefore,
$m_1(\Pi\setminus\Upsilon)=0$. However, the smoothness assumption on
the boundary in Section~\ref{subsec11} excludes cracks from our present
consideration.
\end{Remark}

\subsection{Other special solutions}\label{subsec33}

It proves necessary to introduce here two solutions of boundary value problems in $\Xi.$ First, let us introduce $W^2$ a solution of
the problem \eqref{(34)}, \eqref{(35)} and  the inhomogeneous Neumann condition \eqref{(40)} with the replacement $1\mapsto2$, namely
 \begin{equation}\begin{array}{c}
 \partial_{\nu(\xi)} W^2(\xi)=-\partial_{\nu(\xi)}\xi_2=-\nu_2(\xi),\,\,\xi\in\partial\omega.
 \end{array}\label{(409)}\end{equation}
The compatibility condition \eqref{(comp)} is again fulfilled so
that the problem \eqref{(34)}, \eqref{(35)}, \eqref{(409)} has a
bounded solution which is {\color{black} uniquely} defined up to an additive constant and,
therefore, is fixed uniquely in the form
 \begin{equation}\begin{array}{c}\displaystyle
W^2(\xi)=\sum\limits_\pm\pm\chi_\pm(\xi_1)m_2(\Xi)+\widetilde{W}^2(\xi)
 \end{array}\label{(419)}\end{equation}
where $m_2(\Xi)$ is  a constant, and the remainder $\widetilde{W}^2(\xi)$ and its
derivatives get the exponential decay as $\xi_1\to\pm\infty$.

In contrast to the quantity \eqref{(42)} the coefficient $m_2(\Xi)$
in \eqref{(419)} can get arbitrary sign (see Section~\ref{subsec34}). Notice
that the following integral representation is valid,  {\color{black} cf.  \eqref{(39)} and \eqref{(40)}:}
\begin{multline}\label{(4199)}
\int\limits_{\partial\omega}W^1(\xi)\partial_{\nu(\xi)}\xi_2\,ds_\xi=
\int\limits_{\partial\omega}\big(W^2(\xi)\partial_{\nu(\xi)}W^1(\xi)
-W^1(\xi)\partial_{\nu(\xi)}W^2(\xi)\big)\,ds_\xi\\
=\lim\limits_{T\to+\infty}\sum\limits_{\mp} \! \mp\int\limits_0^H \!\Big(
W^2(\pm T,\xi_2)\frac{\partial W^1}{\partial\xi_1}(\pm T,\xi_2)
-W^1(\pm T,\xi_2)\frac{\partial W^2}{\partial\xi_1}(\pm
T,\xi_2)\Big)\,d\xi_2=- 2Hm_2(\Xi).
\end{multline}

Finally, we introduce a solution $W^3$ to the Poisson equation
 \begin{equation}\begin{array}{c}\displaystyle
-\Delta_\xi W^3(\xi)=1,\,\,\xi\in\Xi,
 \end{array}\label{(S1)}\end{equation}
with the boundary conditions \eqref{(35)}, \eqref{(36)} which can be found in
the form
 \begin{equation}\begin{array}{c}\displaystyle
W^3(\xi)=-\frac{1}{2}\xi^2_1+\sum\limits_\pm\pm\chi_\pm
(\xi_1)\bigg(\frac{\vert\omega\vert}{2H}\xi_1+ m_3(\Xi)\bigg)+ \textcolor{black}{\widetilde{W}^{\,3}}(\xi)
 \end{array}\label{(SS2)}\end{equation}
where $m_3(\Xi)$ is a constant, and the remainder $\widetilde{W}^3(\xi)$ and its
derivatives get the exponential decay as $\xi_1\to\pm\infty$.
To show this, we accept the representation $W^3(\xi)= W^3_0(\xi)-\xi^2_1/2$ and
observe that $ W^3_0$ is a solution of the problem \eqref{(34)},
\eqref{(35)} with the Neumann condition
$$
\partial_{\nu(\xi)}
 W^3_0(\xi)=\nu_1(\xi)\xi_1,\,\,\xi\in\partial\omega.
$$
Thus, the argument in the proof of Proposition~\ref{Proposition_W1} to get $W_0^1$ and  $W^1$ (cf. \eqref{(39)}) gives us a
solution with the linear growth as $\xi_1\rightarrow\pm\infty$,
and we can provide the
decomposition
$$
 W^3_0(\xi)=\sum\limits_\pm\pm {\color{black}\chi_\pm(\xi_1)} \Big(C^3_1\xi_1+C^3_0({\color{black} \omega })\Big)+
\textcolor{black}{\widetilde{W}^{\,3}}(\xi),
$$
for certain coefficients $C_1^3$ and $C^3_0(\omega)$.

To compute the coefficient $C_1^3$, we apply the Green formula twice  as
follows:
{
$$
-\vert\omega\vert =\int\limits_{\partial\omega}\frac{1}{2}
\frac{\partial}{\partial\nu(\xi)}\xi_1^2ds_\xi
 =\int\limits_{\partial\omega}\partial_{\nu(\xi)} W^3_0(\xi)ds_\xi =
\lim\limits_{T\rightarrow+\infty}\sum\limits_\pm\mp\int\limits_0^H
\frac{\partial}{\partial\xi_1} W^3_0(\pm T,\xi_2)d\xi_2=-2HC^3_1.
$$
In contrast to $C^3_1$, the coefficient $C^3_0$ depends on the shape of
$\omega$ but we will not use it in the sequel, and we avoid introducing here its computation.

\subsection{The symmetry assumption and its consequences}\label{subsec34}

As pointed out in Section~\ref{subsec11} we can describe the band-gap structure of the low-frequency range of
the spectrum \eqref{(8)} only in the case of the mirror symmetry of the  hole. Therefore, we
will justify the derived asymptotics under the supposition
\eqref{(symm)}, cf. Section~\ref{sec4}.
First of all,  we realize that
\begin{equation}\begin{array}{c}\displaystyle
m_2(\Xi)=0,
\end{array}\label{(m20)}\end{equation}
so that all asymptotic expansions will simplify.
This is a consequence of  the fact that the boundary layer terms have the following important
properties.

\begin{Lemma}\label{Lemma_W}
Under the assumption \eqref{(symm)}, the
functions $W^1$, $W^3$ and $W^2$, respectively, are even and odd in
the variable $\xi_2-H/2$ and, hence,
\begin{equation}\begin{array}{c}\displaystyle
\frac{\partial W^j}{\partial \xi_2}(\xi_1,0)=\frac{\partial
W^j}{\partial \xi_2}(\xi_1,H)=0, {\color{black}\,\,\, \xi_1\in\mathbb{R},}\,\,j=1,3,\vspace{0.2cm}\\ W^2(\xi_1,0)=
W^2(\xi_1,H)=0{\color{black}, \,\,\, \xi_1\in\mathbb{R}}.
\end{array}\label{(J8)}\end{equation}
\end{Lemma}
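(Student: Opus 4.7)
The plan is to exploit the uniqueness (up to additive constants) of the solutions $W^1,W^2,W^3$ established in Section~\ref{subsec31} and Section~\ref{subsec33}: for each of them I would introduce the reflected function with an appropriate sign, check that it solves the \emph{same} boundary value problem, with the \emph{same} normalization at infinity, as the original, and then invoke uniqueness to identify the two.

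The key preparatory remark is that, by \eqref{(symm)}, the reflection $\sigma\colon(\xi_1,\xi_2)\mapsto(\xi_1,H-\xi_2)$ maps $\Xi$ to itself and $\partial\omega$ to itself, and transforms the outward normal on $\partial\omega$ as $\nu(\sigma\xi)=(\nu_1(\xi),-\nu_2(\xi))$. Consequently, the Laplacian, the right-hand side of \eqref{(S1)}, the periodicity conditions \eqref{(35)}, and a homogeneous Neumann condition on $\partial\omega$ are all invariant under the substitution $W\mapsto W\circ\sigma$, while the inhomogeneous Neumann data $-\nu_1$ and $-\nu_2$ in \eqref{(40)} and \eqref{(409)} pick up factors $+1$ and $-1$, respectively.

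For $W^1$ and $W^3$ I would set $\widetilde W:=W\circ\sigma$. The PDE, the periodicity, and the Neumann condition on $\partial\omega$ are preserved, and since $\sigma$ fixes $\xi_1$, the asymptotic decompositions \eqref{(41)} (with the normalization \eqref{(W101)}) and \eqref{(SS2)} carry exactly the same coefficients for $\widetilde W$ as for $W$. Therefore $\widetilde W-W$ is a bounded solution of the homogeneous problem \eqref{(34)}, \eqref{(35)}, \eqref{(36)} whose leading asymptotic coefficients cancel; Proposition~\ref{Proposition_W1} then forces $\widetilde W=W$, i.e., $W^1$ and $W^3$ are even in $\xi_2-H/2$.

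The case of $W^2$ is the delicate one, and I expect it to be the main obstacle. Because the datum $-\nu_2$ changes sign under $\sigma$, the correct reflected function is $\widetilde W^2:=-W^2\circ\sigma$; it satisfies \eqref{(34)}, \eqref{(35)}, \eqref{(409)} verbatim, while its decomposition \eqref{(419)} carries the constants $\mp m_2(\Xi)$ at $\xi_1\to\pm\infty$. Thus $\widetilde W^2-W^2$ is a bounded solution of the homogeneous problem and, by Proposition~\ref{Proposition_W1}, reduces to a constant; equating its two limits gives $-2m_2(\Xi)=+2m_2(\Xi)$, producing simultaneously $m_2(\Xi)=0$ (which is \eqref{(m20)}) and $\widetilde W^2=W^2$, i.e., the oddness of $W^2$ in $\xi_2-H/2$. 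The sign choice in $\widetilde W^2$ has to be paired with the implicit symmetric normalization of \eqref{(419)} so that the uniqueness-up-to-a-constant argument delivers $m_2(\Xi)=0$ and the parity in a single step. The identities \eqref{(J8)} finally follow by combining the parities just obtained with the periodicity \eqref{(35)}, evaluated (or differentiated) at $\xi_2=0,H$.
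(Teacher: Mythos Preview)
Your proof is correct and follows exactly the approach the paper sketches in one sentence: exploit the symmetry of the data under $\sigma:(\xi_1,\xi_2)\mapsto(\xi_1,H-\xi_2)$ and invoke the uniqueness statements (Proposition~\ref{Proposition_Fredholm} and the last assertion of Proposition~\ref{Proposition_W1}) to identify each $W^j$ with its appropriately signed reflection. The only minor difference is that the paper derives \eqref{(m20)} from the evenness of $W^1$ via the integral formula \eqref{(4199)} (the integrand $W^1\partial_{\nu}\xi_2$ is odd in $\xi_2-H/2$), whereas you obtain $m_2(\Xi)=0$ directly as a by-product of the uniqueness argument for $W^2$; both routes are equally valid.
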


The results in Lemma~\ref{Lemma_W} are a consequence of the definition of functions $W^i$, $i=1,2,3$,  and the uniqueness of the solutions of the problems that they satisfy in the way stated throughout the section. Equation \eqref{(m20)} follows from the evenness of $W^1$ and formula \eqref{(4199)}.

\section{Formal asymptotic analysis of simple eigenvalues}\label{sec4}

In this section, by means of matched asymptotic expansions, we construct a corrector improving the first approximation \eqref{(27)}. In particular, we provide the complete analysis of the first correction term  of the eigenpairs of \eqref{(11)}--\eqref{(14)}  in the case where the limit    eigenvalue is simple
(see Remark~\ref{remarknueva} for multiple eigenvalues).
The asymptotic structures   here constructed  will  give us a reason to introduce the symmetry assumption \eqref{(symm)}, see Section~\ref{subsec45} and Remark~\ref{Remark_N}.

\subsection{Asymptotic ans\"{a}tze}\label{subsec41}

Let us fix the Floquet parameter $\eta\in[-\pi,\pi]$ such that the eigenvalue $\Lambda^0_p(\eta)$
of the problem \eqref{(29)}--\eqref{(25)} is simple. In other words,
only one dispersion curve crosses the point
$(\eta,\Lambda^0_p(\eta))$.
Let us fix $U^0_p(\cdot;\eta)$ a corresponding eigenfunction (see \eqref{(W12)}).
We employ the method of matched asymptotic expansions, see e.g. \cite{NgSP,MaNaPl}, in the interpretation \cite{na457,na489},
to obtain  corrector terms for $\Lambda^0_p(\eta)$ and $U^0_p(\cdot;\eta)$.

Let us accept the simplest asymptotic ans\"{a}tze
\begin{equation}\begin{array}{c}\displaystyle
\Lambda^\varepsilon_p(\eta)=\Lambda^0_p(\eta)+\varepsilon\Lambda^\prime_p(\eta)+
\varepsilon^2\Lambda^{\prime\prime}_p(\eta)+\dots,
\end{array}\label{(W0)}\end{equation}
\begin{equation}\begin{array}{c}\displaystyle
U^\varepsilon_p(x;\eta)=U^0_p(x;\eta)+\varepsilon U^\prime_p(x;\eta)
+\varepsilon^2 U^{\prime\prime}_p(x;\eta)+\dots, \vspace{0.2cm}\\
\end{array}\label{(W1)}\end{equation}
We regard \eqref{(W1)} as the outer expansion, which fits in $\varpi^0\setminus \varsigma $  at a distance from  the   vertical mid-line $\varsigma=\{x:\, x_1=0,x_2\in(0,H)\}$.  We have excluded the line segment $\varsigma$ in the equation
\eqref{(W1)} because of the perforation string \eqref{(32)} which provokes the boundary layer phenomenon.
Here, and in what follows, dots stand for higher-order  terms which are inessential in our formal asymptotic analysis.

Note that, although  we will not determine the second order terms $\Lambda^{\prime\prime}_p(\eta)$ and
$U^{\prime\prime}_p(x;\eta)$,   they are involved with the
asymptotic procedure. Also,  we emphasize that the main term $U^0_p$ in
\eqref{(W1)} is a smooth function in $\varpi^0$ but the correction
terms may present jumps through   $\varsigma$.

\medskip

Inserting these
ans\"{a}tze into the equations \eqref{(11)}--\eqref{(14)} and
extracting terms of order $\varepsilon$ readily yield the following
restrictions for {\color{black}  the first order
terms $\Lambda^{ \prime}_p(\eta)$ and
$U^{ \prime}_p(x;\eta)$}:
the differential equation
\begin{equation}\begin{array}{c}
-\Delta_x
U_p^\prime(x;\eta)-\Lambda_p^0(\eta)U^\prime_p(x;\eta)=
\Lambda_p^\prime(\eta)U_p^0(x;\eta), \,\,
x\in\varpi^0\setminus\varsigma,
\end{array}\label{(W2)}\end{equation}
 the quasi-periodicity
conditions \eqref{(25)} at the vertical sides,   the Neumann
conditions on the punctured horizontal sides
\begin{equation}\begin{array}{c}\displaystyle
\frac{\partial U^\prime_p}{\partial
x_2}(x_1,0;\eta)=\frac{\partial U^\prime_p}{\partial
x_2}(x_1,H;\eta)=0,\,\,
x_1\in\Big(-\frac{1}{2},0\Big)\cup\Big(0,\frac{1}{2}\Big),
\end{array}\label{(W3)}\end{equation}
and some transmission conditions on $\varsigma$  that we determine by the matching procedure (cf. \eqref{(W11)} and \eqref{(W11N)}).
This is the aim of Section~\ref{subsec42} and \ref{subsec43} below, while $\Lambda_p^\prime(\eta)$ is determined in Section~\ref{subsec44}.

In order to do this, we introduce the inner expansion
\begin{equation}\begin{array}{c}\displaystyle
U^\varepsilon_p(x;\eta)=w^0_p(x_2;\eta)+\varepsilon w^\prime_p(\xi,x_2;\eta)+
\varepsilon^2 w^{\prime\prime}_p(\xi,x_2;\eta)+\dots,
\end{array}\label{(W4)}\end{equation}
 where we have assumed that the main term $w^0_p $ is constant in $\xi$, cf. \eqref{(33)}, while the functions arising in further terms, $ w^\prime_p $ and $w^{\prime\prime}_p$, depend on $\xi$ and satisfy a  periodicity condition in the $\xi_2$-direction, namely, conditions  \eqref{(35)}.

\subsection{The first transmission condition}\label{subsec42}

The Taylor formula implies
\begin{multline}
U^0_p(x;\eta)+\varepsilon U^\prime_p(x;\eta)+\varepsilon^2U^{\prime\prime}_p(x;\eta)=
U^0_p(0,x_2;\eta)+\varepsilon\Big(U^\prime_p(\pm0,x_2;\eta)+\xi_1\frac{\partial
U^0_p}{\partial x_1} (0,x_2;\eta)\Big)\\
+\varepsilon^2\Big(U^{\prime\prime}_p(\pm0,x_2;\eta) +\xi_1\frac{\partial U^\prime_p}{\partial x_1}
(\pm 0,x_2;\eta)+ \frac{\xi_1^2}{2}\frac{\partial^2 U^0_p}{\partial x^2_1}
(0,x_2;\eta)\Big)+\dots.\label{(W5)}
\end{multline}
Hence, comparing terms of order $1$ in \eqref{(W4)}
and \eqref{(W5)} leads us to the formula
\begin{equation}\begin{array}{c}\displaystyle
w^0_p(x_2;\eta)=U^0_p(0,x_2;\eta).
\end{array}\label{(W6)}\end{equation}

In addition, taking derivatives with respect to $\xi$ in  equations \eqref{(11)} and \eqref{(14)},  inserting \eqref{(W4)} in \eqref{(11)} and \eqref{(14)}, and extracting the terms of order $\varepsilon$, we obtain that  the first order term in the inner expansion \eqref{(W4)}
satisfies the  equation \eqref{(34)},  with periodicity conditions  \eqref{(35)} and  the
inhomogeneous Neumann condition
\begin{equation}\begin{array}{c}\displaystyle
\partial_{\nu(\xi)}w^\prime_p(\xi,x_2;\eta)=-\frac{\partial U^0_p}{\partial x_2}(0,x_2;\eta)
\partial_{\nu(\xi)}\xi_2,\,\,\xi\in\partial\omega,
\end{array}\label{(W7)}\end{equation}
which   takes into account  the
discrepancy in \eqref{(36)} of the main term \eqref{(W6)} due to its
dependence on the slow variable $x_2$. Indeed, we have used the formula for the directional derivative:
$$
\frac{\partial V}{\partial
\nu(x)}(\xi,x_2)=\frac{1}{\varepsilon}\frac{\partial V}{\partial
\nu(\xi)}(\xi,x_2)+\nu_2(\xi)\frac{\partial V}{\partial
x_2}(\xi,x_2).
$$
Furthermore, the matching of the outer and inner expansions at the first order  prescribes the following behavior at infinity for $w^\prime_p$
\begin{equation}\begin{array}{c}\displaystyle
w^\prime_p(\xi,x_2;\eta) \sim \xi_1 \frac{\partial U^0_p}{\partial
x_1}(0,x_2;\eta)+ U^\prime_p(\pm0,x_2;\eta) \quad \,\, \mbox{ as } \quad
\xi_1\to\pm\infty,
\end{array}\label{(W8)}\end{equation}
cf. \eqref{(W6)} and the factor of $\varepsilon$ on the right-hand side of
\eqref{(W5)}.

The solution of the problem \eqref{(34)}, \eqref{(35)},
\eqref{(W7)}, \eqref{(W8)} is nothing but {\color{black} a}  linear combination
\begin{equation}\label{(W9)}
w^\prime_p(\xi,x_2;\eta)=\frac{\partial U^0_p}{\partial
x_1}(0,x_2;\eta)W^1(\xi)+ \frac{\partial U^0_p}{\partial
x_2}(0,x_2;\eta)W^2(\xi)+C^\prime_p(x_2;\eta)W^0
\end{equation}
of the   solutions of the problems on $\Xi$ introduced   in Section~\ref{subsec32} and \ref{subsec33}, see \eqref{def_Xi}, where the factor   $C^\prime_p(x_2;\eta)$ is related to \eqref{(W8)}, not determined yet. However, it does not influence our further analysis.

\medskip

Using the decompositions \textcolor{black}{\eqref{(41)}}, \eqref{(419)} and recalling \eqref{W0}, we find the following expressions in \eqref{(W8)}:
\begin{equation}\label{(W10)}
U^\prime_p(\pm0,x_2;\eta)=\pm \Big(\frac{\partial U^0_p}{\partial
x_1}(0,x_2;\eta)m_1(\Xi)+ \frac{\partial U^0_p}{\partial
x_2}(0,x_2;\eta)m_2(\Xi)\Big)\!+\!C^\prime_p(x_2;\eta).
\end{equation}
Although the traces \eqref{(W10)} are not yet fixed, we compute the
jump of $U^\prime_p$ through
$\varsigma$,
$$[U^\prime_p]_0(x_2; \eta)=U^\prime_p(+0,x_2;\eta)-U^\prime_p(-0,x_2;\eta),$$
that is,
\begin{equation}\begin{array}{c}\displaystyle
[U^\prime_p]_0(x_2;\eta)=2\frac{\partial U^0_p}{\partial
x_1}(0,x_2;\eta)m_1(\Xi)+ 2\frac{\partial U^0_p}{\partial
x_2}(0,x_2;\eta)m_2(\Xi),\quad  x_2\in(0,H).
\end{array}\label{(W11)}\end{equation}

\subsection{The second transmission condition}\label{subsec43}

To proceed, we have to
deal with the third term of the inner expansion \eqref{(W4)} which
after inserting into the problem \eqref{(11)}--\eqref{(14)} and
extracting terms of order $\varepsilon^2$ leads to the problem
\begin{equation}\begin{array}{c}\displaystyle
-\Delta_\xi
w_p^{\prime\prime}(\xi,x_2;\eta)=f_p^{\prime\prime}(\xi,x_2;\eta),\,\,\xi\in\Xi,
\\\\\displaystyle
\partial_{\nu(\xi)}w_p^{\prime\prime}(\xi,x_2;\eta)=g_p^{\prime\prime}(\xi,x_2;\eta)
,\,\,\xi\in\partial\omega,
\end{array}\label{(SSS)}\end{equation}
with the periodicity conditions \eqref{(35)}. According to \eqref{(W6)}, \eqref{(29)} and \eqref{(W9)}, the right-hand sides of \eqref{(SSS)}
are given by
\begin{equation}\label{(S12)}
\begin{split}
f_p^{\prime\prime}(\xi,x_2;\eta)&=\Delta_xw_p^0(\xi,x_2;\eta)+\Lambda^0_p (\eta)
w_p^0(\xi,x_2;\eta)+2\frac{\partial^2w_p^\prime}{\partial
x_2\partial\xi_2}(\xi,x_2;\eta)\\
&=-\frac{\partial^2 U^0_p}{\partial x^2_1}(0,x_2;\eta)+2
\frac{\partial^2U^0_p}{\partial x_1\partial
x_2}(0,x_2;\eta)\frac{\partial W^1}{\partial\xi_2}(\xi) +2
\frac{\partial^2U^0_p}{\partial x_2^2}(0,x_2;\eta)\frac{\partial
W^2}{\partial\xi_2}(\xi),
\end{split}
\end{equation}
\begin{equation*}
\begin{split}
g_p^{\prime\prime}(\xi,x_2;\eta)&=
-\nu_2(\xi)\frac{\partial w_p^\prime}{\partial x_2}(\xi,x_2;\eta)\\
&=-\nu_2(\xi)\frac{\partial^2 U^0_p}{\partial x_1\partial x_2}(0,x_2;\eta)W^1(\xi)
-\nu_2(\xi)\frac{\partial^2 U^0_p}{\partial x^2_2}(0,x_2;\eta)W^2(\xi)
-\nu_2(\xi)\frac{\partial
C^\prime_p}{\partial x_2}(x_2;\eta).
\end{split}\end{equation*}
Furthermore, the matching procedure and the Taylor formula
\eqref{(W5)}, up to the order $\varepsilon^2$,  establish the following behavior at infinity:
\begin{equation}\label{(S14)}
\displaystyle
w_p^{\prime\prime}(\xi,x_2;\!\eta)  \sim
\frac{\xi_1^2}{2}\frac{\partial^2 U^0_p}{\partial x^2_1}
(0,x_2;\!\eta) +\xi_1\frac{\partial
U^\prime_p}{\partial x_1}
(\pm 0,x_2;\!\eta)+
U^{\prime\prime}_p(\pm0,x_2;\!\eta)  \quad \hbox{ as }   \quad
\xi_1\rightarrow\pm\infty.
\end{equation}

We observe that, owing to \eqref{(41)} and \eqref{(419)}, the
derivatives $\partial W^q/\partial\xi_2$ decay exponentially at
infinity while the first term on the right-hand side \eqref{(S12)}
is constant in $\xi$. Thus, a solution of the problem \eqref{(SSS)},
\eqref{(35)}\textcolor{black}{,} \eqref{(S14)} admits the quadratic growth as
$\xi_1\rightarrow\pm\infty$, and we set
\begin{equation}\begin{array}{c}\displaystyle
w_p^{\prime\prime}(\xi,x_2;\eta)=-\frac{\partial^2 U^0_p}{\partial
x^2_1}
(0,x_2;\eta)W^3(\xi)+\widehat{w}^{\,\prime\prime}_p(\xi,x_2;\eta),
\end{array}\label{(S15)}\end{equation}
where $W^3$ is given by \eqref{(SS2)}. The remaining part
$\widehat{w}^{\,\prime\prime}_p$ verifies the problem
\begin{equation}\begin{array}{c}\displaystyle
-\Delta_\xi \widehat{w}^{\,\prime\prime}_p(\xi,x_2;\eta)=
\widehat{f}^{\,\prime\prime}_p(\xi,x_2;\eta),\,\,\xi\in\Xi,
\\\\\displaystyle
\partial_{\nu(\xi)}\widehat{w}^{\,\prime\prime}_p(\xi,x_2;\eta)=g_p^{\prime\prime}(\xi,x_2;\eta)
,\,\,\xi\in\partial\omega,
\end{array}\label{(SSpr)}\end{equation}
with the periodicity condition \eqref{(35)}, where
$$\widehat{f}^{\,\prime\prime}_p(\xi,x_2;\eta)=
f^{\prime\prime}_p(\xi,x_2;\eta)+\frac{\partial^2 U^0_p}{\partial x^2_1} (0,x_2;\eta)\in L^2(\Xi),$$
and gets an exponential decay at infinity.
A solution of such a problem  exists in the form
\begin{equation}\label{(S16)}
\!\!\widehat{w}^{\,\prime\prime}_p(\xi,x_2;\eta)=\sum\limits_\pm
\pm\chi_\pm(\xi_1)(\widehat{C}_1(x_2;\eta)\xi_1+\widehat{C}_0(x_2;\eta))+
\widetilde{w}^{\,\prime\prime}_p(\xi,x_2;\eta)
\end{equation}
for certain coefficients  $\widehat{C}_0$ and $\widehat{C}_1$, and a remainder $\widetilde{w}^{\,\prime\prime}_p$  which gets the
exponential decay {\color{black}as $\xi_1\to\pm\infty.$} To derive the second transmission condition for $U^\prime_p$ arising in \eqref{(W2)}, it suffices to compute the coefficient
$\widehat{C}_1$ because the other coefficient $\widehat{C}_0$  proves to be
of no further use.

Indeed,  by applying the Green formula in    \eqref{(SSpr)}, we  readily obtain
\begin{equation}\label{(SSC)}
\int\limits_\Xi\widehat{f}^{\,\prime\prime}_p(\xi,x_2;\eta)d\xi+
\int\limits_{\partial\omega}g^{\prime\prime}_p(\xi,x_2;\eta)ds_\xi
=
-\lim\limits_{T\rightarrow+\infty}\sum\limits_\pm\pm
\int\limits_0^H\frac{\partial\widehat{w}^{\,\prime\prime}_p}{\partial
\xi_1}(\pm T,\xi_2,x_2;\eta)d\xi_2 =\textcolor{black}{-} 2H\widehat{C}_1(x_2;\eta).
\end{equation}
Let us to  process the left-hand side. First, we take $V=\xi_2$ and $W=W^q$ with
$q=1,2$ in formula \eqref{(44po)}, and we get
$$
\int\limits_\Xi\frac{\partial W^q}{\partial \xi_2}(
\xi,x_2)d\xi-\int\limits_{\partial\omega}\nu_2(\xi) W^q(
\xi,x_2)ds_\xi=0,\quad q=1,2.
$$
Using these formulas in the  definitions of $\widehat{f}^{\,\prime\prime}_p$ and $g^{\prime\prime}_p$, we have
\begin{equation*}
\begin{split}
\int\limits_\Xi\widehat{f}^{\,\prime\prime}_p(\xi,x_2;\eta)d\xi+
\int\limits_{\partial\omega}g^{\prime\prime}_p(\xi,x_2;\eta)ds_\xi
= &\frac{\partial^2 U^0_p}{\partial x_1\partial x_2}(0,x_2;\eta) \!
\int\limits_{\partial\omega}\nu_2(\xi)W^1(\xi) ds_\xi\\
&+\frac{\partial^2 U^0_p}{\partial x^2_2}(0,x_2;\eta)\!\int\limits_{\partial\omega}\nu_2(\xi)W^2(\xi)ds_\xi-\frac{\partial C^\prime_p}{\partial x_2}(x_2;\eta)\!\int\limits_{\partial\omega} \nu_2(\xi)ds_\xi.
\end{split}\end{equation*}
Now, let us note that by the Green formula it follows
$$
\int\limits_{\partial\omega} \nu_2(\xi)ds_\xi=\int\limits_{\partial\omega} \partial_{\nu(\xi)}\xi_2 ds_\xi=0,
$$
{\color{black}which cancels the term containing the derivative of $C^\prime_p(x_2;\eta).$}
Besides, from \eqref{(409)} and \eqref{(44po)}, we obtain
\begin{equation}\label{(Mxi)}
-\int\limits_{\partial\omega}\nu_2(\xi)W^2(\xi){\color{black}ds_\xi}=
\int\limits_{\partial\omega}W^2(\xi)\partial_{\nu(\xi)}W^2(\xi){\color{black}ds_\xi} =\|\nabla_\xi W^2;L^2(\Xi)\|^2=:M(\Xi)>0.
\end{equation}
Finally, considering  \eqref{(4199)} and using \eqref{(SSC)}--\eqref{(Mxi)}, we get
\begin{equation}\begin{array}{c}\displaystyle
-2H\widehat{C}_1(x_2;\eta)= \textcolor{black}{2H}\frac{\partial^2 U^0_p}{\partial
x_1\partial x_2}(0,x_2;\eta)m_2(\Xi)-\frac{\partial^2
U^0_p}{\partial x^2_2}(0,x_2;\eta)M(\Xi).
\end{array}\label{(S17)}\end{equation}
Gathering \eqref{(S14)}, \eqref{(S15)}, \eqref{(S16)}, \eqref{(S17)}
and \eqref{(SS2)} we conclude that
\begin{equation}\label{(S18)}
\frac{\partial U^\prime_p}{\partial x_1}
(\pm 0,x_2;\eta)=\textcolor{black}{\mp} m_2(\Xi)\frac{\partial^2 U^0_p}{\partial
x_1\partial x_2}(0,x_2;\eta) \mp \frac{|\omega|}{2H}\frac{\partial^2 U^0_p}{\partial
x^2_1}(0,x_2;\eta) \textcolor{black}{\pm\frac{M(\Xi)}{2H}}\frac{\partial^2 U^0_p}{\partial
x^2_2}(0,x_2;\eta).
\end{equation}
Thus, we obtain the jump through $\varsigma$ for the normal derivative of  $U^\prime_p$
\begin{equation}\label{(W11N)}
\Big[\frac{\partial U^\prime_p}{\partial
x_1}\Big]_0(x_2;\eta)=\textcolor{black}{-}2m_2(\Xi)\frac{\partial^2 U^0_p}{\partial
x_1\partial x_2}(0,x_2;\eta)-\frac{|\omega|}{H}\frac{\partial^2
U^0_p}{\partial x^2_1}(0,x_2;\eta)\textcolor{black}{+\frac{M(\Xi)}{H}} \frac{\partial^2 U^0_p}{\partial x^2_2}(0,x_2;\eta).
\end{equation}
This completes  the problem for correction terms $\Lambda^\prime_p(\eta)$
and $U^\prime_p(\cdot;\eta)$ in the ans\"{a}tze \eqref{(W0)} and
\eqref{(W1)}. Namely, they are the unknowns of the problem \eqref{(W2)},
\eqref{(W3)}, \eqref{(25)}, \eqref{(W11)} and \eqref{(W11N)}. The existence and uniqueness of both terms is provided below.

\subsection{Computing the correction term in the eigenvalue asymptotics}\label{subsec44}

Since, by our assumption, the eigenvalue
$\Lambda_p^0(\eta)$ is simple, the  solution of   problem      \eqref{(W2)},
\eqref{(W3)}, \eqref{(25)}, \eqref{(W11)}, {\color{black}\eqref{(W11N)}} has  only
one  compatibility condition. Indeed,  it must satisfy  the orthogonality condition, in the sense of
the Green formula, of the  right-hand side of \eqref{(W2)} to the eigenfunction
\begin{equation}\begin{array}{c}\displaystyle
U^0_p(x;\eta):=U^0_{jk}(x;\eta)=e^{i(\eta+2\pi
j)x_1}\cos\Big(\pi k\frac{x_2}{H}\Big),
\end{array}\label{(W12)}\end{equation}
see \eqref{(31)}.  This determines completely $ \Lambda_p'(\eta)$ as we show below (cf. \eqref{(W14)}).

First, we observe that,  by \eqref{(W12)},
$$
\|U^0_p;L^2(\varpi^0)\|^2=\|U^0_p;L^2(\varsigma)\|^2=\frac{1}{2}(1+\delta_{k,0})H,
$$
where $\delta_{k,l}$ denotes the Kronecker symbol.
Then, we multiply \eqref{(W2)} by  the conjugate of $U^0_p$ and integrate over $ \varpi^0 \setminus \varsigma $
to  get
$$
\frac{1}{2}(1+\delta_{k,0})H\Lambda^\prime_p(\eta)
=-\int\limits_{\varpi^0}
\left(\Delta_x U^\prime_p(x;\eta)+\Lambda^0_p{\color{black}(\eta)} U^\prime_p(x;\eta)\right)\overline{U^0_p(x;\eta)}dx.
$$
Because of \eqref{(W3)}, \eqref{(25)} and  \eqref{(W12)}, the Green formula yields
\begin{equation*}
\begin{split}
\frac{1}{2}(1+\delta_{k,0})H\Lambda^\prime_p(\eta) &=\int_0^H \Big(\overline{U^0_p(x;\eta)} \frac{\partial
U_p^\prime}{\partial x_1}(x;\eta)-U^\prime_p(x;\eta)\overline{
\frac{\partial U_p^0}{\partial
x_1}(x;\eta)}\Big)\bigg\vert_{x_1=-0}^{+0} \!\!dx_2\\
&=\int_0^H \Big(\overline{U^0_p(0,x_2;\eta)}
\Big[\frac{\partial U_p^\prime}{\partial
x_1}\Big]_0(x_2;\eta)-[U^\prime_p]_0(x_2;\eta)\overline{
\frac{\partial U_p^0}{\partial x_1}(0,x_2;\eta)}\Big)dx_2.
\end{split}\end{equation*}
Now, taking into account the jump
conditions  \eqref{(W11)} and \eqref{(W11N)}, and using {\color{black}\eqref{(W12)},}
$$
\int_0^H\cos^2\Big(\pi
k\frac{x_2}{H}\Big)dx_2=\!\frac{H}{2}(1+\delta_{k,0}){\color{black}\quad \mbox{and} \quad }\int_0^H\cos\Big(\pi
k\frac{x_2}{H}\Big)\sin\Big(\pi k\frac{x_2}{H}\Big)dx_2=0,
$$
we have
$$
\frac{1}{2}(1+\delta_{k,0})H\Lambda^\prime_p(\eta)
=\!\frac{H}{2}(1+\delta_{k,0})\bigg(\!\frac{\vert\omega\vert}{H}(\eta+2\pi j)^2\textcolor{black}{-\frac{M(\Xi)}{H}}\Big(\!\frac{\pi
k }{H}\!\Big)^{\!2}\!-2(\eta+2\pi j)^2m_1(\Xi)\!\bigg).
$$
As a result, we obtain the relationship
\begin{equation}\label{(W14)}
\Lambda^\prime_p(\eta) =\textcolor{black}{-}
2\bigg(\frac{\pi^2k^2}{H^2}\textcolor{black}{\frac{M(\Xi)}{2H}}+(\eta+2\pi
j)^2\Big(m_1(\Xi)-\frac{\vert\omega\vert}{2H}\Big)\bigg).
\end{equation}

Notice that, according to \eqref{(Mxi)} and \eqref{(42)}, the right-hand side of \eqref{(W14)} is negative.
Also, the process determines uniquely  the terms $\Lambda_p^{\prime}$ and  $U_p^{\prime}$ in the asymptotic series \eqref{(W0)} and \eqref{(W1)}.

\begin{Remark} \label{remarknueva}
{\rm
Assuming that $(\eta, \Lambda_p^0(\eta))$ is a crossing point of two dispersion curves does not affect the formal computations in Sections~\ref{subsec41}--\ref{subsec43}, $U_p^0(\cdot,\eta)$  being any of the corresponding  eigenfunctions in \eqref{(W12)} with $k=0$.
Also, in Section~\ref{subsec44},  when determining the second term of the asymptotic expansions  $\Lambda_p^\prime(\eta)$ and $U_p^\prime(\cdot,\eta)$, there is no contradiction  since the corresponding eigenfunctions only depend on $x_1$, namely, rewriting computations we obtain
$$
\Lambda^\prime_p(\eta) =-2(\eta+2\pi j)^2\Big(m_1(\Xi)-\frac{\vert\omega\vert}{2H}\Big)
$$
while there are two associated solutions    $U_p^\prime(\cdot,\eta)$ one for each eigenfunction of $\Lambda_p^0(\eta)$.  }
\end{Remark}

\subsection{On the symmetry assumption}\label{subsec45}
The first term \eqref{(W6)}
of the inner expansion \eqref{(W4)} meets the Neumann condition
\eqref{(14)} at the sides {\color{black}$x_2=0$ and $x_2=H$} of the periodicity cell
$\varpi^\varepsilon$ because $U^0_p$ does. Let us examine the second
term \eqref{(W9)} which satisfies
\begin{equation}
\begin{split}
\!\!\!\frac{\partial w^\prime_p}{\partial
x_2}\Big(\frac{x}{\varepsilon},x_2;\eta\Big)\bigg|_{x_2=0}=&\frac{1}{\varepsilon}\bigg(
\frac{\partial W^1}{\partial
\xi_2}\Big(\frac{x_1}{\varepsilon},0\Big)\frac{\partial
U^0_p}{\partial x_1} ({\color{black}0},0;\eta )+\frac{\partial W^2}{\partial
\xi_2}\Big(\frac{x_1}{\varepsilon},0\Big)\frac{\partial
U^0_p}{\partial x_2} ({\color{black}0},0;\eta )\bigg)\\
&+\frac{\partial^2 U^0_p}{\partial x_1\partial x_2} ({\color{black}0},0;\eta
)W^1\Big(\frac{x_1}{\varepsilon},0\Big)+\frac{\partial^2
U^0_p}{\partial x^2_2} ({\color{black}0},0;\eta
)W^2\Big(\frac{x_1}{\varepsilon},0\Big) \textcolor{black}{+\frac{\partial C^\prime_p}{\partial x_2}(0;\eta)}.
\end{split}
\label{(W99)}\end{equation}
A similar formula is valid at $x_2=H$. \textcolor{black}{Using \eqref{(30)} the second and third terms on
the right-hand side of \eqref{(W99)} vanish}. {\color{black} Similarly, the last term vanishes because,  by construction (cf. \eqref{(W9)}, \eqref{(W8)} and \eqref{(W3)}), it satisfies}
$$
\frac{\partial C^\prime_p}{\partial x_2}(0;\eta)=\frac{\partial C^\prime_p}{\partial x_2}(H;\eta)=0,
$$
but the other addends  do so only  if
\begin{equation}\begin{array}{c}\displaystyle
\frac{\partial W^1}{\partial \xi_2}(\xi_1,0)=0,\,\, W^2(\xi_1,0)=0,
\quad \xi_1\in{\mathbb R},
\end{array}\label{(W98)}\end{equation}
{\color{black}cf. also   \eqref{(W12)}.}
{\color{black}There} is no reason for \eqref{(W98)} to be fulfilled for any
asymmetric hole $\omega$ but, owing to Lemma~\ref{Lemma_W}, the assumption
\eqref{(symm)} gives us the relations \eqref{(J8)} and, therefore,
\eqref{(W98)}. Furthermore{\color{black},} all terms on the right-hand side of
\eqref{(W99)} vanish{\color{black}.}

\begin{Remark}\label{Remark_N}
{\rm If the relation \eqref{(W98)} is denied, the inner expansion
\eqref{(W9)} leaves in the Neumann condition \eqref{(14)}
discrepancies of order $1$ which are localized in the vicinity of the points $(0,0)$, $(0,H)$ and decay
exponentially at a distant from them. To compensate, a new
boundary layer is needed involving solutions to the Neumann
problems for the Laplace operator in the half-planes with
semi-infinite families of holes, that is, in
$$
{\mathbb R}^2_+\setminus\bigcup\limits_{k=0}^\infty
\overline{\omega(1,k)}\,\,\mbox{\rm and}\,\,{\mathbb
R}^2_-\setminus\bigcup\limits_{k=0}^\infty \overline{\omega(1,-k)},
$$
cf.~\eqref{(3)}. Asymptotics at infinity of solutions to elliptic
boundary-value problems in angular domains with periodic boundaries
have been investigated in \cite{na143,na395}. However, such a
two-dimensional boundary layer seriously complicates the asymptotic
procedure and we postpone the research in the case of more general perforation for
another paper.}
\end{Remark}

\section{Some bounds for convergence rates}\label{sec5}

In this section, we obtain some important complementary results on the approximation \eqref{(27)}.
In particular, we get some estimates which establish the closeness of  eigenvalues  $\Lambda^\varepsilon(\eta)$ of problem \eqref{(11)}--\eqref{(14)} and  the first three
dispersion curves of the homogenized problem (see Theorem~\ref{Th5.1}). As a consequence, we can identify the first eigenvalue $\Lambda_1^\varepsilon(\eta)$ at a certain distance from the nodes
$(\eta_{\mbox{\tiny$\square$}}, \Lambda_{\mbox{\tiny$\square$}})= (\pm\pi,\pi^2)$
where the question of their splitting does not appear at all (see Corollary~\ref{Th5.2}). For this first eigenvalue, we get a uniform bound for the  convergence rate. The analysis of this section does not take into account the multiplicity of  the eigenvalues of the limit problem.

Let us summarize the results of the section:

\begin{Theorem}\label{Th5.1}
There exist $\Lambda_\star^\varepsilon(\eta)$ and  $\Lambda_\pm^\varepsilon(\eta)$ eigenvalues of  the problem \eqref{(11)}--\eqref{(14)} which satisfy
\begin{eqnarray}
  \vert\Lambda_\star^\varepsilon(\eta)-\Lambda_1^0(\eta)\vert\leq C_0\varepsilon && \quad\mbox{for } \eta\in[-\pi,\pi], \, 0<\varepsilon<\varepsilon_0, \label{VisikL1} \\
  \vert\Lambda_\pm^\varepsilon(\eta)-\Lambda_\pm^0(\eta)\vert\leq C_0\varepsilon && \quad\mbox{for } \eta\in[-\pi,\pi], \, 0<\varepsilon<\varepsilon_0, \label{VisikLpm}
\end{eqnarray}
where $\Lambda_1^0(\eta)=\eta^2$ and $\Lambda_\pm^0(\eta)=(\eta\pm 2\pi)^2$ are eigenvalues of  problem \eqref{(29)}--\eqref{(25)}, and the positive constants  $\varepsilon_0$ and $C_0$ are independent of $\varepsilon$ and $\eta$.
\end{Theorem}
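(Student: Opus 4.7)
The overall strategy is to apply the Vishik--Lyusternik lemma on almost eigenvalues (Lemma~\ref{Lemma_Visik}) to the self-adjoint operator $A^\varepsilon(\eta)$: for each of the three target limit eigenvalues $\Lambda^0 \in \{\eta^2, (\eta+2\pi)^2, (\eta-2\pi)^2\}$, which correspond to the eigenfunctions $U^0_{j,0}(x;\eta)=e^{i(\eta+2\pi j)x_1}$ with $j\in\{0,+1,-1\}$, I construct an approximate eigenfunction $\mathcal{U}^\varepsilon_j(\cdot;\eta)\in\mathcal{D}(A^\varepsilon(\eta))$ satisfying
\begin{equation*}
\|(-\Delta_x-\Lambda^0)\mathcal{U}^\varepsilon_j(\cdot;\eta)\|_{L^2(\varpi^\varepsilon)} \leq C_0\,\varepsilon\,\|\mathcal{U}^\varepsilon_j(\cdot;\eta)\|_{L^2(\varpi^\varepsilon)}
\end{equation*}
with $C_0$ independent of $\eta\in[-\pi,\pi]$ and of $\varepsilon\in(0,\varepsilon_0)$. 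Lemma~\ref{Lemma_Visik} then yields an eigenvalue of $A^\varepsilon(\eta)$ within distance $C_0\varepsilon$ of $\Lambda^0$, giving in turn the three eigenvalues $\Lambda_\star^\varepsilon(\eta),\Lambda_\pm^\varepsilon(\eta)$ in \eqref{VisikL1}--\eqref{VisikLpm}.

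\textbf{Construction of the approximation.} Because $U^0_{j,0}$ is independent of $x_2$, the inner corrector \eqref{(W9)} collapses (using the symmetry consequence \eqref{(m20)}) to $w'_j(\xi)=i(\eta+2\pi j)W^1(\xi)+C'_j W^0$, where $W^1$ is the function of Proposition~\ref{Proposition_W1}. Since $W^1(\xi)-\xi_1$ is bounded and converges exponentially to $\pm m_1(\Xi)$ as $\xi_1\to\pm\infty$, and is $H$-periodic in $\xi_2$ (hence matches the $\varepsilon H$-periodicity of the string of holes), the natural ansatz is
\begin{equation*}
\mathcal{U}^\varepsilon_j(x;\eta) \;=\; e^{i(\eta+2\pi j)x_1} \;+\; \varepsilon\,i(\eta+2\pi j)\,\chi(x_1)\bigl[W^1(x/\varepsilon)-x_1/\varepsilon\bigr],
\end{equation*}
where $\chi\in C_c^\infty(-1/2,1/2)$ is an $\eta$-independent cutoff equal to $1$ on $|x_1|\le 1/4$. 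The outer part $e^{i(\eta+2\pi j)x_1}$ carries the quasi-periodicity at $x_1=\pm 1/2$; the inner correction vanishes there since $\chi$ does. The mirror-symmetry assumption \eqref{(symm)} and Lemma~\ref{Lemma_W} give $\partial W^1/\partial\xi_2=0$ at $\xi_2=0,H$, which preserves the Neumann condition on the horizontal sides $\Upsilon_\pm$. On $\partial\omega^\varepsilon(0,k)$ the chain rule and \eqref{(40)} yield $\partial_{\nu(x)}[W^1(x/\varepsilon)-x_1/\varepsilon]=-\varepsilon^{-1}\nu_1$, which exactly cancels the leading part $i(\eta+2\pi j)\nu_1$ of $\partial_\nu U^0_p|_{\partial\omega^\varepsilon}$, leaving only the Taylor remainder $i(\eta+2\pi j)\nu_1(U^0_p-1)=O(\varepsilon)$; a further $O(\varepsilon^2)$ corrector localized in $\cup_k\omega^\varepsilon(0,k)$ absorbs this residual into $\mathcal{D}(A^\varepsilon(\eta))$ without changing the leading estimates.

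\textbf{Defect estimate.} Using $-\Delta_x U^0_{j,0}=\Lambda^0 U^0_{j,0}$ and $\Delta_\xi W^1=0$, the only nontrivial contribution to $(-\Delta_x-\Lambda^0)\mathcal{U}^\varepsilon_j$ comes from the commutator $[-\Delta_x,\chi]$ acting on $\varepsilon[W^1(x/\varepsilon)-x_1/\varepsilon]$, plus a zeroth-order term $\Lambda^0\cdot\varepsilon\chi[W^1(x/\varepsilon)-x_1/\varepsilon]$. The commutator is supported where $\nabla\chi\neq 0$, i.e.\ at a fixed distance from $\varsigma$, where the exponential decay $|W^1(\xi)-\xi_1\mp m_1|\le Ce^{-2\pi|\xi_1|/H}$ with $|\xi_1|\sim 1/\varepsilon$ makes that contribution exponentially small. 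The remaining zeroth-order term is $O(\varepsilon)$ in $L^\infty$, hence $O(\varepsilon)$ in $L^2(\varpi^\varepsilon)$. All constants depend only on $\|W^1-\xi_1\|_\infty$, $m_1(\Xi)$, $\|\chi\|_{C^2}$, and $|\eta+2\pi j|\le 3\pi$, hence are uniform in $\eta$. The lower bound $\|\mathcal{U}^\varepsilon_j\|_{L^2(\varpi^\varepsilon)}\ge c>0$ follows from $|U^0_{j,0}|=1$ and $|\varpi^\varepsilon|\to 1\cdot H$ as $\varepsilon\to 0$.

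\textbf{Main obstacle.} The chief difficulty is the uniformity in $\eta$, specifically at the truss nodes $\eta=\pm\pi$ where $\Lambda_1^0(\eta)$ collides with $\Lambda_-^0(\eta)$ (resp.\ $\Lambda_+^0(\eta)$) at $\pi^2$. The theorem does not claim distinctness of $\Lambda_\star^\varepsilon$ and $\Lambda_\pm^\varepsilon$ there, so it suffices that each of the three $O(\varepsilon)$-approximations yields \emph{some} eigenvalue; nevertheless, the construction must be carried out with a single $\eta$-independent cutoff $\chi$ and with control by $|\eta+2\pi j|\le 3\pi$, so that no constant blows up as the curves meet. Because $W^1, W^2, W^3$ and $m_i(\Xi)$ are purely geometric and hence $\eta$-independent, this uniformity is achieved. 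The ability to detect \emph{splitting} of these colliding eigenvalues is postponed to Section~\ref{sec6}, which requires a genuinely multiscale ansatz in the Floquet parameter and is not needed for the present theorem.
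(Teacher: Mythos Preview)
Your strategy --- Vishik--Lyusternik with a boundary-layer corrector built from $W^1$ --- is the same as the paper's, but your execution has two gaps that the paper's construction avoids.

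\textbf{Main gap: the domain condition.} You propose to apply the almost-eigenvalue lemma to $A^\varepsilon(\eta)$ in its strong form, which requires $\mathcal{U}^\varepsilon_j\in\mathcal{D}(A^\varepsilon(\eta))$, i.e.\ the Neumann condition must hold \emph{exactly} on every $\partial\omega^\varepsilon(0,k)$. Your ansatz does not achieve this: as you note, on the holes $\partial_\nu\mathcal{U}^\varepsilon_j = i(\eta+2\pi j)\nu_1\bigl(e^{i(\eta+2\pi j)x_1}-1\bigr)=O(\varepsilon)$. You dismiss this with an unspecified ``further $O(\varepsilon^2)$ corrector'', but constructing one is not automatic: there are $N=\varepsilon^{-1}$ holes, and you must show that the corrections sum to $O(\varepsilon)$ in the $L^2$-defect and do not spoil the $H^2$ regularity or the boundary conditions on $\Upsilon_\pm$. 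The paper sidesteps this in two ways. First, it works with the \emph{variational} operator $\mathcal{B}^\varepsilon(\eta)$ on $\mathcal{H}^\varepsilon(\eta)$ (see \eqref{(J2)}--\eqref{(J4)}), so the test function only needs to lie in $H^{1,\eta}_{per}(\varpi^\varepsilon)$ and the defect is measured in the dual (weak) sense; there is no $H^2$ or exact Neumann requirement. Second, near the perforation it replaces $U^0_\pm(x_1;\eta)$ by its first-order Taylor polynomial $U^0_\pm(0;\eta)+x_1\,\partial_{x_1}U^0_\pm(0;\eta)$ via a cutoff $X^\varepsilon$ at scale $\varepsilon$ (see \eqref{(un2)}); this linear function plus $\varepsilon\,\partial_{x_1}U^0_\pm(0;\eta)\,W^1_0(x/\varepsilon)$ then satisfies the Neumann condition on $\partial\omega^\varepsilon(0,k)$ \emph{exactly}, by \eqref{(40)}. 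Either device would close your gap; as written, your argument is incomplete.

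\textbf{Secondary error: the commutator estimate.} You claim the commutator $[-\Delta_x,\chi]\bigl(\varepsilon(W^1(x/\varepsilon)-x_1/\varepsilon)\bigr)$ is exponentially small because of the decay of $W^1(\xi)-\xi_1\mp m_1(\Xi)$. But the function you are differentiating is $W^1(\xi)-\xi_1$, which converges to the \emph{nonzero} constants $\pm m_1(\Xi)$; only the remainder $\widetilde{W}^1$ decays. Hence the zero-order part $\chi''(x_1)\cdot\varepsilon\bigl(W^1(x/\varepsilon)-x_1/\varepsilon\bigr)$ is $O(\varepsilon)$, not exponentially small. This does not destroy the final $O(\varepsilon)$ bound you need, but the statement as written is false.

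Your remark that uniformity in $\eta$ is unproblematic because all constants are bounded by $|\eta+2\pi j|\le 3\pi$ and the boundary-layer data are $\eta$-independent is correct and matches the paper's reasoning.
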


\begin{Corollary}\label{Th5.2}
Let $H\in(0,1).$ Fixed $\delta\in(0,\pi)$, there exists $\varepsilon_0=\varepsilon_0(H)$ such that
the eigenvalue $\Lambda_1^\varepsilon(\eta)$ of  problem \eqref{(11)}--\eqref{(14)}   in the sequence \eqref{(17)},
and the   eigenvalue $\Lambda_1^0(\eta)$  of problem  \eqref{(29)}--\eqref{(25)} in the sequence \eqref{(S2)} satisfy
$$\vert\Lambda_1^\varepsilon(\eta)-\Lambda_1^0(\eta)\vert\leq C_0 \varepsilon\qquad \mbox{for }\eta\in[-\pi+\delta,\pi-\delta] \mbox{ and } 0<\varepsilon<\varepsilon_0,$$
where the positive constant $C_0$ is independent of the parameters $\varepsilon$ and $\eta$.
\end{Corollary}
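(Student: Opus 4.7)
The plan is to deduce Corollary~\ref{Th5.2} by combining Theorem~\ref{Th5.1} (which produces an eigenvalue of the perturbed problem close to $\Lambda_1^0(\eta)=\eta^2$) with the lower bound of Proposition~\ref{PropoR12} on $\Lambda_2^\varepsilon(\eta)$ (which excludes any other eigenvalue from sitting below it). The difficulty of Theorem~\ref{Th5.1} is that it only asserts the existence of \emph{some} eigenvalue $\Lambda_\star^\varepsilon(\eta)$ within $O(\varepsilon)$ of $\eta^2$; the corollary requires that this be the bottom of the spectral sequence \eqref{(17)}, and this identification is precisely what Proposition~\ref{PropoR12} will supply once $\eta$ is kept away from $\pm\pi$.

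First, I would fix $\delta\in(0,\pi)$ and apply Proposition~\ref{PropoR12} with $\delta_1:=\delta$. This gives a positive constant $K_1=K_1(H,\delta)$, defined by \eqref{def_K1yK2}, and a threshold $\varepsilon_1=\varepsilon_1(H,\delta)$ such that
\[
\Lambda_2^\varepsilon(\eta)>\pi^2+K_1\qquad\text{for }\eta\in[-\pi+\delta,\pi-\delta],\ \varepsilon<\varepsilon_1.
\]
On the other hand, from \eqref{VisikL1} in Theorem~\ref{Th5.1} and from $\Lambda_1^0(\eta)=\eta^2\le(\pi-\delta)^2$ on the interval $[-\pi+\delta,\pi-\delta]$, the eigenvalue $\Lambda_\star^\varepsilon(\eta)$ of problem \eqref{(11)}--\eqref{(14)} satisfies
\[
\Lambda_\star^\varepsilon(\eta)\le(\pi-\delta)^2+C_0\varepsilon
\qquad\text{for }\eta\in[-\pi+\delta,\pi-\delta],\ \varepsilon<\varepsilon_0^{(1)}.
\]

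Next, I would choose $\varepsilon_0=\varepsilon_0(H,\delta)\in(0,\min\{\varepsilon_0^{(1)},\varepsilon_1\}]$ small enough so that
\[
(\pi-\delta)^2+C_0\varepsilon_0<\pi^2+K_1,
\]
which is possible because $(\pi-\delta)^2<\pi^2\le\pi^2+K_1$ strictly. For every $\varepsilon<\varepsilon_0$ and every $\eta\in[-\pi+\delta,\pi-\delta]$ we then have $\Lambda_\star^\varepsilon(\eta)<\Lambda_2^\varepsilon(\eta)$. Since $\Lambda_\star^\varepsilon(\eta)$ is an entry of the monotone sequence \eqref{(17)} and lies strictly below $\Lambda_2^\varepsilon(\eta)$, it must coincide with $\Lambda_1^\varepsilon(\eta)$. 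Inserting this identification back into \eqref{VisikL1} yields
\[
|\Lambda_1^\varepsilon(\eta)-\Lambda_1^0(\eta)|
=|\Lambda_\star^\varepsilon(\eta)-\eta^2|\le C_0\varepsilon
\]
uniformly in $\eta\in[-\pi+\delta,\pi-\delta]$ and $\varepsilon\in(0,\varepsilon_0)$, which is the stated bound.

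The only delicate point is the uniformity of the threshold $\varepsilon_0$ and the constant $C_0$: both are inherited from Theorem~\ref{Th5.1} and Proposition~\ref{PropoR12}, which are already stated uniformly in $\eta$, so no further $\eta$-dependence enters the argument. The reason the result cannot be extended up to $\eta=\pm\pi$ is that there the first and second dispersion branches of the limit problem meet at the node $(\pm\pi,\pi^2)$, so Proposition~\ref{PropoR12} necessarily degenerates as $\delta\to0$; this degeneration is exactly what Theorems~\ref{Theorem_multiple1}--\ref{Theorem_multiple2} will have to address separately.
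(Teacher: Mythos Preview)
Your proof is correct and follows essentially the same approach as the paper: combine the existence of $\Lambda_\star^\varepsilon(\eta)$ from Theorem~\ref{Th5.1} with the lower bound \eqref{EstL2} on $\Lambda_2^\varepsilon(\eta)$ from Proposition~\ref{PropoR12} to force the identification $\Lambda_\star^\varepsilon(\eta)=\Lambda_1^\varepsilon(\eta)$ on $[-\pi+\delta,\pi-\delta]$. Your write-up is in fact more explicit than the paper's one-paragraph argument at the end of Section~\ref{subsec54}, and your observation that $\varepsilon_0$ necessarily depends on $\delta$ (through Proposition~\ref{PropoR12}) is a valid refinement of the statement.
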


The proofs of these results are in Section~\ref{subsec54} and use the lemma on almost eigenvalues  which we introduce in Section~\ref{subsec51}. They rely on the construction of approximations to eigenvalues and eigenfunctions which is done in Sections~\ref{subsec52} and \ref{subsec53}.

\subsection{The abstract setting}\label{subsec51}

We first reformulate the spectral problem \eqref{(11)}--\eqref{(14)} in terms of operators on Hilbert spaces, cf. \eqref{(J3)}. In the space
$H^{1,\eta}_{per}(\varpi^\varepsilon)$ we consider the scalar product
\begin{equation}\begin{array}{c}\displaystyle
\langle U^\varepsilon,V^\varepsilon\rangle_{\varepsilon\eta}=(\nabla_xU^\varepsilon,\nabla_xV^\varepsilon)_{\varpi^\varepsilon}+
( U^\varepsilon,V^\varepsilon)_{\varpi^\varepsilon}
\end{array}\label{(J1)}\end{equation}
and the positive,
compact and symmetric
operator ${\cal B}^\varepsilon(\eta)$,
\begin{equation}\begin{array}{c}\displaystyle
\langle {\cal B}^\varepsilon(\eta)U^\varepsilon,V^\varepsilon\rangle_{\varepsilon\eta}=
( U^\varepsilon,V^\varepsilon)_{\varpi^\varepsilon}\quad\forall U^\varepsilon,V^\varepsilon
\in H^{1,\eta}_{per}(\varpi^\varepsilon).
\end{array}\label{(J2)}\end{equation}
The space $H^{1,\eta}_{per}(\varpi^\varepsilon)$ equipped with the
scalar product \eqref{(J1)} is denoted by ${\cal
H}^\varepsilon(\eta)$ and $\|U^\varepsilon;{\cal
H}^\varepsilon(\eta)\|$ {\color{black}denotes the norm} generated by
\eqref{(J1)}.

Comparing \eqref{(J1)}, \eqref{(J2)} with \eqref{(16)}, we see that
the variational formulation of the problem
\eqref{(11)}--\eqref{(14)} is equivalent to the equation
\begin{equation}\begin{array}{c}\displaystyle
{\cal B}^\varepsilon(\eta)U^\varepsilon(\eta)=M^\varepsilon(\eta)
U^\varepsilon(\eta)\,\, \mbox{\rm in}\,\,{\cal H}^\varepsilon(\eta)
\end{array}\label{(J3)}\end{equation}
with the new spectral parameter
\begin{equation}\begin{array}{c}\displaystyle
M^\varepsilon(\eta)=(1+\Lambda^\varepsilon(\eta))^{-1}.
\end{array}\label{(J4)}\end{equation}

The following result  (a lemma on almost eigenvalues,  cf. \cite{ViLu}) is a  consequence of the
spectral decomposition of resolvent, cf.  \cite[Ch. 6]{BiSo}.

\begin{Lemma}\label{Lemma_Visik}
Let $M^\varepsilon_{as}(\eta)\in{\mathbb R}$
and $U^\varepsilon_{as}(\eta) \in{\cal
H}^\varepsilon(\eta)\setminus\{0\}$ verify the relationship
\begin{equation}\label{(J5)}
\|{\cal
B}^\varepsilon(\eta)U^\varepsilon_{as}(\eta)-M^\varepsilon_{as}(\eta)
U^\varepsilon_{as}(\eta);{\cal H}^\varepsilon(\eta)\|=\delta_\varepsilon\|
U^\varepsilon_{as}(\eta);{\cal H}^\varepsilon(\eta)\|.
\end{equation}
Then, there exists an eigenvalue $M^\varepsilon(\eta)$ of the
operator ${\cal B}^\varepsilon(\eta)$ such that
$$
\vert M^\varepsilon(\eta)-M^\varepsilon_{as}(\eta)\vert\leq\delta_\varepsilon.
$$
\end{Lemma}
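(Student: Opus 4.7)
The plan is to apply the spectral theorem to the operator $\mathcal{B}^\varepsilon(\eta)$ and then carry out a Parseval-type argument that reduces the estimate to an elementary minimisation inequality.

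First I would record the structural properties of $\mathcal{B}^\varepsilon(\eta)$ that make this possible: it is defined by \eqref{(J2)} as a bounded, symmetric, positive operator on $\mathcal{H}^\varepsilon(\eta)$; compactness follows from the compact embedding $H^1(\varpi^\varepsilon)\hookrightarrow L^2(\varpi^\varepsilon)$ and the way the $L^2$ inner product appears on the right-hand side of \eqref{(J2)}. Consequently, the Hilbert--Schmidt theorem produces a (possibly finite, but generically countable) family of positive eigenvalues $\{M_k^\varepsilon(\eta)\}_{k\in\mathbb{N}}$, accumulating only at $0$, together with an orthonormal (in the $\langle\cdot,\cdot\rangle_{\varepsilon\eta}$ sense) basis of eigenvectors $\{\Phi_k^\varepsilon(\eta)\}_{k\in\mathbb{N}}$ of $\mathcal{H}^\varepsilon(\eta)$, with $\mathcal{B}^\varepsilon(\eta)\Phi_k^\varepsilon(\eta)=M_k^\varepsilon(\eta)\Phi_k^\varepsilon(\eta)$.

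Next I would expand the almost eigenfunction in this basis: write
\[
U^\varepsilon_{as}(\eta)=\sum_{k}c_k\,\Phi_k^\varepsilon(\eta),\qquad c_k=\langle U^\varepsilon_{as}(\eta),\Phi_k^\varepsilon(\eta)\rangle_{\varepsilon\eta},
\]
so that $\|U^\varepsilon_{as}(\eta);\mathcal{H}^\varepsilon(\eta)\|^2=\sum_k|c_k|^2$. Applying $\mathcal{B}^\varepsilon(\eta)-M^\varepsilon_{as}(\eta)\mathrm{Id}$ term by term and using the orthonormality of the $\Phi_k^\varepsilon(\eta)$ gives
\[
\bigl\|\mathcal{B}^\varepsilon(\eta)U^\varepsilon_{as}(\eta)-M^\varepsilon_{as}(\eta)U^\varepsilon_{as}(\eta);\mathcal{H}^\varepsilon(\eta)\bigr\|^2=\sum_{k}\bigl|M_k^\varepsilon(\eta)-M^\varepsilon_{as}(\eta)\bigr|^2|c_k|^2.
\]
Combined with the hypothesis \eqref{(J5)}, this yields
\[
\sum_{k}\bigl|M_k^\varepsilon(\eta)-M^\varepsilon_{as}(\eta)\bigr|^2|c_k|^2=\delta_\varepsilon^{\,2}\sum_{k}|c_k|^2.
\]

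The final step is an argument by contradiction: if the conclusion failed, then $|M_k^\varepsilon(\eta)-M^\varepsilon_{as}(\eta)|>\delta_\varepsilon$ for every eigenvalue $M_k^\varepsilon(\eta)$ of $\mathcal{B}^\varepsilon(\eta)$, and the above identity would give $\sum_k|c_k|^2>\sum_k|c_k|^2$, which is impossible because $U^\varepsilon_{as}(\eta)\neq 0$ forces $\sum_k|c_k|^2>0$. Hence some eigenvalue $M^\varepsilon(\eta)$ of $\mathcal{B}^\varepsilon(\eta)$ satisfies $|M^\varepsilon(\eta)-M^\varepsilon_{as}(\eta)|\leq\delta_\varepsilon$, which is the claim. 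There is no real obstacle here; the only subtlety is making sure that the orthonormal system in the $\langle\cdot,\cdot\rangle_{\varepsilon\eta}$ inner product really exhausts $\mathcal{H}^\varepsilon(\eta)$ (since $\ker\mathcal{B}^\varepsilon(\eta)=\{0\}$ thanks to positivity on the coercive form \eqref{(J2)}, so no zero-eigenspace has to be added to the span).
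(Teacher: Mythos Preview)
Your argument is correct and is precisely the standard spectral-decomposition proof that the paper alludes to (the paper does not spell out a proof, merely citing \cite{ViLu} and the spectral decomposition of the resolvent in \cite[Ch.~6]{BiSo}). One cosmetic slip: in the contradiction step the displayed inequality should read $\delta_\varepsilon^{2}\sum_k|c_k|^2>\delta_\varepsilon^{2}\sum_k|c_k|^2$ rather than $\sum_k|c_k|^2>\sum_k|c_k|^2$, but the intended reasoning is clear and sound.
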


In Sections~\ref{subsec52} and \ref{subsec53} below, we provide $M^\varepsilon_{as}(\eta)$ and
$U^\varepsilon_{as}(\eta)$ and obtain a bound for  the rest $\delta_\varepsilon$ in \eqref{(J5)}.

\subsection{Approximate eigenvalue and eigenfunction}\label{subsec52}

Let $\Lambda^0_\pm(\eta)=({\color{black}\eta}\pm2\pi)^2$ be  \textcolor{black}{eigenvalues} in
\eqref{(31)} corresponding to a fixed Floquet parameter
$\eta\in[-\pi,\pi]$. According to \eqref{(J4)} we take
\begin{equation}\begin{array}{c}\displaystyle
M_\pm^0(\eta)=(1+\Lambda^0_\pm(\eta))^{-1}
\end{array}\label{(un1)}\end{equation}
as an approximate eigenvalue ($\pm$ respectively), and
\begin{equation}\label{(un2)}
U^\varepsilon_\pm(x;\eta)=X^\varepsilon(x_1)
U^0_\pm(x_1;\eta)+ (1-X^\varepsilon(x_1))\Big(U^0_\pm(0;\eta)+x_1
\frac{\partial U^0_\pm}{\partial x_1}(0;\eta)\Big)+
\varepsilon\chi_0(x_1)\frac{\partial U^0_\pm}{\partial x_1}(0;\eta)
W^1_0\Big(\frac{x}{\varepsilon}\Big),
\end{equation}
as an approximate  eigenfunction constructed from the asymptotic expansions in Section~\ref{sec4} (cf. \eqref{(W1)}, \eqref{(W4)}, \eqref{(W6)} and \eqref{(W9)} which holds for $\eta\in[-\pi,\pi]$).
$W^1_0$ is the bounded harmonics in $\Xi$, see \eqref{(39)} and  \eqref{(W100)},
\begin{equation}\begin{array}{c}\displaystyle
U^0_\pm(x_1;\eta)=e^{i(\eta\pm\textcolor{black}{2}\pi)x_1},
\end{array}\label{(un3)}\end{equation}
\begin{eqnarray}\!\!\displaystyle
\hspace{-1.4cm}&&X^\varepsilon(x_1)=1\!-\!\chi_+(x_1/\varepsilon)\!-\!\chi_-(x_1/\varepsilon), \mbox{ i.e. }
X^\varepsilon(x_1)=1\,\,\mbox{\rm for }\vert x_1\vert\geq 2R\varepsilon,\,\,
X^\varepsilon(x_1)=0\,\,\mbox{\rm for }\vert x_1\vert\leq R\varepsilon, \nonumber
\vspace{0.15cm}\\
\hspace{-1.4cm}&&\displaystyle
\chi_0\in C^\infty({\mathbb R}),\quad \chi_0(x_1)=1\,\,\mbox{\rm
for }\vert x_1\vert\leq 1/6,\,\, \chi_0(x_1)=0\,\,\mbox{\rm
for }\vert x_1\vert\geq 1/3, \label{(ra3)}
\end{eqnarray}
where the even smooth cut-off functions $\chi_{\pm}$ are defined by \eqref{(chi)}.
Notice that,  {\color{black} for $0<\vert\eta\vert<\pi$}, $\Lambda^0_\pm(\eta)$ is a simple eigenvalue so that it
corresponds to the only eigenfunction \eqref{(un3)}, see
\eqref{(31)} with $j=\pm1$ and $k=0$, so that the sign plus or minus
is fixed in these formulas.

\subsection{Estimating the discrepancy}\label{subsec53}

The function \textcolor{black}{\eqref{(un2)}}
satisfies the Neumann condition \eqref{(14)} as well as the
quasi-periodicity conditions \eqref{(12)}, \eqref{(13)}. To conclude
these  {\color{black} assertions}, we recall \eqref{(J8)}  {\color{black} and \eqref{(40)}}, and observe that
$X^\varepsilon(x_1)=1$ and $\chi_0(x_1)=0$ near the points
$x_1=\pm1/2$.

In order to apply Lemma~\ref{Lemma_Visik},  we multiply \eqref{(J5)}  by $ \|
U^\varepsilon_{as}(\eta);{\cal H}^\varepsilon(\eta)\|^{-1}$ and  obtain the relation
\begin{equation}\label{(un4)}
\begin{array}{rl}
\delta^\varepsilon_\pm(\eta):=\!&\!\|U_\pm^\varepsilon;{\cal
H}^\varepsilon(\eta)\|^{-1}\|{\cal
B}^\varepsilon(\eta)U_\pm^\varepsilon-M^0_\pm(\eta)
U_\pm^\varepsilon;{\cal H}^\varepsilon(\eta)\|\vspace{0.1cm}\\
=\!&\!\|U_\pm^\varepsilon;{\cal H}^\varepsilon(\eta)\|^{-1} M^0_\pm(\eta) \,\sup\vert(\nabla_x
U^\varepsilon_\pm,\nabla_x V^\varepsilon)_{\varpi^\varepsilon}-\Lambda^0_\pm(\eta)(
U^\varepsilon_\pm,V^\varepsilon)_{\varpi^\varepsilon}\vert \vspace{0.1cm}\\
 =\!&\!\|U_\pm^\varepsilon;{\cal
H}^\varepsilon(\eta)\|^{-1} M^0_\pm(\eta) \, \sup\vert(\Delta_x
U^\varepsilon_\pm+\Lambda^0_\pm(\eta))
U^\varepsilon_\pm,V^\varepsilon)_{\varpi^\varepsilon}\vert.
\end{array}
\end{equation}
Here,   the supreme is computed over all function $V^\varepsilon\in{\cal
H}^\varepsilon(\eta)$ with unit norm and this calculation takes into
account definitions \eqref{(J1)}, \eqref{(J2)}, {\color{black}\eqref{(un1)}} and the Green formula
together with the Neumann and quasi-periodicity conditions for
$U^\varepsilon_\pm$ and the Neumann and periodicity conditions for $W_0^1$, \eqref{(35)} and \eqref{(40)}, respectively.
Let us show the estimate
\begin{equation}\label{deltacota} \delta^\varepsilon_\pm(\eta)\leq c\varepsilon \quad \mbox{for } \varepsilon \leq \varepsilon_0, \end{equation} with some constants $c$ and $\varepsilon_0$
independent of $\eta$.

Indeed,  by  \eqref{(un2)}, we write
\begin{equation}\label{(un5)}
\Delta_x U_\pm^\varepsilon(x;\eta)+\Lambda^0_\pm(\eta)
{\color{black}U_\pm^\varepsilon}(x;\eta)=:\sum\limits_{j=1}^{{\color{black}6}}S^\varepsilon_{j,\pm}(x;\eta)
\end{equation}
with
\begin{eqnarray*}
&&S^\varepsilon_{1,\pm}(x;\eta)= X^\varepsilon(x_1)(\Delta_xU^0_\pm(x_1;\psi)+\Lambda^0_\pm(\eta)U^0_\pm(x_1;\psi)),\\
&&S^\varepsilon_{2,\pm}(x;\eta)= [\Delta_x,X^\varepsilon(x_1)]\Big(U^0_\pm(x_1;\eta)-U^0_\pm(0;\eta)-x_1
\frac{\partial U^0_\pm}{\partial x_1}(0;\eta)\Big),\\
&&S^\varepsilon_{3,\pm}(x;\eta)= \frac{1}{\varepsilon}\frac{\partial U^0_\pm}{\partial
x_1}(0;\eta)\chi_0(x_1)\Delta_\xi W^1_0(\xi), \\
&&S^\varepsilon_{4,\pm}(x;\eta)= \varepsilon\Lambda^0_\pm(\eta)\chi_0(x_1)\frac{\partial
U^0_\pm}{\partial x_1}(0;\eta)W^1_0(\xi),
\\&&  S^\varepsilon_{5,\pm}(x;\eta)=\varepsilon\frac{\partial U^0_\pm}{\partial
x_1}(0;\eta)[\Delta_x,\chi_0(x_1)]W^1_0(\xi),
\\
&& S^\varepsilon_{6,\pm}(x;\eta)= \Lambda^0_\pm(\eta) (1-X^\varepsilon(x_1))\Big(U^0_\pm(0;\eta)+x_1
\frac{\partial U^0_\pm}{\partial x_1}(0;\eta)\Big);
\end{eqnarray*}
here, $[\Delta_x, \chi]$ stands for the commutator, i.e.
$[\Delta_x, \chi]U:=\Delta_x (\chi U)- \chi \Delta_x U$.
Note that $[\Delta_x, \chi]U=2 \nabla_x \chi \cdot \nabla_x U+ U \Delta_x \chi $ and $[\Delta_x,1-X^\varepsilon(x_1)]=-[\Delta_x,X^\varepsilon(x_1)]$.

We have $S^\varepsilon_{1,\pm}(x;\eta)=0$ and
$S^\varepsilon_{3,\pm}(x;\eta)=0$ because of the equations \eqref{(11)}
for $U^0_\pm$ and \eqref{(34)} for $W^1_0$. Since $W^1_0$ admits the
representation \eqref{(W100)} and, therefore, is bounded together
with its derivative, we conclude that
\begin{eqnarray}
\!\!\!\!\vert(S^\varepsilon_{4,\pm},V^\varepsilon)_{\varpi^\varepsilon}\vert\leq
c\,\varepsilon\sup\limits_{\xi\in\Xi}\vert W^1_0(\xi)\vert\,\|V^\varepsilon;L^2(\varpi^\varepsilon)\|\leq
C\,\varepsilon \textcolor{black}{\,\|V^\varepsilon;L^2(\varpi^\varepsilon)\|}. \quad
\label{(un61)}
\end{eqnarray}
Moreover, since the coefficients in the commutator $[\Delta_x,\chi_0({\color{black}x_1})]$ do not depend on $\varepsilon$
and have  their supports in the union of the rectangles $\Upsilon^0_\pm=\{x:\,\pm x_1\in[1/6,1/3],x_2\in[0,H]\}$, and $\nabla_\xi W_0^1$ has an exponential decay, we have
\begin{equation}\label{(un62)}
\vert(S^\varepsilon_{5,\pm},V^\varepsilon)_{\varpi^\varepsilon}\vert
\leq \,  c\,\varepsilon\sup\limits_{\xi\in\Xi}(\vert W^1_0(\xi)\vert+ \vert\xi_1\vert \vert\nabla_{\xi_1}
W^1_0(\xi)\vert)\,\|V^\varepsilon;L^2(\varpi^\varepsilon)\|
\leq \, C\,
\varepsilon \,\|V^\varepsilon;L^2(\varpi^\varepsilon)\| .
\end{equation}

On the other hand, owing to \eqref{(ra3)}, the support of {\color{black}$S^\varepsilon_{2,\pm}$ belongs to
the union of the thin rectangles ${\color{black}\Upsilon^\varepsilon_\pm}=\{x:\pm
x_1\in[R\varepsilon,2R\varepsilon],x_2\in[0,H]\}$ and the
coefficient of the derivative and the free coefficient in the
commutator
$$
[\Delta_x,X^\varepsilon(x_1)](\cdot)=2\frac{\partial
X^\varepsilon}{\partial x_1}(x_1)\frac{\partial (\cdot)}{\partial x_1}
+\Delta_x X^\varepsilon(x_1)(\cdot)
$$
are of order $\varepsilon^{-1}$ and $\varepsilon^{-2}$ respectively.
{\color{black}Besides, the} inequality
$$
\|V^\varepsilon;L^2(\Upsilon^\varepsilon_\pm)\|\leq
c\varepsilon^{1/2}\|V^\varepsilon;\textcolor{black}{\Hh^\varepsilon(\eta)}\|
$$
is valid, see for example the proof of \eqref{(KK3)} and \eqref{(KK4)}. Thus, based on the Taylor
formula for $U^0_\pm$, we see that
\begin{equation}\label{(un6)}
\vert(S^\varepsilon_{2,\pm},V^\varepsilon)_{\varpi^\varepsilon}\vert\leq \,
c\sum\limits_\pm\vert\Upsilon^\varepsilon_\pm\vert^{1/2}
\max\limits_{x\in\Upsilon^\varepsilon_\pm}\bigg\vert \frac{\partial^2
U^{0}_\pm}{\partial
x_1^2}(x_1;{\color{black}\eta})\bigg\vert\,\|V^\varepsilon;L^2(\Upsilon^\varepsilon_\pm)\|
\leq \,  c\varepsilon \, \textcolor{black}{\|V^\varepsilon;\Hh^\varepsilon(\eta)\|}.
\end{equation}
Similarly, since the support of $S_{6,\pm}^\varepsilon$ is included in $\Theta^\varepsilon=[-2R\varepsilon ,2R\varepsilon]\times [0,H]$, we have
\begin{equation}\begin{array}{c}\displaystyle
\vert(S^\varepsilon_{6,\pm},V^\varepsilon)_{\varpi^\varepsilon}\vert\leq C \vert\Theta^\varepsilon\vert^{1/2} \|V^\varepsilon;\!L^2(\Theta^\varepsilon)\| \leq C\,
\varepsilon \,\|V^\varepsilon;\Hh^\varepsilon(\eta)\| .
\end{array}\label{(un63)}\end{equation}

Finally, by definition of $U_\pm^\varepsilon$ (see \eqref{(un2)} and \eqref{(un3)}), it can be proved that
\begin{equation}\label{(un64)}
\|U_\pm^\varepsilon;{\cal H}^\varepsilon(\eta)\|^2 \lie \|U_\pm^0;L^2(\varpi^0)\|^2+ \|\nabla_x U_\pm^0;L^2(\varpi^0)\|^2=(1+\Lambda_\pm^0(\eta))H.
\end{equation}

Based on the representation \eqref{(un4)}, {\color{black} \eqref{(un5)},} the estimates
\eqref{(un61)}, \eqref{(un62)}, {\color{black} \eqref{(un6)} and \eqref{(un63)},  and the convergence \eqref{(un64)}}, we arrive at   \eqref{deltacota}.

\subsection{Asymptotics of the eigenvalues}\label{subsec54}

Considering {\color{black} the estimate \eqref{deltacota}},  Lemma~\ref{Lemma_Visik} gives us an eigenvalue
$M^\varepsilon_\pm(\eta)$ of the operator ${\cal
B}^\varepsilon(\eta)$ such that
\begin{equation}\begin{array}{c}\displaystyle
\vert M^\varepsilon_\pm(\eta)-M^0_\pm(\eta)\vert\leq c\varepsilon
\end{array}\label{(un7)}\end{equation}
where the factor $c$ is independent of $\eta$. Recalling
\eqref{(J4)}, we derive from \eqref{(un7)} that
\begin{equation} \displaystyle
\vert\Lambda^\varepsilon_\pm(\eta)-\Lambda^0_\pm(\eta)\vert\leq
c\varepsilon(1+
\Lambda^0_\pm(\eta))(1+\Lambda^\varepsilon_\pm(\eta)),
\label{(un8)}\end{equation}
and, hence
$$
(1+\Lambda^\varepsilon_\pm(\eta))(1-c\varepsilon(1+
\Lambda^0_\pm(\eta))\leq 1+\Lambda^0_\pm(\eta).$$
Let us set
$$
\varepsilon_0:=\frac{1}{2c(1+4\pi^2)}.
$$
Then, for $\varepsilon<\varepsilon_0$ and $\eta\in [-\pi,\pi]$, we have $(1-c\varepsilon(1+\Lambda^0_\pm(\eta))\!>\!1/2$ and therefore
\begin{equation}\begin{array}{c}\displaystyle
\vert\Lambda^\varepsilon_\pm(\eta)-\Lambda^0_\pm(\eta)\vert\leq
2c\varepsilon(1+ \Lambda^0_\pm(\eta))^2 \leq 2c\varepsilon (1+9\pi^2)^2=:C_0\varepsilon.
\end{array}\label{(un10)}\end{equation}
This ends the proof of \eqref{VisikLpm}.

In a similar way, replacing $\Lambda_\pm^0(\eta)$ and $U_\pm^0(x_1;\eta)$ by  $\Lambda_1^0(\eta)=\eta^2$ and $U_1^0(x_1;\eta)=e^{i\eta x_1}$ respectively  in \eqref{(un1)} and \eqref{(un2)}, we obtain some constants $\varepsilon_0, C_0 >0$ and certain eigenvalues $\Lambda_\star^\varepsilon(\eta)$ of \eqref{(11)}--\eqref{(14)} which satisfy \eqref{VisikL1}.
Thus, the proof of Theorem~\ref{Th5.1} is completed.

\medskip

Finally, on account of \eqref{EstL2}, there cannot be more than one eigenvalue $\Lambda_p^\varepsilon(\eta)$ in the box $[-\pi+\delta, \pi-\delta]\times [0,\pi^2+K_1]$ for any $\delta>0$ and $K_1$ defined by \eqref{def_K1yK2}, and hence we can identify the eigenvalue  $\Lambda_\star^\varepsilon(\eta)$ given in Proposition~\ref{Th5.1} with the first eigenvalue $\Lambda_1^\varepsilon(\eta)$ at a  distance $\delta$  from $\eta_{\mbox{\tiny$\square$}}= \pm\pi$ and  Corollary~\ref{Th5.2} holds.

\section{Asymptotic analysis near nodes}\label{sec6}

\medskip

The main difference between the asymptotic analysis in the previous and the
next sections is that in what follows the limit eigenvalue under consideration is
always multiple and gives rise to a node
of the dispersion curves in {\color{black}Figure~\ref{fig2}} a)--b).
Furthermore, examining the splitting of the band edges and the opening
of spectral gaps requires much more precise  asymptotic formulas for the eigenvalues in
\eqref{(17)} which are valid in a neighborhood of a certain value of the Floquet
parameter $\eta$. This seriously complicates the asymptotic
analysis as well as the justification procedure.
In fact, the asymptotic analysis is somehow double, since it takes into account the small parameter and the small neighborhood of the nodes
$(\eta_{\mbox{\small{$\circ$}}},\Lambda_{\mbox{\small{$\circ$}}})=(0,4\pi^2)$ and $(\eta_{\mbox{\tiny$\square$}}, \Lambda_{\mbox{\tiny$\square$}})= (\pm\pi,\pi^2).$
In  Sections~\ref{subsec61}--\ref{subsec63}, we perform all the computations for the node $(0,4\pi^2)$ while, for the sake of brevity, we sketch the main changes for the nodes  $(\pm\pi,\pi^2),$ cf. Section~\ref{subsec64}. Section~\ref{subsec61} contains the asymptotic analysis based on asymptotic expansions while Sections~\ref{subsec62}--\ref{subsec63} contain a justification scheme for the abstract formulation in Section~\ref{subsec51}.

\subsection{The node $(\eta_{\mbox{\small{$\circ$}}},\Lambda_{\mbox{\small{$\circ$}}})=(0,4\pi^2)$   for $H\in(0,1/2)$}\label{subsec61}

This node marked with $\circ$ occurs in Figure~\ref{fig2}} a) (cf.~also Figure~\ref{fig_new}) under the assumption $H\in(0,1/2)$ as the intersection point of the two (plus and
minus) limit dispersion curves
\begin{equation}\begin{array}{c}\displaystyle
\Lambda_\pm^0(\eta)=(\eta\pm2\pi)^2,\,\,\eta\in[-\pi,\pi].
\end{array}\label{(K1)}\end{equation}
The problem \eqref{(29)}--\eqref{(25)} with $\eta=0$ has the
eigenvalue $\Lambda^0:=\Lambda_2^0(0)=\Lambda_3^0(0)=4\pi^2$ of multiplicity
$2$ with the eigenfunctions
\begin{equation}\begin{array}{c}\displaystyle
U^0_\pm(x)=e^{\pm2\pi ix_1}.
\end{array}\label{(K2)}\end{equation}

\medskip

To investigate the perturbed dispersion curves \eqref{(18)} with $p=2,3$ near
the point $(\eta_{\mbox{\small{$\circ$}}},\Lambda_{\mbox{\small{$\circ$}}})=(0,4\pi^2)$,  we use the idea in \cite{na453} by introducing the rapid Floquet variable
\begin{equation}\begin{array}{c}\displaystyle
\psi=\varepsilon^{-1}\eta
\end{array}\label{(KK0)}\end{equation}
in a neighborhood of $\eta=0$, and perform the asymptotic ansatz for the eigenvalues
\begin{equation}\begin{array}{c}\displaystyle
\Lambda^\varepsilon_p(\eta)=\Lambda^0+\varepsilon\Lambda^\prime(\psi)+\varepsilon^2
\Lambda^{\prime\prime}(\psi)+\dots
\end{array}\label{(K3)}\end{equation}
with $p=2,3$ as in Figure~\ref{fig3} a). To shorten the notation, we do not
display the index $p$ in the terms of the anz\"{a}tze.

We assume the outer expansion for the corresponding eigenfunction

\begin{equation}\begin{array}{c}\displaystyle
U^\varepsilon(x;\!\eta)=U^0(x;\!\psi)+\varepsilon
U^\prime(x;\!\psi)+\varepsilon^2 U^{\prime\prime}(x;\!\psi)+\dots
\end{array}\label{(K4)}\end{equation}
to be valid in $\varpi^0\setminus\varsigma$, where
\begin{equation}\begin{array}{c}\displaystyle
U^0(x;\psi)=a_+(\psi)e^{+2\pi ix_1}+a_-(\psi)e^{-2\pi ix_1},
\end{array}\label{(K5)}\end{equation}
$\psi$ is a parameter, $\psi=O(1)$, and $a(\psi)=(a_+(\psi),a_-(\psi))$ is a column vector in ${\mathbb C}^2$
to be determined together with the correction terms
$\Lambda^\prime(\psi)$ and $U^\prime(\cdot;\psi)$ in the ans\"{a}tze
\eqref{(K3)} and  \eqref{(K4)}, respectively.
We follow the technique developed in Sections~\ref{subsec41}--\ref{subsec44} and we only outline the main differences.
As in Section~\ref{sec4}, the terms
$\Lambda^{\prime\prime}(\psi)$ in \eqref{(K3)} and
$U^{\prime\prime}(x;\psi)$ in \eqref{(K4)} are not of further use.

We look for an inner expansion in the vicinity of the transversal perforation string \eqref{(32)}
\begin{equation}\begin{array}{c}\displaystyle
U^\varepsilon(x;\eta)=w^0(x_2;\psi)+\varepsilon
w^\prime(\xi;\psi) +\varepsilon^2
w^{\prime\prime}(\xi;\psi)+\dots,
\end{array}\label{(K6)}\end{equation}
where we have assumed that the main term $w^0$ does not depend on $\xi=\varepsilon^{-1}x$ while the functions arising in further terms, $ w^\prime $ and $w^{\prime\prime}$ satisfy a  periodicity condition in the $\xi_2$-direction.
Following the scheme in Section~\ref{sec4}, the immediate result of the
matching procedure at the first order is
\begin{equation}\begin{array}{c}\displaystyle
w^0(x_2;\psi)=U^0(0;\psi)=(a_+(\psi)+a_-(\psi))W^0=a_+(\psi)+a_-(\psi),
\end{array}\label{(K7)}\end{equation}
cf.~\eqref{(W6)} and  \eqref{W0}.
Since the main term \eqref{(K6)} is independent of the transversal
variable, the dependence on $x_2$ (not on $\xi_2$!) disappear in all
terms and we will write the argument $x_1$ instead of $x$ on the
right-hand side of \eqref{(K4)} and omit $x_2$  on the right-hand
side of \eqref{(K6)}.

We continue with the matching procedure at the second order taking into account the Taylor expansion for \eqref{(K4)}, cf.~\eqref{(W5)}.
The Taylor formula applied to \eqref{(K5)} gives
\begin{equation}\label{(K8)}
\displaystyle
U^0(x;\psi)= a_+(\psi)+a_-(\psi)+2\pi
ix_1(a_+(\psi)-a_-(\psi)) -\textcolor{black}{2}\pi^2
 x_1^2(a_+(\psi)+a_-(\psi))+O(\vert x_1\vert^3),
\end{equation}
where $O(\vert x_1\vert^3)$ depends on $\psi$, and, recalling the solution \eqref{(39)} of the problem \eqref{(34)}, \eqref{(35)}, \eqref{(36)}, cf.~\eqref{(W9)}, we set
\begin{equation}\begin{array}{c}\displaystyle
w^\prime(\xi;\psi)=2\pi i
(a_+(\psi)-a_-(\psi))W^1(\xi)+a^\prime(\psi)W^0
\end{array}\label{(K9)}\end{equation}
with some factor $a^\prime(\psi)$ which can be fixed arbitrarily at
the present stage of our analysis. In contrast to \textcolor{black}{\eqref{(W9)}} the
solution $W^2$ is absent in \eqref{(K9)}.
Thus, the first jump condition for the correction term in \eqref{(K4)} is (cf.~\eqref{(W5)}, \eqref{(W11)} and \eqref{(K8)}):
 \begin{equation}\begin{array}{c}\displaystyle
[U^\prime]_0(\psi)=2\frac{\partial U^0}{\partial
x_1}(0;\!\psi)m_1(\Xi)=4\pi i(a_+(\psi)-a_-(\psi)) m_1(\Xi).
\end{array}\label{(K12)}\end{equation}
This formula coincides with \eqref{(W11)} because $\partial_{x_2}
U^0\!=\!0,$ and it is independent of $x_2$.

The matching procedure at level $\varepsilon^2$, in the same way as in Section~\ref{subsec43}, gives
$$
w^{\prime\prime}(\xi;\psi)=4\pi^2(a_+(\psi)+a_-(\psi))W^3(\xi)+a^{\prime\prime}(\psi)W^0+ \widetilde{w}^{\,\prime\prime}(\xi;\psi)
$$
where $W^3$ is the solution \eqref{(SS2)} of the problem \eqref{(S1)}, \eqref{(35)}, \eqref{(36)}, $a^{\prime\prime}(\psi)$ is some factor which can be fixed arbitrarily at
the present stage of our analysis and the remainder $\widetilde{w}^{\,\prime\prime}$ gets the
exponential decay as $\xi_1\to\pm\infty$ (cf.~\eqref{(S15)}, \eqref{(S16)} and \eqref{(S17)}). Besides,
the second jump condition \eqref{(W11N)} now takes the simplified
form (cf.~\eqref{(W11N)} and \eqref{(K8)})
 \begin{equation}\begin{array}{c}\displaystyle
\Big[\frac{\partial U^\prime}{\partial
x_1}\Big]_0\textcolor{black}{(\psi)}=\textcolor{black}{-}\frac{\vert\omega\vert}{H}\frac{\partial^2
U^0}{\partial x^2_1}({\color{black}0};\psi)=\textcolor{black}{4}\pi^2
(a_+(\psi)+a_-(\psi))\frac{\vert\omega\vert}{H}.
 \end{array}\label{(K12N)}\end{equation}

Other restrictions on $U^\prime$ are readily inherited from \eqref{(11)}, \eqref{(14)}
and \eqref{(K3)}, cf. Section~\ref{subsec41}:
 \begin{equation}\begin{array}{c}
-\Delta_x U^\prime(x;\psi)-\Lambda^0 U^\prime(x;\psi)=\Lambda^\prime(\psi)U^0(x;\psi),
\,\, x\in\varpi^0\setminus\varsigma,
\\\\\displaystyle
\frac{\partial U^\prime}{\partial x_2}(x_1,0;\psi)=
\frac{\partial U^\prime}{\partial x_2}(x_1,H;\psi)=0,\,\,
x_1\in\Big(-\frac{1}{2},0\Big)\cup\Big(0,\frac{1}{2}\Big).
\end{array}\label{(K13)}\end{equation}
In the quasi-periodicity conditions it is also necessary to take
into account the fast Floquet parameter \eqref{(KK0)} and the Taylor
formula
 \begin{equation}\begin{array}{c}
e^{i\eta}=e^{i\varepsilon\psi}=1+i \varepsilon\psi+O(\varepsilon^2).
\end{array}\label{(K99)}\end{equation}
In this way, inserting \eqref{(K4)} into \eqref{(12)}, \eqref{(13)}, collecting terms of order $\varepsilon$ and using \eqref{(K5)} yield
\begin{equation}\label{(K14)}\begin{split}\displaystyle
&U^\prime\Big(\frac{1}{2},x_2;\psi\Big)\!-\!U^\prime\Big(\!\!-\frac{1}{2},x_2;\psi\Big)\!=\!i\psi
U^0\Big(\!\!-\frac{1}{2},x_2;\psi\Big)\!=\!-i\psi(a_+(\psi)+a_-(\psi)), \vspace{0.3cm}\\
&\displaystyle
\frac{\partial U^\prime}{\partial
x_1}\Big(\frac{1}{2},x_2;\psi\Big)\!-\! \frac{\partial
U^\prime}{\partial x_1}\Big(\!\!-\frac{1}{2},x_2;\psi\Big)\! =\! i\psi
\frac{\partial U^0}{\partial
x_1}\Big(\!\!-\frac{1}{2},x_2;\psi\Big)\!=\!2\pi\psi(a_+(\psi)\!-\!a_-(\psi)).
\end{split}\end{equation}

The problem \eqref{(K12)},  \eqref{(K12N)}, \eqref{(K13)},
\eqref{(K14)} has two compatibility conditions which can be derived by
multiplying the partial differential equations in \eqref{(K13)} by  the eigenfunctions
\eqref{(K2)} and applying the Green formula on $\varpi^0\setminus\varsigma$.
Thus, we have
\begin{equation}\label{(K15)}
\begin{split}
\Lambda^\prime(\psi)Ha_\pm(\psi)&=-\int\limits_{\varpi^0}
\overline{e^{\pm2\pi ix_1}}\left(\Delta_x U^\prime(x;\psi)+\Lambda^0 U^\prime(x;\psi)\right)dx\\
&=\int_0^H \!\!\Big(
\frac{\partial U^\prime}{\partial x_1}(x;\psi)\pm 2\pi
iU^\prime(x;\psi)\Big)\bigg\vert_{x_1=-\frac{1}{2}}^{\frac{1}{2}} \! dx_2+ \!
\int_0^H \!\! \Big[
\frac{\partial U^\prime}{\partial x_1}(x;\psi)\pm 2\pi
iU^\prime(x;\psi)\Big]_0 \!dx_2.
\end{split}
\end{equation}
Notice that the factor $H$ on the left-hand side is due to the formula
 \begin{equation}\nonumber\begin{array}{c}\displaystyle
 \|e^{\pm2 i\pi x_1};L^2(\varpi^0)\|=H^{1/2}.
\end{array}\label{(luc000)}\end{equation}
Using the inhomogeneous data in  \eqref{(K12)}, \eqref{(K12N)} and \eqref{(K14)},
we observe that the integrands are constants and reduce \eqref{(K15)} to
the system of two  linear algebraic equations with
the spectral parameter $\Lambda^\prime(\psi)$:
\begin{equation}\begin{array}{l}\displaystyle
\Lambda^\prime(\psi)a_+(\psi) =   \left( 4\pi^2\frac{\vert\omega\vert}{H}  - 8\pi^2m_1(\Xi) + 4\pi\psi \right) a_+(\psi)
+  \left( 4\pi^2\frac{\vert\omega\vert}{H} +  8\pi^2m_1(\Xi)  \right)  a_-(\psi),\vspace{0.3cm}\\
\displaystyle
\Lambda^\prime(\psi)a_-(\psi) = \left(\! 4\pi^2\frac{\vert\omega\vert}{H} +  8\pi^2m_1(\Xi) \right)  a_+(\psi) +  \left( 4\pi^2\frac{\vert\omega\vert}{H}  -  8\pi^2m_1(\Xi)-4\pi\psi\! \right) a_-(\psi).
\end{array}\label{(K16)}\end{equation}
The two eigenvalues of this system are
 \begin{equation}\begin{array}{c}\displaystyle
\textcolor{black}{\Lambda^{\prime}_\pm}(\psi)=4\pi\left(2\pi\textcolor{black}{
\Big(\frac{\vert\omega\vert}{2H}-m_1(\Xi)\Big)}\pm \sqrt{4\pi^2
\Big(m_1(\Xi)\textcolor{black}{+}\frac{\vert\omega\vert}{2H}\Big)^2+\psi^2}\right).
 \end{array}\label{(K17)}\end{equation}
In particular, we have $ {\color{black}\Lambda_-^{\prime}}(\psi)<0$ and ${\color{black} \Lambda_+^{\prime}}(\psi)>0$ because
 \begin{equation}
\Lambda^{\prime}_{-}(\psi)\leq -16\pi^2m_1(\Xi)\quad \hbox{ and }\quad  \Lambda^{\prime}_+(\psi)\geq 8\pi^2\frac{\vert\omega\vert}{H},
\label{(K17n)}\end{equation}
and $m_1(\Xi)>0$, see \eqref{(42)} and Remark~\ref{remarknueva} to compare.

In addition, the corresponding eigenvectors
$a^\pm(\psi)=(a^\pm_+(\psi),a^\pm_-(\psi))$ can also be easily computed. Finally, the compatibility conditions in the problem
\eqref{(K13)}, \eqref{(K12)}, \eqref{(K12N)}, \eqref{(K14)} are
satisfied and it has a solution $U^\prime(x;\psi)$ which is defined
up to a linear combination of the eigenfunctions \eqref{(K2)} but, in the sequel, it can be fixed orthogonal to them  and therefore become unique.
This condition determines all the terms in the asymptotic ans\"{a}tze
\eqref{(K3)}, \eqref{(K4)} and \eqref{(K6)}.

According to \eqref{(K17n)} we have $\Lambda^\prime_+(\psi)>
\Lambda^\prime_-(\psi)$, so that the eigenpair
$\{\Lambda^\prime_-(\psi),a^-(\psi)\}$ can be related to the eigenpair
$\{\Lambda^\varepsilon_2(\eta), U^\varepsilon_2(x;\psi)\}$ of the
problem \eqref{(11)}--\eqref{(14)} while
$\{\Lambda^\prime_+(\psi),a^+(\psi)\}$ does to
$\{\Lambda^\varepsilon_3(\eta),U^\varepsilon_3(x;\psi)\}$.

Now, we formulate our result on the splitting edges of the second and third limit spectral bands
giving rise to the open gap $\gamma_2^\varepsilon$ (cf.~Figure~\ref{fig3} a)). Its proof is in Sections~\ref{subsec62}--\ref{subsec63}.

\begin{Theorem}\label{Theorem_multiple1}
Let $H\in(0,1/2)$ and $\psi_0>0$. Then,  there
exist positive $\varepsilon_0=\varepsilon_0(H{\color{black},\psi_0})$ and $C=C(H{\color{black},\psi_0})$ such
that, for $\varepsilon\in(0,\varepsilon_0]$, the entries
$\Lambda^\varepsilon_2(\eta)$ and $\Lambda^\varepsilon_3(\eta)$ of
the eigenvalue sequence \eqref{(17)} with $\eta=\varepsilon\psi$, {\color{black}$\vert\psi \vert\leq \psi_0$,}
meet the estimates
\begin{equation}\nonumber\begin{array}{c}\displaystyle
\vert\Lambda^\varepsilon_3(\varepsilon\psi)-4\pi^2-\varepsilon{\color{black}\Lambda_+^{\prime}}(\psi)\vert\leq C\varepsilon^2,\\\\
\vert\Lambda^\varepsilon_2(\varepsilon\psi)-4\pi^2-\varepsilon{\color{black}\Lambda_-^{\prime}}(\psi)\vert\leq C\varepsilon^2,
\end{array}\label{(de12)}\end{equation}
where the quantities ${\color{black}\Lambda_\pm^{\prime}}(\psi)$ are given by \eqref{(K17)}.
\end{Theorem}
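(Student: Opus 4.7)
The plan is to apply the abstract lemma on almost eigenvalues (Lemma~\ref{Lemma_Visik}) to two carefully chosen quasi-modes that realize the eigenpairs $\{\Lambda'_\pm(\psi), a^\pm(\psi)\}$ of the $2\times 2$ system \eqref{(K16)}, and then to identify the resulting eigenvalues of $\mathcal{B}^\varepsilon(\varepsilon\psi)$ as the second and third ones in the sequence \eqref{(17)}. Throughout, all constants must be shown to be uniform in $|\psi|\le\psi_0$, which is the main technical issue.

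First I would construct, for each sign, an approximate eigenfunction in $H^{1,\varepsilon\psi}_{per}(\varpi^\varepsilon)$ by
\begin{equation*}
U^\varepsilon_{\pm,as}(x;\psi) = X^\varepsilon(x_1)\bigl(U^0(x;\psi,a^\pm) + \varepsilon U'(x;\psi,a^\pm)\bigr) + (1-X^\varepsilon(x_1))\,T^\varepsilon_\pm(x;\psi) + \varepsilon\,\chi_0(x_1)\,w'(x/\varepsilon;\psi,a^\pm),
\end{equation*}
where $U^0$ is given by \eqref{(K5)} with coefficients $a^\pm(\psi)$, $U'$ solves the transmission problem \eqref{(K13)}, \eqref{(K12)}, \eqref{(K12N)}, \eqref{(K14)} (with the normalization being orthogonal to $U^0_\pm$), $w'$ is given by \eqref{(K9)}, and $T^\varepsilon_\pm$ is the first-order Taylor polynomial of $U^0+\varepsilon U'$ at $x_1=0^\pm$ bridging the outer and inner expansions. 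The cut-offs $X^\varepsilon,\chi_0$ are as in \eqref{(ra3)}. Thanks to Lemma~\ref{Lemma_W} and the quasi-periodicity of $U^0$ and $U'$ built into \eqref{(K14)}, together with the exponential decay of the boundary-layer remainders, $U^\varepsilon_{\pm,as}$ actually belongs to $H^{1,\varepsilon\psi}_{per}(\varpi^\varepsilon)$.

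Next, as in Section~\ref{subsec53}, I would compute
\begin{equation*}
\bigl(\Delta_x + 4\pi^2 + \varepsilon\Lambda'_\pm(\psi)\bigr)U^\varepsilon_{\pm,as} = \sum_j S^\varepsilon_{j,\pm}(x;\psi),
\end{equation*}
where the dominant terms from the equations \eqref{(K13)} cancel by construction of $U^0,U'$, the Laplace/Poisson boundary-layer terms cancel by construction of $w'$ and $W^1$, and what remains are (i) commutator terms $[\Delta_x, X^\varepsilon]$ and $[\Delta_x,\chi_0]$ supported on thin strips of width $O(\varepsilon)$, (ii) a genuine third-order Taylor remainder coming from \eqref{(K8)}, and (iii) terms of order $\varepsilon^2$ produced by the $\psi$-dependent corrections in the quasi-periodicity \eqref{(K99)} (these match exactly because of the choice \eqref{(K14)}). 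Testing against an arbitrary $V^\varepsilon$ of unit $\mathcal{H}^\varepsilon(\varepsilon\psi)$-norm and using the trace-type estimate $\|V^\varepsilon;L^2(\Upsilon^\varepsilon_\pm)\|\le c\varepsilon^{1/2}\|V^\varepsilon;\mathcal{H}^\varepsilon\|$ as in Section~\ref{subsec53}, all contributions are bounded by $C\varepsilon^2$, with $C$ depending only on $H$ and $\psi_0$ (since $|a^\pm(\psi)|$ and the third derivatives of $U^0(\cdot;\psi)$ are uniformly controlled for $|\psi|\le\psi_0$). Also $\|U^\varepsilon_{\pm,as};\mathcal{H}^\varepsilon(\varepsilon\psi)\|$ stays bounded above and, crucially, away from zero uniformly in $\psi$. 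Lemma~\ref{Lemma_Visik} then yields eigenvalues $M^\varepsilon_\pm(\varepsilon\psi)$ of $\mathcal{B}^\varepsilon(\varepsilon\psi)$ with $|M^\varepsilon_\pm - (1+4\pi^2+\varepsilon\Lambda'_\pm(\psi))^{-1}|\le C\varepsilon^2$, which, by the argument leading from \eqref{(un7)} to \eqref{(un10)}, translates into two eigenvalues $\Lambda^\varepsilon_{\pm}(\varepsilon\psi)$ of \eqref{(11)}--\eqref{(14)} satisfying $|\Lambda^\varepsilon_\pm - 4\pi^2 - \varepsilon\Lambda'_\pm(\psi)|\le C\varepsilon^2$.

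The main obstacle, and what I would address last, is the identification of $\Lambda^\varepsilon_-(\varepsilon\psi)$ and $\Lambda^\varepsilon_+(\varepsilon\psi)$ with precisely the second and third entries of the sequence \eqref{(17)}. Two ingredients are needed. \emph{Distinctness:} the uniform lower bound $\Lambda'_+(\psi) - \Lambda'_-(\psi) \ge 16\pi^2 m_1(\Xi) + 8\pi^2 |\omega|/H > 0$ following from \eqref{(K17n)} ensures that for $\varepsilon$ small enough the two approximate values lie at distance $\ge c_0 > 0$ apart, while the quasi-mode bound is $O(\varepsilon^2)$; moreover, a standard refinement of Lemma~\ref{Lemma_Visik} applied to the two-dimensional quasi-mode space spanned by $U^\varepsilon_{+,as}$ and $U^\varepsilon_{-,as}$ (which are almost orthogonal in $\mathcal{H}^\varepsilon$ up to $O(\varepsilon)$, since their leading parts $U^0(\,\cdot\,;\psi,a^\pm)$ are orthogonal in $L^2(\varpi^0)$ thanks to the orthogonality of the eigenvectors $a^\pm$ of the symmetric system \eqref{(K16)}) produces \emph{two} distinct eigenvalues of $\mathcal{B}^\varepsilon$ within $C\varepsilon^2$ of the respective approximations. \emph{Ordering:} by Theorem~\ref{corollary_convergence}, $\Lambda^\varepsilon_1(\varepsilon\psi)\to 0$ and hence lies far below $4\pi^2$; by Proposition~\ref{PropoR34}, $\Lambda^\varepsilon_4(\varepsilon\psi)\ge 4\pi^2 + K_4$ for $\varepsilon$ small; so in the box $[4\pi^2 - c, 4\pi^2 + c]$ with $c<K_4/2$ there are at most two eigenvalues from \eqref{(17)}, and they must be $\Lambda^\varepsilon_2$ and $\Lambda^\varepsilon_3$. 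Since both $4\pi^2 + \varepsilon\Lambda'_\pm(\psi)$ lie in this box for $\varepsilon<\varepsilon_0(\psi_0)$, and since $\Lambda'_-<\Lambda'_+$, we finally identify $\Lambda^\varepsilon_2(\varepsilon\psi)=\Lambda^\varepsilon_-(\varepsilon\psi)$ and $\Lambda^\varepsilon_3(\varepsilon\psi)=\Lambda^\varepsilon_+(\varepsilon\psi)$, completing the proof.
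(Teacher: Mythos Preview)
Your overall strategy is the paper's, and the identification step at the end is essentially what the paper does (using Proposition~\ref{PropoR34} for the upper bound $\Lambda^\varepsilon_4>4\pi^2+K_4$ and $\Lambda^\varepsilon_1(\varepsilon\psi)\to 0$). The problem lies in the quasi-mode you propose: it is one order too crude to produce an $O(\varepsilon^2)$ discrepancy, and it does not actually belong to $H^{1,\varepsilon\psi}_{per}(\varpi^\varepsilon)$.

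Concretely, in the transition region you use only the \emph{first}-order Taylor polynomial of $U^0$ and only the first boundary layer $w'$. The commutator term $[\Delta_x,X^\varepsilon]\big(U^0(x_1;\psi)-U^0(0;\psi)-x_1\partial_{x_1}U^0(0;\psi)\big)$ then carries a second-order Taylor remainder on strips of width $O(\varepsilon)$, yielding a contribution of order $\varepsilon$ only (compare \eqref{(un6)} with \eqref{(de8)}). Likewise, without the second boundary layer $\widehat{w}^{\,\prime\prime}$ built from $W^3$ (cf.\ \eqref{(ra44)}, \eqref{(ra45)}), the Poisson term produced by the quadratic part of the Taylor expansion \eqref{(K8)} has nothing to cancel against; in the paper this cancellation is exactly the content of $T^\varepsilon_{2,\pm}=T^\varepsilon_{3,\pm}=0$ in Section~\ref{subsec63}. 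With an $O(\varepsilon)$ discrepancy, Lemma~\ref{Lemma_Visik} only places an eigenvalue within $O(\varepsilon)$ of $4\pi^2+\varepsilon\Lambda'_\pm(\psi)$, which is useless because the splitting $\varepsilon(\Lambda'_+-\Lambda'_-)$ is of the same order.

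Second, the relations \eqref{(K14)} do \emph{not} make $U^0+\varepsilon U'$ satisfy the $\varepsilon\psi$-quasi-periodicity condition \eqref{(12)}--\eqref{(13)}; they only remove the $O(\varepsilon)$ defect in the Taylor expansion \eqref{(K99)} of $e^{i\varepsilon\psi}$, leaving an $O(\varepsilon^2)$ mismatch. Membership in $H^{1,\varepsilon\psi}_{per}(\varpi^\varepsilon)$ is a hard constraint, so your $U^\varepsilon_{\pm,as}$ is not in the space. The paper fixes this by adding an explicit corrector $\varepsilon^2 R^\varepsilon_\pm$ supported near $x_1=1/2$ satisfying \eqref{(ra5)} and controlled by \eqref{(ra8)}. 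Once you include (i) the second-order Taylor term of $U^0$, (ii) the second boundary layer $\widehat{w}^{\,\prime\prime}$, and (iii) the quasi-periodicity corrector $R^\varepsilon_\pm$, your argument becomes the paper's.
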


\subsection{Approximate eigenvalues and eigenfunctions}\label{subsec62}

Recalling Section~\ref{subsec51}, based on calculations in Section~\ref{subsec61},
we set
\textcolor{black}{\begin{equation}\begin{array}{c}\displaystyle
M^{1,\varepsilon}_\pm(\psi)=(1+4\pi^2+\varepsilon\Lambda^{\prime}_\pm(\psi))^{-1}
\end{array}\label{(ra1)}\end{equation}
where $\Lambda^{\prime}_\pm(\psi)$} are taken from
\eqref{(K17)}. Similarly to \eqref{(un2)}, based on the asymptotic formulas in Section~\ref{subsec61}, we define the approximate eigenfunction
\begin{multline}\label{(ra2)}
\displaystyle U^{\varepsilon}_\pm(x;\psi)=  \displaystyle X^\varepsilon(x_1)
\big(U^{0}_\pm(x_1;\psi)+\varepsilon
 U^{\prime}_\pm(x_1;\psi)\big) +
\varepsilon\chi_0(x_1)\Big(\widetilde{w}^{\prime}_\pm(\frac{x}{\varepsilon};\psi)+
\varepsilon\widehat{w}^{\,\prime\prime}_\pm(\frac{x}{\varepsilon};\psi)\Big)+\varepsilon^2 R^{\varepsilon}_\pm(x;\psi)
\vspace{0.1cm}\\
 \displaystyle +(1-X^\varepsilon(x_1))\bigg(U^{0}_\pm(0;\psi)+x_1
\frac{\partial U^{0}_\pm}{\partial x_1}(0;\psi)+\frac{x_1^2}{2}
\frac{\partial^2 U^{0}_\pm}{\partial x_1^2}(0;\psi)
+\frac{\varepsilon}{2}\sum\limits_{\tau=\pm} \Big(U^{\prime}_\pm(\tau
0;\psi)+x_1 \frac{\partial U^{\prime}_\pm}{\partial
x_1}(\tau0;\psi)\Big)\bigg).
\end{multline}
Let us describe the terms arising in \eqref{(ra2)}.

The cut-off functions are defined in \eqref{(ra3)}. The main
term \textcolor{black}{$U^{0}_\pm$} is the linear combination
$$U_\pm^0(x;\psi)=a_+^\pm(\psi)e^{+2\pi ix_1}+a_-^\pm(\psi)e^{-2\pi ix_1},$$
where, for each sign $\pm$,  the coefficient column vector \textcolor{black}{$a^\pm(\psi)=(a^\pm_+(\psi),a^\pm_-(\psi))$} is the eigenvector of the
system \eqref{(K16)} with
$\Lambda^\prime(\psi)=\textcolor{black}{\Lambda^{\prime}_\pm(\psi)}$ and
\textcolor{black}{$U^{\prime}_\pm$} is a solution of the problem \eqref{(K13)},
\eqref{(K14)}, \eqref{(K12)}, \eqref{(K12N)}, the compatibility
conditions of which are fulfilled due to \eqref{(K16)}. Both,
\textcolor{black}{$U^{0}_\pm$} and \textcolor{black}{$U^{\prime}_\pm$}, depend on the variable $x_1$ only.
\textcolor{black}{We fix the main term by prescribing the normalization condition
\begin{equation}\begin{array}{c}\displaystyle
\vert a^\pm(\psi)\vert=1\,\,\mbox{\rm
which implies}\,\,\|U^{0}_\pm;H^2(\varpi^0)\|=C_0>0.
\end{array}\label{(ra6)}\end{equation}
Then, the solution of the problem \eqref{(K13)}, \eqref{(K14)},
\eqref{(K12)}, \eqref{(K12N)} meets the estimate
\begin{equation}\begin{array}{c}\displaystyle
\|U^{\prime}_\pm;C^2(\overline{\varpi^0\cap \{x_1>0}\}\|+
\|U^{\prime}_\pm;C^2(\overline{\varpi^0\cap \{x_1<0}\}\|\leq C_1(1+\vert\psi\vert)
\end{array}\label{(ra7)}\end{equation}
due to the factor $\psi$ on the right-hand sides of} \textcolor{black}{\eqref{(K14)} and \eqref{(K17)}.}

The boundary layer terms $\widetilde{w}^{\prime}_\pm$ and $\widehat{w}^{\,\prime\prime}_\pm$  take the
form
\begin{equation}\begin{array}{c}\displaystyle
\textcolor{black}{\widetilde{w}^{\prime}_\pm(\xi;\psi)=\frac{\partial
U^{0}_\pm}{\partial x_1}(0;\psi)\widetilde{W}^{\,1}_0(\xi)=2\pi
i(a^\pm_+(\psi)- a^\pm_-(\psi))\widetilde{W}^{\,1}_0(\xi)}
 \end{array}\label{(ra4)}\end{equation}
and
\begin{equation}\begin{array}{c}\displaystyle
\widehat{w}^{\,\prime\prime}_\pm(\xi;\psi)=\textcolor{black}{-}\frac{\partial^2
U^{0}_\pm}{\partial
x^2_1}(0;\psi)\widehat{W}^{\,3}(\xi)+\frac{1}{2}\sum\limits_{\tau=\pm}\frac{\partial
U^{\prime}_\pm}{\partial x_1}(\tau 0;\psi)\widetilde{W}^{\,1}_0(\xi)
 \end{array}\label{(ra44)}\end{equation}
where $\widetilde{W}_0^{\,1}$ is the exponentially decaying remainder
in the decomposition \eqref{(W100)} while
 $\widehat{W}^{\,3}$ is a bounded part of the solution
\eqref{(SS2)} of the problem \eqref{(S1)}, \eqref{(35)},
\eqref{(36)}, that is,
 \begin{equation}\label{(ra45)}
\widehat{W}^{\,3}(\xi)=W^3(\xi)+\frac{\xi_1^2}{2}-\sum\limits_{\tau=\pm}\tau
\chi_\tau(\xi_1)\frac{\vert\omega\vert}{2H}\xi_1.
\end{equation}
Recalling Proposition~\ref{Proposition_W1} and the relation
\eqref{(ra6)}, we write
\begin{equation}\label{(ra9)}
\|e^{\sigma\vert\xi_1\vert}\textcolor{black}{\widetilde{w}^{\,\prime}_\pm};H^2(\Xi)\|\leq
C_3\,\,\mbox{\rm with
any}\,\,\sigma\in\Big(0,\frac{2\pi}{H}\Big).
\end{equation}

Finally in \eqref{(ra2)}, we fix $R^{\varepsilon}_\pm$ to get $U^{\varepsilon}_\pm\in{\cal H}^\varepsilon(\eta)\setminus\{0\}$.
First, we  take functions $R^{\varepsilon}_\pm\in H^2(\varpi^0)$ such that they have support
in $[1/4,1/2]\times[0,H]$ and satisfy the boundary conditions
\begin{equation}\label{(ra5)}
\begin{split}
& \frac{\partial R^{\varepsilon}_\pm}{\partial
x_2}(x_1,0;\psi)=\frac{\partial R^{\varepsilon}_\pm}{\partial
x_2}(x_1,H;\psi)=0,\,\, x_1\in\Big(-\frac{1}{2},\frac{1}{2}\Big), \\
& R^{\varepsilon}_\pm\Big(\frac{1}{2},x_2;\psi\Big)=\varepsilon^{-2}
\big(e^{i\varepsilon\psi}-1-i\varepsilon\psi\big)
U^{0}_\pm\Big(-\frac{1}{2};\psi\Big)  -\varepsilon^{-1}
\big(e^{i\varepsilon\psi}-1\big)
U^{\prime}_\pm\Big(-\frac{1}{2};\psi\Big), \,\, x_2\in(0,H), \\
& \frac{\partial R^{\varepsilon}_\pm}{\partial
x_1}\Big(\frac{1}{2},x_2;\psi\Big)=\varepsilon^{-2}
\big(e^{i\varepsilon\psi}-1-i\varepsilon\psi\big) \frac{\partial
U^{0}_\pm}{\partial x_1}\Big(-\frac{1}{2};\psi\Big)  -
\varepsilon^{-1} \big(e^{i\varepsilon\psi}-1\big) \frac{\partial
U^{\prime}_\pm}{\partial x_1}\Big(-\frac{1}{2};\psi\Big),  \,\, x_2\in(0,H).
\end{split}
\end{equation}
Applying the Taylor formula to $e^{i\varepsilon\psi}$, and taking into account \eqref{(ra6)} and \eqref{(ra7)}, we
find a function $R_\pm^\varepsilon$ such that, in addition to \eqref{(ra5)},  satisfies \begin{equation}\begin{array}{c}\displaystyle
\|R^{\varepsilon}_\pm;H^2(\varpi^0)\|\leq
C_2\vert\psi\vert(1+\vert\psi\vert).
\end{array}\label{(ra8)}\end{equation}
Owing to the relations \eqref{(ra5)}, the approximate eigenfunction
\eqref{(ra2)} meets the quasi-periodicity conditions \eqref{(12)}, \eqref{(13)} with
$\eta=\varepsilon\psi$. Thus,  $U_\pm^{\varepsilon}$ falls into ${\cal H}^\varepsilon(\eta)\setminus\{0\}$.

Note that the function \eqref{(ra2)} satisfies the Neumann boundary condition
on the lateral sides  $x_2=0$ and $x_2=H$ of the periodicity cell because of
\eqref{(ra5)} for $R^{\varepsilon}_\pm$ and \eqref{(J8)} for
\eqref{(ra4)} and \eqref{(ra44)}.
At the boundaries of the holes
$\varpi^\varepsilon(0,k)$ \textcolor{black}{with $k=0,\dots,N-1$}, \textcolor{black}{by  definition \eqref{(ra3)}
of the cut-off functions, \eqref{(ra4)}, \eqref{(ra44)} and \eqref{(33)}, we obtain for $ x\in \partial \varpi^\varepsilon(0,k)$:}
\begin{equation*}
\begin{split}
\partial_{\nu(x)}U^{\varepsilon}_\pm(x;\psi)=&\bigg(\frac{\partial
U^{0}_\pm}{\partial
x_1}(0;\psi)+\frac{\varepsilon}{2}\sum\limits_\pm\frac{\partial
U^{\prime}_\pm}{\partial x_1}(\tau 0;\psi)\!\bigg)\Big(\nu_1(\xi)+
\partial_{\nu(\xi)}\widetilde{W}^{1}_0(\xi)\Big)
\\&+ \varepsilon\frac{\partial^2 U^{0}_\pm}{\partial
x^2_1}(0;\psi)\Big(\nu_1(\xi)\xi_1-
\partial_{\nu(\xi)}\widehat{W}^{3}(\xi)\!\Big).
\end{split}
\end{equation*}
Now, by \eqref{(40)} and \eqref{(W100)}, we have that $
\partial_{\nu(\xi)}\widetilde{W}^{1}_0(\xi)=-\nu_1(\xi)$.
Moreover, since $\widehat{W}^{3}$ is defined by  \eqref{(ra45)} with $W^3$ satisfying \eqref{(36)}, we get
$\partial_{\nu(\xi)}\widehat{W}^{3}(\xi)=\xi_1\nu_1(\xi)$. Therefore, for
$ x\in \partial \varpi^\varepsilon(0,k)$,
 we get $\partial_{\nu(x)}U^{\varepsilon}_\pm(x;\psi)=0$.
 }

\subsection{Estimating the discrepancy}\label{subsec63}

We take the value \eqref{(ra1)} and the function \eqref{(ra2)} to be the almost eigenvalue and eigenfunction respectively and follow the analysis of Section~\ref{subsec53}. To make it easier the analysis, we also keep the same notations.

Let us proceed to apply Lemma~\ref{Lemma_Visik}. Considering \eqref{(ra1)}, we have
\begin{equation}\label{(de2)}
\begin{split}
\delta_\pm^\varepsilon(\psi):\!&\!=\|U^{\varepsilon}_\pm;{\Hh}^\varepsilon(\varepsilon\psi)\|^{-1}
\|{\Bb}^\varepsilon(\varepsilon\psi)U^{\varepsilon}_\pm-M^{1,\varepsilon}_\pm(\psi)
U^{\varepsilon}_\pm;{\Hh}^\varepsilon(\varepsilon\psi)\|\\
\!&\!=\|U^{\varepsilon}_\pm;{\Hh}^\varepsilon(\varepsilon\psi)\|^{-1}
M^{1,\varepsilon}_\pm(\psi)\, \sup \vert(\Delta_x
U^{\varepsilon}_\pm\!+\!(4\pi^2\!+\!\varepsilon\Lambda^\prime_\pm(\psi))
U^{\varepsilon}_\pm,V^\varepsilon)_{\varpi^\varepsilon}\vert.
\end{split}\end{equation}
The supreme is computed over all function $V^\varepsilon\in{\cal
H}^\varepsilon(\varepsilon \psi)$ with unit norm and this calculation takes into
account definitions \eqref{(J1)}, \eqref{(J2)} and the Green formula
together with the Neumann and quasi-periodicity conditions for
$U^\varepsilon_\pm$.
For any fixed $\psi_0>0$,  let us show the estimate
\begin{equation}\label{deltacota_bis}
\delta_{\pm}^\varepsilon(\psi)\leq c(\psi_0) \varepsilon^2 \qquad \mbox{for } \vert\psi\vert<\psi_0, \, \varepsilon\leq \varepsilon_0
\end{equation}
with $c(\psi_0)$ and $\varepsilon_0=\varepsilon_0(\psi_0)$ some constants independent of $\psi$ but they depend on $\psi_0.$

Indeed, we write
\begin{equation}\label{(un5bis)}
\Delta_x
U^{\varepsilon}_\pm(x;\psi)+\big(4\pi^2+\varepsilon\Lambda^\prime_\pm(\psi)\big)
U^{\varepsilon}_\pm(x;\psi)=:\sum\limits_{j=1}^{10}S^\varepsilon_{j,\pm}(x;\psi),
\end{equation}
where
\begin{eqnarray*}
&&S^\varepsilon_{1,\pm}(x;\psi)=
X^\varepsilon(x_1)(\Delta_xU^{0}_\pm(x_1;\psi)+4\pi^2U^{0}_\pm(x_1;\psi)), \\
&&S^\varepsilon_{2,\pm}(x;\psi)=
\varepsilon X^\varepsilon (x_1)\big(\Delta_xU^{\prime}_\pm(x_1;\psi)+4\pi^2U^{\prime}_\pm(x_1;\psi)+
\Lambda^{\prime}_\pm(\psi)U^{0}_\pm(x_1;\psi)\big), \\
&& S^\varepsilon_{3,\pm}(x;\psi)=
\varepsilon^2\left(X^\varepsilon(x_1) \Lambda^{\prime}_\pm(\psi)U^{\prime}_\pm(x_1;\psi)+
\big(\Delta_x+4\pi^2+\varepsilon\Lambda^\prime_\pm(\psi)\big)
R^{\varepsilon}_\pm(x;\psi)\right),\\
&& S^\varepsilon_{4,\pm}(x;\psi)=
\big(4\pi^2+\varepsilon\Lambda^\prime_\pm(\psi)\big)
(1-X^\varepsilon(x_1))\left(U^{0}_\pm(0;\psi)+x_1
\frac{\partial U^{0}_\pm}{\partial x_1}(0;\psi)+\frac{x_1^2}{2}
\frac{\partial^2 U^{0}_\pm}{\partial x_1^2}(0;\psi)\right)\\
&&\hspace{2.5cm}+(1-X^\varepsilon(x_1))\frac{\partial^2 U^{0}_\pm}{\partial x_1^2}(0;\psi), \\
&& S^\varepsilon_{5,\pm}(x;\psi)=
\big(4\pi^2+\varepsilon\Lambda^\prime_\pm(\psi)\big)
(1-X^\varepsilon(x_1))\frac{\varepsilon}{2}\sum\limits_{\tau=\pm} \Big(U^{\prime}_\pm(\tau
0;\psi)+x_1 \frac{\partial U^{\prime}_\pm}{\partial
x_1}(\tau0;\psi)\Big),\\
&& S^\varepsilon_{6,\pm}(x;\psi) =
 [\Delta_x,X^\varepsilon(x_1)]\bigg(U^{0}_\pm(x_1;\psi)-U^{0}_\pm(0;\psi)-x_1
\frac{\partial U^{0}_\pm}{\partial x_1}(0;\psi)-\frac{x_1^2}{2}
\frac{\partial^2 U^{0}_\pm}{\partial x_1^2}(0;\psi)\bigg),\\
&&S^\varepsilon_{7,\pm}(x;\psi)=
 \varepsilon[\Delta_x,X^\varepsilon(x_1)]\bigg(U^{\prime}_\pm(x_1;\psi)
- \frac{1}{2}\sum\limits_{\tau=\pm}\bigg(U^{\prime}_\pm(\tau 0;\psi)
+x_1 \frac{\partial U^{\prime}_\pm}{\partial x_1}(\tau 0;\psi)\bigg)\bigg),\\
&&S^\varepsilon_{8,\pm}(x;\psi)=
 \chi_0(x_1) \Big(\ee^{-1}\Delta_\xi\widetilde{w}^{\prime}_\pm(\xi;\psi)+
\Delta_\xi\widehat{w}^{\,\prime\prime}_\pm(\xi;\psi)\Big), \\
&&S^\varepsilon_{9,\pm}(x;\psi)=
\varepsilon[\Delta_x,\chi_0(x_1)]\big(\widetilde{w}^{\,\prime}_\pm(\xi;\psi)+
\varepsilon\widehat{w}^{\,\prime\prime}_\pm(\xi;\psi)\big),\\
&&S^\varepsilon_{10,\pm}(x;\psi)= \varepsilon (4\pi^2+\varepsilon \Lambda^\prime_\pm(\psi))\chi_0(x_1)
\big(\widetilde{w}^{\prime}_\pm(\xi;\psi) + \varepsilon\widehat{w}^{\,\prime\prime}_\pm(\xi;\psi)\big).
\end{eqnarray*}
Let us estimate the scalar products
 $$
I^\varepsilon_j(V^\varepsilon;\psi)=(S^\varepsilon_{j,\pm},V^\varepsilon)_{\varpi^\varepsilon} \quad \mbox{for } j=1,2,\ldots,10 \mbox{ and } V^\varepsilon\in{\cal H}^\varepsilon(\varepsilon\psi).
 $$

First of all, according the definitions of
$U_\pm^0$ and $U_\pm^\prime$ (cf. \eqref{(29)}  and \eqref{(K13)}) there holds $S^\varepsilon_1=0$ and
$S^\varepsilon_2=0$ so that
\begin{equation}\begin{array}{c}\displaystyle
I^\varepsilon_1(V^\varepsilon;\psi)=0\,\,\mbox{\rm
and}\,\,I^\varepsilon_2(V^\varepsilon;\psi)=0.
\end{array}\label{(de4)}\end{equation}
Furthermore,
by \eqref{(K17)}, \eqref{(ra7)} and \eqref{(ra8)}  we readily derive the estimate
$$
\vert I^\varepsilon_3(V^\varepsilon;\psi)\vert\leq
c_1\varepsilon^2(1+\vert\psi\vert)^3\|V^\varepsilon;L^2(\varpi^\varepsilon)\|.
$$

Now, using the  definition of $U^0_\pm$, we write
\begin{equation*}
\begin{split}
S^\varepsilon_{4,\pm}(x;\psi)=  &\varepsilon\Lambda^\prime_\pm(\psi)(1-X^\varepsilon(x_1)) U^{0}_\pm(0;\psi)\\
&+\big(4\pi^2+\varepsilon\Lambda^\prime_\pm(\psi)\big)
(1-X^\varepsilon(x_1))\left( \!x_1
\frac{\partial U^{0}_\pm}{\partial x_1}(0;\psi)+\frac{x_1^2}{2}
\frac{\partial^2 U^{0}_\pm}{\partial x_1^2}(0;\psi)\right).
\end{split}
\end{equation*}
Thus, by construction of the test
function $X^\varepsilon$, the support of $S_{4,\pm}^\varepsilon$ is included in $\Theta^\varepsilon=[-2\varepsilon R, 2\varepsilon R ] \times [0,H]$ and   we easily obtain the estimate
\begin{equation}\nonumber
\vert I^\varepsilon_4(V^\varepsilon;\psi)\vert\leq
{\color{black}c_2 \varepsilon (1+\vert\psi\vert) \vert\Theta^\varepsilon\vert^{1/2} \|V^\varepsilon;L^2(\Theta^\varepsilon)\| \leq
c_3 \varepsilon^2 (1+\vert\psi\vert) \|V^\varepsilon;{\cal H}^\varepsilon(\varepsilon\psi)\|. }
\label{(de6.1)}
\end{equation}
Similarly, we obtain
\begin{equation}\nonumber
\vert I^\varepsilon_5(V^\varepsilon;\psi)\vert\leq
c_4\varepsilon^2 (1+\vert\psi\vert)^2 \|V^\varepsilon;{\cal H}^\varepsilon(\varepsilon\psi)\|.
\label{(de6.2)}
\end{equation}

In a similar way to \eqref{(un6)}, using the Taylor formula for $U^{0}_\pm$ yields the inequality
\begin{equation}\label{(de8)}
\vert I^\varepsilon_6(V^\varepsilon;\psi)\vert  \leq
c_5\sum\limits_\pm\vert\Upsilon^\varepsilon_\pm\vert^{1/2}
\varepsilon \max\limits_{x\in\Upsilon^\varepsilon_\pm}\bigg\vert \frac{\partial^3 U_\pm^0}{\partial
x_1^3}(x_1;\psi)\bigg\vert\,\|V^\varepsilon;L^2(\Upsilon^\varepsilon_\pm)\|
\leq c_6\varepsilon^2\|V^\varepsilon;{\cal
H}^\varepsilon(\varepsilon\psi)\|.
\end{equation}

As regards $S^\varepsilon_{7,\pm}$ and $S^\varepsilon_{8,\pm}$ or equivalently, first we note that
\begin{multline*}
\textcolor{black}{\frac{1}{2}\sum\limits_{\tau=\pm}
\Big(U^{\prime}_\pm(\tau 0;\psi)+ x_1\frac{\partial U^{\prime}_\pm}{\partial x_1}(\tau 0;\psi)\Big)
= U^{\prime}_\pm(\sigma 0;\psi)+x_1\frac{\partial U^{\prime}_\pm}{\partial x_1}(\sigma 0;\psi)}
-\frac{\sigma}{2}[U^{\prime}_\pm]_0(\psi)
-\frac{\sigma}{2} x_1\Big[\frac{\partial U^{\prime}_\pm}{\partial x_1}\Big]_0(\psi), \\
\quad\mbox{ for }\,\sigma x_1>0,\quad \sigma\in\{-1,+1\}.
\end{multline*}
Besides,
$[\Delta_x,X^\varepsilon(x_1)]=
\textcolor{black}{-[\Delta_x,\chi_+(x_1/\varepsilon)]-[\Delta_x,\chi_-(x_1/\varepsilon)]}$, and  we get
\begin{equation*}
\begin{split}
\!S^\varepsilon_{7,\pm}(x;\psi) =& -\varepsilon\!\sum\limits_{\sigma=\pm}
[\Delta_x,\chi_\sigma(x_1/\varepsilon)]
\bigg(U^{\prime}_\pm(x_1;\psi)- U^{\prime}_\pm(\sigma 0;\psi)
-x_1 \frac{\partial U^{\prime}_\pm}{\partial x_1}(\sigma 0;\psi)\bigg)\\
&+\frac{1}{2} [U^{\prime}_\pm]_0(\psi)\varepsilon^{-1}\!\sum\limits_{\tau=\pm} \!
\tau\Delta_\xi\chi_\tau(\xi_1)
+\frac{1}{2}\Big[\frac{\partial U^{\prime}_\pm}{\partial x_1}\Big]_0(\psi)\!\sum\limits_{\tau=\pm} \!
\tau\Delta_\xi(\xi_1\chi_\tau(\xi_1)).
\end{split}\end{equation*}
The first term can be estimated and the others will be when they are joined into $S^\varepsilon_{8,\pm}$. Indeed, let us write
\begin{equation}\label{8-1}
S^\varepsilon_{7,\pm}(x;\psi)+ S^\varepsilon_{8,\pm}(x;\psi) =:\sum\limits_{j=1}^{4}T^\varepsilon_{j,\pm}(x;\psi),
\end{equation}
where
\begin{eqnarray*}
&&T^\varepsilon_{1,\pm}(x;\psi)= -\varepsilon\!\sum\limits_{\sigma=\pm}
[\Delta_x,\chi_\sigma(x_1/\varepsilon)]
\Big(\!U^{\prime}_\pm(x_1;\!\psi)- U^{\prime}_\pm(\sigma 0;\!\psi)
-x_1 \frac{\partial U^{\prime}_\pm}{\partial x_1}(\sigma 0;\!\psi)\!\Big),\\
&&T^\varepsilon_{2,\pm}(x;\psi)= \varepsilon^{-1}\chi_0(x_1)\Big(\Delta_\xi\widetilde{w}^{\,\prime}_\pm(\xi;\psi)+
\frac{1}{2}[U^{\prime}_\pm]_0(\psi)\sum\limits_{\tau=\pm}\tau
\Delta_\xi\chi_\tau(\xi_1)\Big),\\
&&T^\varepsilon_{3,\pm}(x;\psi)= \chi_0(x_1)\Big( \Delta_\xi\widehat{w}^{\,\prime\prime}_\pm(\xi;\psi)+
\frac{1}{2}\Big[\frac{\partial U^{\prime}_\pm}{\partial
x_1}\Big]_0(\psi)\sum\limits_{\tau=\pm} \tau\Delta_\xi
(\xi_1\chi_\tau(\xi_1))\Big),\\
&&T^\varepsilon_{4,\pm}(x;\psi)= \big(1-\chi_0(x_1)\big)
\Big(
\frac{1}{2\varepsilon}[U^{\prime}_\pm]_0(\psi)\sum\limits_{\tau=\pm}\tau
\Delta_\xi\chi_\tau(\xi_1)+ \frac{1}{2}\Big[\frac{\partial U^{\prime}_\pm}{\partial
x_1}\Big]_0(\psi)\sum\limits_{\tau=\pm} \tau\Delta_\xi
(\xi_1\chi_\tau(\xi_1))
\Big).
\end{eqnarray*}
Similarly to \eqref{(un6)} and \eqref{(de8)}, using the Taylor formula for {\color{black}$U_\pm^\prime$}
yields the inequality
\begin{equation}
\vert(T^\varepsilon_{1,\pm},V^\varepsilon)_{\varpi^\varepsilon}\vert\leq
c_7 \varepsilon\sum\limits_\pm\vert\Upsilon^\varepsilon_\pm\vert^{1/2}
\,\max\limits_{x\in\Upsilon^\varepsilon_\pm}\bigg\vert \frac{\partial^2 U_\pm^\prime}{\partial
x_1^2}(x_1;\psi)\bigg\vert\,\|V^\varepsilon;L^2(\Upsilon^\varepsilon_\pm)\|
\leq  c_7\varepsilon^2(1+\vert\psi\vert)\|V^\varepsilon;{\cal H}^\varepsilon(\varepsilon\psi)\|.  \label{8-2}
\end{equation}
Now, by formulas \eqref{(ra4)},  \eqref{(W100)}, \eqref{(K12)},  \eqref{(ra44)}, \eqref{(ra45)} and  \eqref{(K12N)},
and the fact that $\Delta_\xi W_0^{\, 1}=0$, $-\Delta_\xi W^3=1$ and $\frac{\partial U_\pm^\prime}{\partial x_1}(+0;\psi)=-\frac{\partial U_\pm^\prime}{\partial x_1}(-0;\psi)$ (cf.~\eqref{(S18)}), it follows that
$$
T^\varepsilon_{2,\pm}(x;\psi)= T^\varepsilon_{3,\pm}(x;\psi) =0.
$$
On the other hand, since the support of $\big(1-\chi_0(x_1)\big)$ is  contained in
$\{\vert x_1\vert{\color{black}\geq 1/6}\}$ (see \eqref{(ra3)}) and the support of the derivatives of $\chi_\pm$ is {\color{black}in} $\{\pm \xi_1\in[R,2R]\}$ (see \eqref{(chi)}), under the condition
$
\varepsilon<\frac{1}{12R},
$
we have that $T^\varepsilon_{4,\pm}(x;\psi)=0$. Thus,  by \eqref{8-1} and \eqref{8-2},
we get
$$
\vert I^\varepsilon_7(V^\varepsilon;\psi) + I^\varepsilon_8(V^\varepsilon;\psi)\vert\leq
c_7\varepsilon^2 {\color{black}(1+\vert\psi\vert)}\|V^\varepsilon;{\cal
H}^\varepsilon(\varepsilon\psi)\|.
$$

Now, we consider $S^\varepsilon_{9,\pm}$. In a similar way to \eqref{(un62)}, since the coefficients of the commutator $[\Delta_x,\chi_0]$ do not
depend on $\varepsilon$ and have their supports in the union of the rectangles  $\Upsilon^0_\pm$,
while $\widetilde{w}^{\,\prime}_\pm(\xi;\psi)$, $\nabla_\xi\widetilde{w}^{\,\prime}_\pm(\xi;\psi)$ and $\nabla_\xi\widetilde{w}^{\,\prime\prime}_\pm(\xi;\psi)$ are exponentially decaying functions and $\widetilde{w}^{\,\prime\prime}_\pm(\xi;\psi)$ is a bounded function (see \eqref{(ra4)}--\eqref{(ra45)}), we have
\begin{equation}\begin{split}
\vert I^\varepsilon_9(V^\varepsilon;\psi)\vert
&\leq  c\,\varepsilon^2\sup\limits_{\xi\in\Xi}(\vert\xi_1\vert\vert\widetilde{w}^{\,\prime}_\pm(\xi)\vert +
\vert\xi_1\vert^2 \vert\nabla_{\xi_1} \widetilde{w}^{\,\prime}_\pm(\xi)\vert \!+\! \vert\widehat{w}^{\,\prime\prime}_\pm(\xi)\vert \!+\! \vert\xi_1\vert \vert\nabla_{\xi_1} \widehat{w}^{\,\prime\prime}_\pm(\xi)\vert) \|V^\varepsilon;L^2(\varpi^\varepsilon)\| \vspace{0.1cm}\\
&\leq C\,
\varepsilon^2 (1+\vert\psi\vert) \textcolor{black}{\,\|V^\varepsilon;L^2(\varpi^\varepsilon)\|} . \nonumber
 \end{split}\label{(de9)}\end{equation}}

Finally, to estimate $I_{10}^\varepsilon(V^\varepsilon;\psi)$, we introduce the following lemma:
\begin{Lemma}
Let $\chi_1\in C^\infty({\mathbb R}), \,  \chi_1(x_1)=1\,\,\mbox{\rm
for}\,\,\vert x_1\vert\leq 1/3,\, \chi_1(x_1)=0\,\,\mbox{\rm for}\,\,\vert x_1\vert\geq 2/3$.
There is $\varepsilon_0>0$ such that, for $\varepsilon<\varepsilon_0$, the inequality
 \begin{equation}\begin{array}{c}\displaystyle
\|e^{-\sigma
\vert x_1\vert/\varepsilon}{\color{black}\chi_1}V^\varepsilon;L^2(\varpi^\varepsilon)\|\leq
c_\sigma\varepsilon^{1/2}\|V^\varepsilon;H^1(\varpi^\varepsilon)\|
 \end{array}\label{(MEP1)}\end{equation}
is valid for all $V^\varepsilon\in H^1(\varpi^\varepsilon)$ with any $\sigma>0$ and a factor $c_\sigma$ independent of
$\varepsilon$.
\end{Lemma}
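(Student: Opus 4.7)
The plan is to prove the weighted $L^2$ bound by a direct Fubini-type argument, exploiting the one-dimensional structure of the weight $e^{-\sigma|x_1|/\varepsilon}\chi_1(x_1)$ together with a cross-sectional Sobolev embedding in the variable $x_1$.

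First I would split the integral $\|e^{-\sigma|x_1|/\varepsilon}\chi_1 V^\varepsilon;L^2(\varpi^\varepsilon)\|^2$ using Fubini, writing, for a.e.\ $x_2\in(0,H)$, the horizontal section $J(x_2)=\{x_1:(x_1,x_2)\in\varpi^\varepsilon\}$. By hypothesis on $\omega$, the perforation string lies in the thin vertical strip $\{|x_1|<\varepsilon R_0\}$ for some fixed $R_0$, so for $\varepsilon<\varepsilon_0:=1/(4R_0)$, either $J(x_2)=(-1/2,1/2)$, or it is the disjoint union of two intervals, each of length at least $1/4$, separated by a short gap inside $\{|x_1|<\varepsilon R_0\}$. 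This uniform lower bound on the length of each connected component of $J(x_2)$ is what allows the one-dimensional Sobolev embedding $H^1(J(x_2))\hookrightarrow L^\infty(J(x_2))$ to be applied with a constant independent of $\varepsilon$ and $x_2$:
\begin{equation*}
\|V^\varepsilon(\cdot,x_2);L^\infty(J(x_2))\|^2\leq C\,\|V^\varepsilon(\cdot,x_2);H^1(J(x_2))\|^2.
\end{equation*}

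Next, I would pull the $L^\infty$-norm out of the $x_1$-integration on each slice, estimating
\begin{equation*}
\int_{J(x_2)} e^{-2\sigma|x_1|/\varepsilon}\chi_1(x_1)^2\,|V^\varepsilon(x_1,x_2)|^2\,dx_1
\leq \|V^\varepsilon(\cdot,x_2);L^\infty(J(x_2))\|^2\int_{-1/2}^{1/2} e^{-2\sigma|x_1|/\varepsilon}\,dx_1,
\end{equation*}
and using the elementary bound $\int_{-1/2}^{1/2} e^{-2\sigma|x_1|/\varepsilon}\,dx_1\leq \varepsilon/\sigma$. Combining with the Sobolev inequality on the slices and integrating over $x_2\in(0,H)$ via Fubini yields
\begin{equation*}
\|e^{-\sigma|x_1|/\varepsilon}\chi_1 V^\varepsilon;L^2(\varpi^\varepsilon)\|^2
\leq \frac{C\varepsilon}{\sigma}\int_0^H \|V^\varepsilon(\cdot,x_2);H^1(J(x_2))\|^2\,dx_2
\leq \frac{C\varepsilon}{\sigma}\|V^\varepsilon;H^1(\varpi^\varepsilon)\|^2,
\end{equation*}
from which the desired inequality \eqref{(MEP1)} follows with $c_\sigma=\sqrt{C/\sigma}$.

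The only delicate point is ensuring that the one-dimensional embedding constant is uniform in $x_2$ and $\varepsilon$; this is guaranteed once $\varepsilon$ is small enough that the holes do not reach the lateral sides $x_1=\pm 1/2$, so each component of $J(x_2)$ has length between $1/4$ and $1$. The cutoff $\chi_1$, although not strictly needed for the finiteness of the weight integral, has support in $[-2/3,2/3]$ and could alternatively be used to restrict the integration away from a neighborhood of the quasi-periodic sides; in the argument above it is harmlessly bounded by $1$. I expect no substantive obstacle beyond this bookkeeping, as the key mechanism — the weight $e^{-2\sigma|x_1|/\varepsilon}$ contributes a factor of order $\varepsilon$ upon $x_1$-integration while $V^\varepsilon$ is controlled pointwise in $x_1$ by its $H^1$ norm on the slice — is transparent.
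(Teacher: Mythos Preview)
Your argument is correct and uses the same core mechanism as the paper --- slice in $x_2$, control $|V^\varepsilon(x_1,x_2)|$ pointwise in $x_1$ by the one-dimensional $H^1$ norm on the slice, and then integrate the weight $e^{-2\sigma|x_1|/\varepsilon}$ to produce the factor $\varepsilon$ --- but the execution differs. The paper first extends $V^\varepsilon$ to a function $\widehat V^{\,\varepsilon}\in H^1(\varpi^0)$ using the uniform extension operator of \eqref{(KK3)}, and then, on the \emph{full} interval $(-1/2,1/2)$, writes $(\chi_1\widehat V^{\,\varepsilon})^2$ via the fundamental theorem of calculus; the cut-off $\chi_1$ enters there to kill the boundary term. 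You instead avoid the extension and work directly on the perforated slice $J(x_2)$ with the Sobolev embedding $H^1\hookrightarrow L^\infty$ on each connected component; in your version $\chi_1$ is indeed inert. Your route is slightly more elementary, while the paper's extension trick sidesteps any bookkeeping about the geometry of the slices.

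One caveat: your claim that $J(x_2)$ is either $(-1/2,1/2)$ or a union of exactly two intervals uses, implicitly, that the horizontal section of $\omega$ is connected. The paper only assumes $\partial\omega$ is a smooth simple closed curve, so for non-convex $\omega$ the slice $J(x_2)$ may have additional short components lying entirely inside $\{|x_1|<\varepsilon R_0\}$, and on those the embedding constant is not uniform. This is easily repaired: on the two long outer components (those reaching $x_1=\pm 1/2$, each of length $\geq 1/4$) your Sobolev argument applies verbatim; the remaining contribution is bounded by $\|V^\varepsilon;L^2(\varpi^\varepsilon\cap\{|x_1|<\varepsilon R_0\})\|^2\leq C\varepsilon\|V^\varepsilon;H^1(\varpi^\varepsilon)\|^2$ via the thin-strip estimate already used in the paper (cf.~\eqref{(KK4)}). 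With this patch your proof is complete for arbitrary $\omega$.
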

\begin{proof}
Without loss of generality we assume that $V^\varepsilon$
is a real function. We {\color{black}consider} the extended function
$\widehat{V}^{\,\varepsilon}$ constructed in such a way that satisfies \eqref{(KK3)}. We
have
\begin{eqnarray*}
&&\!\!\int\limits_{-1/2}^{1/2}  \!\!e^{-2\sigma
\vert x_1\vert/\varepsilon}\vert\chi_1(x_1)\widehat{V}^\varepsilon(x_1,x_2)\vert^2dx_1
=\!\!\int\limits_{-1/2}^{1/2} e^{-2\sigma
\vert x_1\vert/\varepsilon}
\bigg\vert\int\limits_{x_1}^{1/2}\!\!\frac{\partial}{\partial
t}(\chi_1(t)\widehat{V}^\varepsilon(t,x_2))^2\,dt\bigg\vert dx_1\\
&&\qquad \leq C \!\!\int\limits_{-1/2}^{1/2} \!\!e^{-2\sigma
\vert x_1\vert/\varepsilon}dx_1
\int\limits_{x_1}^{1/2} \! \bigg(\Big\vert\frac{\partial
\widehat{V}^\varepsilon}{\partial
t}(t,x_2)\Big\vert^2+\vert\widehat{V}^\varepsilon(t,x_2)\vert^2\bigg)\,dt
\\
&&\qquad \leq \varepsilon C_\sigma  \!\!\int\limits_{x_1}^{1/2} \!\bigg(\Big\vert\frac{\partial
\widehat{V}^\varepsilon}{\partial
t}(t,x_2)\Big\vert^2+\vert\widehat{V}^\varepsilon(t,x_2)\vert^2\bigg)\,dt.
\end{eqnarray*}
Integrating the above formula in $x_2\in(0,H)$, cf.  \eqref{(KK3)}, we get \eqref{(MEP1)}.
\end{proof}

In addition, using the periodicity of  $\widetilde{w}^{\,\prime}_\pm(\xi;\psi)$ in $\xi_2$, cf.~\eqref{(33)} and \eqref{(ra4)},
we have
\begin{equation}\label{(de10a)}
\!\!\! \|e^{\frac{\sigma}{\varepsilon}\vert x_1\vert}\widetilde{w}^{\,\prime}_{\pm}\Big(\frac{x}{\varepsilon}; \psi\Big);L^2(\varpi^\varepsilon)\| \!\leq \!
c \varepsilon^{\frac{1}{2}} \|e^{\sigma \vert\xi_1\vert}\widetilde{w}_\pm^{\prime}(\xi;\psi);L^2(\Xi)\|, \,\,\,\sigma\!\in\!\Big(0,\frac{2\pi}{H}\Big).
\end{equation}
Thus, gathering \eqref{(MEP1)}, \eqref{(de10a)}, \eqref{(ra9)} and the boundedness of  $\widetilde{w}^{\,\prime\prime}_\pm(\xi;\psi)$, we
conclude that
\begin{equation}\hspace{-0.02cm}\begin{array}{ll}\displaystyle
 \!\!\vert I^\varepsilon_{10}(V^\varepsilon;\psi)\vert
\!\!\!\!\!&\leq \! \varepsilon \vert4\pi^2\!+\!\varepsilon \Lambda^\prime_\pm(\psi)\vert
\Big(\|e^{\frac{\sigma}{\varepsilon}\vert x_1\vert}\widetilde{w}^{\,\prime}_{\pm};L^2(\varpi^\varepsilon)\|\,\|e^{\frac{-\sigma}{\varepsilon}\vert x_1\vert}
V^\varepsilon;L^2(\varpi^\varepsilon)\|
+   c\varepsilon \sup\limits_{\xi\in\Xi} \vert\widehat{w}^{\,\prime\prime}_{\pm}\vert  \|V^\varepsilon;L^2(\varpi^\varepsilon)\|\Big) \vspace{0.15cm}\\
&\leq\! c\varepsilon(1\!+\!\varepsilon(1+\vert\psi\vert))\left(\varepsilon^{1/2}\|e^{\sigma\vert\xi_1\vert}\widetilde{w}_\pm^{\prime};L^2(\Xi)\|\, \varepsilon^{1/2}\|V^\varepsilon;H^1(\varpi^\varepsilon)\| +
c\varepsilon (1\!+\!\vert\psi\vert)\|V^\varepsilon;L^2(\varpi^\varepsilon)\|\right)
\vspace{0.15cm}\\
&\leq \!
c_{10}\,\varepsilon^2(1\!+\!\vert\psi\vert)^2\,\|V^\varepsilon;H^1(\varpi^\varepsilon)\|.
\end{array}\label{(de10)}\end{equation}

Also, fixed $\psi_0>0$, by definition of $U_\pm^\varepsilon$ (see \eqref{(ra2)}--\eqref{(ra6)}), it can be proved that
\begin{equation}\label{(un64bis)}
\|U_\pm^\varepsilon;{\cal H}^\varepsilon(\varepsilon\psi)\|^2 \lie \|U_\pm^0;L^2(\varpi^0)\|^2+ \|\nabla_x U_\pm^0;L^2(\varpi^0)\|^2=(1+4\pi^2)H
\end{equation}
for }$\vert\psi\vert\leq \psi_0$. Finally, on account of  \eqref{(de2)},  \eqref{(un5bis)}, the estimates \eqref{(de4)}--\eqref{(de10)},  and the convergence \eqref{(un64bis)}, we arrive at   \eqref{deltacota_bis}.

\medskip

We are ready to apply Lemma~\ref{Lemma_Visik} ending the proof of Theorem~\ref{Theorem_multiple1}.

For any fixed $\psi_0>0$, we consider  \eqref{(de2)} and \eqref{deltacota_bis}.
Lemma~\ref{Lemma_Visik} gives eigenvalues
$M^\varepsilon_\pm(\varepsilon\psi)$ of the operator ${\cal
B}^\varepsilon(\eta)$ admitting the estimates
\begin{equation}\begin{array}{c}\displaystyle
\vert M^\varepsilon_\pm(\varepsilon\psi)-{\color{black}M^{1,\varepsilon}_\pm(\psi)}\vert\leq {\color{black} c(\psi_0)\varepsilon^2}
\end{array}\label{(de11)}\end{equation}
where {\color{black}$c(\psi_0)$ is independent of $\varepsilon$.}
Similarly to \eqref{(un8)}--\eqref{(un10)} we derive from \eqref{(de11)} that
under the restriction
$\varepsilon\leq\varepsilon(\psi_0),$
the corresponding eigenvalues
$\Lambda^\varepsilon_\pm(\varepsilon\psi)$   in the sequence
\eqref{(17)} satisfy the relations
\begin{equation}\label{EstEn0}
\begin{array}{c}\displaystyle
\vert\Lambda^\varepsilon_+(\varepsilon\psi)-4\pi^2-\varepsilon{\color{black}\Lambda_+^{\prime}}(\psi)\vert\leq
{\color{black}C(\psi_0)\varepsilon^2},\\\\
\vert\Lambda^\varepsilon_-(\varepsilon\psi)-4\pi^2-\varepsilon{\color{black}\Lambda_-^{\prime}}(\psi)\vert\leq
{\color{black}C(\psi_0)\varepsilon^2,}
\end{array}
\end{equation}
where $\Lambda_\pm^{\prime}(\psi)$ are given by \eqref{(K17)}.

Now, to identify $\Lambda^\varepsilon_-(\varepsilon\psi)$
and $\Lambda^\varepsilon_+(\varepsilon\psi)$  we use that
$$
\Lambda_+^{\prime}(\psi)-{\color{black}\Lambda_-^{\prime}}(\psi)=
8\pi\sqrt{4\pi^2{\color{black}\Big(m_1(\Xi)+\dfrac{\vert\omega\vert}{2H}\Big)^{\!2}}+\psi^2}
$$
and hence $\Lambda_-^\varepsilon (\varepsilon \psi) < \Lambda_+^\varepsilon (\varepsilon \psi)$ for  $\vert\psi\vert<\psi_0$.
Besides, from  \eqref{EstEn0} and \eqref{(K17)}, we can check that $\Lambda_+^\varepsilon (\varepsilon \psi)<4\pi^2+K_4$ for $\vert\psi\vert<\psi_0$ and $\varepsilon>0$ small enough, and consequently, by \eqref{EstL4}, $\Lambda_+^\varepsilon (\varepsilon \psi)\leq \Lambda_3^\varepsilon (\varepsilon \psi)$ under the assumption  $H\in(0,1/2).$ Finally, since $\Lambda_1^\varepsilon(0)=0\neq \Lambda_-^\varepsilon(0)$, we can identify $\Lambda^\varepsilon_-(\varepsilon\psi)=\Lambda^\varepsilon_2(\varepsilon\psi)$ and  $\Lambda^\varepsilon_+(\varepsilon\psi)=\Lambda^\varepsilon_3(\varepsilon\psi)$ for  $\vert\psi\vert\leq\psi_0$. This ends the proof of Theorem~\ref{Theorem_multiple1}.

\subsection{The node $(\eta_{\mbox{\tiny$\square$}}, \Lambda_{\mbox{\tiny$\square$}})= (\pm\pi,\pi^2)$ for $H\in(0,1)$}\label{subsec64}

Following the scheme in Sections~\ref{subsec61}--\ref{subsec63} for the node $(\eta_{\mbox{\normalsize$\circ$}}, \Lambda_{\mbox{\normalsize$\circ$}})= (0,4\pi^2)$, we consider the node $(\eta_{\mbox{\tiny$\square$}}, \Lambda_{\mbox{\tiny$\square$}})= (\pm\pi,\pi^2)$ under the assumption $H\in(0,1)$; cf.~Figure~\ref{fig2} a) and b).
For the sake of brevity, here we only outline the main changes.

Thanks to the $2\pi$-periodicity in $\eta$, we consider the node $(\eta_{\mbox{\tiny$\square$}}, \Lambda_{\mbox{\tiny$\square$}})= (\pm\pi,\pi^2)$ as the
intersection point of the dispersion curves
\begin{equation}\nonumber\begin{array}{c}\displaystyle
\Lambda=\eta^2\,\,\mbox{\rm and}\,\,\Lambda=(2\pi-\eta)^2\,\,\mbox{\rm with}\,\,\eta\in[0,2\pi].
\end{array}\label{KK1}\end{equation}
In other words, we extend by periodicity  the truss in Figure~\ref{fig2} a) as it is
depicted in  {\color{black}Figure~\ref{fig5}} a). Correspondingly, the dispersion curves in
{\color{black}Figure~\ref{fig3}} a) are extended periodically as well, cf.,  {\color{black}Figure~\ref{fig5}} b).

\begin{figure}
\begin{center}
\resizebox{!}{2.9cm} {\includegraphics{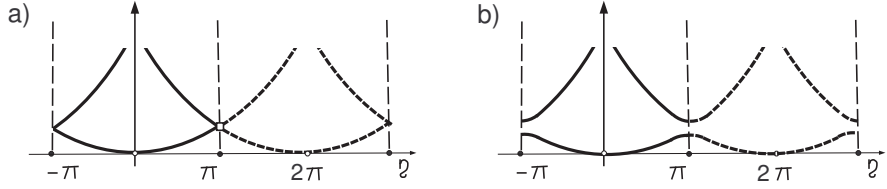}}
\caption{Duplication of the dispersion curves, limit {\bf a)} and
perturbed {\bf b)}.}
\label{fig5}
\end{center}
\end{figure}

\medskip

Let us list the changes with respect to Section~\ref{subsec61} which are necessary to support
the asymptotic ans\"{a}tze \eqref{(K3)} and \eqref{(K4)},
\eqref{(K6)} for the eigenpairs $\{\Lambda^\varepsilon_p(\eta),
U^\varepsilon_p(x;\psi)\}$, $p=1,2$, of the problem
\eqref{(11)}--\eqref{(14)} with the fast Floquet variable
\begin{equation}\begin{array}{c}\displaystyle
\psi=\varepsilon^{-1}(\eta-\pi)
\end{array}\label{(N4)}\end{equation}
instead of \eqref{(KK0)}.

To the eigenvalue $\Lambda^0:=\!\Lambda_1^0(\pi)\!=\!\Lambda_2^0(\pi)\!=\!\pi^2$ of the problem  \eqref{(29)}--\eqref{(25)}, there
corresponds the eigenfunctions
$
U^0_\pm(x)=e^{\pm i\pi x_1}.
$
Now, the main term in the outer expansion \eqref{(K4)} becomes the linear combination of these eigenfunctions
\begin{equation}\nonumber\begin{array}{c}\displaystyle
U^0(x_1;\psi)=a_+(\psi)e^{+\pi ix_1}+a_-(\psi)e^{-\pi ix_1}.
\end{array}\label{(N7)}\end{equation}
Notice that again no dependence on $x_2$ occurs.
The main term in the inner expansion \eqref{(K6)} keeps the form \eqref{(K7)} but the correction terms look as follows:
\begin{equation}\nonumber\begin{array}{c}\displaystyle
w^\prime(\xi;\psi)=\pi i
(a_+(\psi)-a_-(\psi))W^1(\xi)+a^\prime(\psi)W^0
\end{array}\label{(N12)}\end{equation}
and
\begin{equation}\nonumber\begin{array}{c}\displaystyle
w^{\prime\prime}(\xi;\psi)=\pi^2(a_+(\psi)+a_-(\psi))W^3(\xi)+a^{\prime\prime}(\psi)W^0+ \widetilde{w}^{\,\prime\prime}(\xi;\psi).
\end{array}\end{equation}
Similarly to \eqref{(K12)}, \eqref{(K12N)}, the jump conditions  now read
\begin{equation}\begin{array}{c}\displaystyle
[U^\prime]_0(\psi)=2\pi i(a_+(\psi)-a_-(\psi)) m_1(\Xi),\quad
x_2\in(0,H),
\\\\\displaystyle
\Big[\frac{\partial U^\prime}{\partial x_1}\Big]_0(\psi)= \pi^2
 (a_+(\psi)+a_-(\psi))\frac{\vert\omega\vert}{H},\quad x_2\in(0,H).
\end{array}\label{(N14)}\end{equation}
Moreover, instead of \eqref{(K99)}, we have
\begin{equation}\nonumber\begin{array}{c}
e^{i\eta}=e^{i(\pi+\varepsilon\psi)}=e^{i\pi}(1+i \varepsilon\psi+O(\varepsilon^2))=
-1-i\varepsilon\psi+O(\varepsilon^2),
\end{array}\label{(N99)}\end{equation}
so that the somehow quasi-periodicity conditions of the type \eqref{(K14)} turn into
\begin{equation}\begin{array}{l}\displaystyle
U^\prime\Big(\frac{1}{2},x_2;\psi\Big)+U^\prime
\Big(-\frac{1}{2},x_2;\psi\Big)=-i\psi U^0\Big(-\frac{1}{2},x_2;\psi\Big)=-\psi(a_+(\psi)-a_-(\psi)), \vspace{0.3cm}\\
\displaystyle
\frac{\partial U^\prime}{\partial
x_1}\Big(\frac{1}{2},x_2;\psi\Big)+ \frac{\partial
U^\prime}{\partial x_1}\!\Big(-\frac{1}{2},x_2;\psi\Big)
=-i\psi\frac{\partial U^0}{\partial
x_1} \Big(-\frac{1}{2},x_2;\psi\Big)
=-i\pi\psi\big(a_+(\psi)\!+\!a_-(\psi)\big).
\end{array}\label{(N16)}\end{equation}
It is worth mentioning that the relations \eqref{(K14)} are nothing
but inhomogeneous pure periodicity conditions while the
relations \eqref{(N16)} imply inhomogeneous anti-periodicity conditions of
the function $U^\prime$.

The problem  \eqref{(K13)}, \eqref{(N14)}, \eqref{(N16)} with $\Lambda^0=\pi^2$
has two compatibility conditions which can be obtained by inserting the data
of \eqref{(N16)} and \eqref{(N14)} into the Green formula as follows:
\begin{equation*}
\begin{split}
\Lambda^\prime(\psi)Ha_\pm(\psi)&=-\int\limits_{\varpi^0}
\overline{e^{\pm\pi ix_1}}\left(\Delta U^\prime(x;\psi)+\Lambda^0 U^\prime(x;\psi)\right)dx\\
&=-\int\limits_0^H e^{\mp \pi ix_1} \Big( \frac{\partial
U^\prime}{\partial x_1}(x;\psi)\pm \pi
iU^\prime(x;\psi)\Big)\bigg\vert_{-\frac{1}{2}}^{x_1=\frac{1}{2}} dx_2
+\int\limits_0^H\Big[ \frac{\partial
U^\prime}{\partial x_1}(x;\psi)\pm \pi
iU^\prime(x;\psi)\Big]_0 \!dx_2.
\end{split}\end{equation*}
They convert into the system of two algebraic equations
\begin{equation}\begin{array}{l}\displaystyle
\Lambda^\prime(\psi)a_+(\psi)= \left(\!\pi^2\frac{\vert\omega\vert}{H} - 2\pi^2m_1(\Xi)+2\pi\psi\right)a_+(\psi)
+ \left(\pi^2\frac{\vert\omega\vert}{H}+ 2\pi^2m_1(\Xi)\right)a_-(\psi),\vspace{0.3cm}\\
\displaystyle
\Lambda^\prime(\psi)a_-(\psi)=\left(\pi^2\frac{\vert\omega\vert}{H}+ 2\pi^2m_1(\Xi)\right)a_+(\psi)+
\left(\pi^2\frac{\vert\omega\vert}{H} - 2\pi^2m_1(\Xi)-2\pi\psi\right)a_-(\psi),
\end{array}\label{(N17)}\end{equation}
with the eigenvalues
\begin{equation}\begin{array}{c}\displaystyle
\textcolor{black}{
\Lambda^{\prime}_\pm}(\psi)=2\pi\bigg(\textcolor{black}{\pi\Big(
\frac{\vert\omega\vert}{2H}- m_1(\Xi)\Big)}\pm \sqrt{\pi^2 \Big(
m_1(\Xi)\textcolor{black}{+}\frac{\vert\omega\vert}{2H}\Big)^2+\psi^2}\bigg),
\end{array}\label{(N18)}\end{equation}
where
\begin{equation}
\Lambda^{\prime}_-(\psi)\leq -4\pi^2m_1(\Xi)
\,\,\hbox{ and }\,\, \Lambda^{\prime}_+(\psi)\geq 2\pi^2\frac{\vert\omega\vert}{H}.
\label{(N18n)}\end{equation}
The corresponding $a^\pm(\psi)=(a^\pm_+(\psi),a^\pm_-(\psi))$ can be easily computed from the algebraic equations \eqref{(N17)}.
We again have $\Lambda^\prime_+(\psi)>
\Lambda^\prime_-(\psi)$ and therefore, we establish the relation of the eigenpairs $\{\Lambda^\prime_-(\psi),a^-(\psi)\}$
and $\{\Lambda^\prime_+(\psi),a^+(\psi)\}$, respectively, with  the eigenpairs
$\{\Lambda^\varepsilon_1(\eta),U^\varepsilon_1(x;\psi)\}$ and
$\{\Lambda^\varepsilon_2(\eta),U^\varepsilon_2(x;\psi)\}$ of the problem
\eqref{(11)}--\eqref{(14)} with $\eta$ defined by \eqref{(N4)}.

Now, we formulate our result on splitting edges of the first and  second limit spectral bands
giving rise to the open gap $\gamma_1^\varepsilon$ (cf.~Figure~\ref{fig3} a) and b)); here, we take into account the $2\pi-$periodicity in $\eta$ of the functions $\Lambda_p^\varepsilon(\eta)$.

\begin{Theorem}\label{Theorem_multiple2}
{\color{black}Let $H\in(0,1)$ and $\psi_1>0$. Then, } there exist positive $\varepsilon_0=\varepsilon_0(H,\psi_1)$ and
$C=C(H,\psi_1)$ such that, for $\varepsilon\in(0,\varepsilon_0]$, the
entries $\Lambda^\varepsilon_1(\eta)$ and
$\Lambda^\varepsilon_2(\eta)$ of the eigenvalue sequence
\eqref{(17)} with $\eta=\pi+\varepsilon\psi$, {\color{black}$\vert\psi\vert\leq\psi_1$,} meet the estimates
\begin{equation}\nonumber\begin{array}{c}\displaystyle
\vert\Lambda^\varepsilon_2({\color{black}\pi+}\varepsilon\psi)-\pi^2-\varepsilon{\color{black}\Lambda_+^{\prime}}(\psi)\vert\leq
{\color{black}C\varepsilon^2}, \vspace{0.2cm}\\
\vert\Lambda^\varepsilon_1({\color{black}\pi+}\varepsilon\psi)-\pi^2-\varepsilon{\color{black}\Lambda_-^{\prime}}(\psi)\vert\leq
{\color{black}C\varepsilon^2},
\end{array}\label{(MEP12)}\end{equation}
where the quantities ${\color{black}\Lambda_\pm^{\prime}}(\psi)$ are given by
\eqref{(N18)}.
\end{Theorem}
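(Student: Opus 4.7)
The plan is to mirror the scheme of Theorem~\ref{Theorem_multiple1} already carried out in Sections~\ref{subsec61}--\ref{subsec63}, replacing the node $(0,4\pi^2)$ by $(\pi,\pi^2)$ throughout and using the new fast Floquet variable \eqref{(N4)}. As in Section~\ref{subsec62}, I would reformulate the problem via the operator ${\cal B}^\varepsilon(\eta)$ acting on ${\cal H}^\varepsilon(\eta)$, define the approximate eigenvalues
$$M^{1,\varepsilon}_\pm(\psi)=(1+\pi^2+\varepsilon\Lambda^\prime_\pm(\psi))^{-1}$$
with $\Lambda^\prime_\pm(\psi)$ given by \eqref{(N18)}, and build the approximate eigenfunctions $U^\varepsilon_\pm(x;\psi)$ exactly as in \eqref{(ra2)} but from the new data: the main term is the linear combination $a^\pm_+(\psi)e^{i\pi x_1}+a^\pm_-(\psi)e^{-i\pi x_1}$ whose coefficient vector $a^\pm(\psi)$ is the normalized eigenvector of the $2\times 2$ system \eqref{(N17)} associated with $\Lambda^\prime_\pm(\psi)$, while the correction term $U^\prime_\pm$ is a solution of \eqref{(K13)}--with $\Lambda^0=\pi^2$--subject to the jump conditions \eqref{(N14)} and the anti-periodicity conditions \eqref{(N16)}. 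The compatibility of this boundary value problem is precisely \eqref{(N17)}, and \eqref{(ra6)}--\eqref{(ra7)} go through with analogous constants depending on $\psi$ linearly.

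Next, I would construct the remainder $R^\varepsilon_\pm\in H^2(\varpi^0)$ so that $U^\varepsilon_\pm$ satisfies the \emph{full} quasi-periodicity conditions \eqref{(12)}--\eqref{(13)} with $\eta=\pi+\varepsilon\psi$; here the Taylor expansion is $e^{i(\pi+\varepsilon\psi)}=-1-i\varepsilon\psi+O(\varepsilon^2)$, giving traces at $x_1=\pm 1/2$ of order $\varepsilon^2$ as in \eqref{(ra5)}, hence $\|R^\varepsilon_\pm;H^2(\varpi^0)\|\le C|\psi|(1+|\psi|)$. The Neumann condition on the holes is automatic by the boundary layer corrections $\widetilde{w}^{\,\prime}_\pm$ and $\widehat{w}^{\,\prime\prime}_\pm$ constructed as in \eqref{(ra4)}--\eqref{(ra45)}, and the Neumann condition on the lateral sides $x_2=0,H$ uses the symmetry assumption \eqref{(symm)} through Lemma~\ref{Lemma_W}, exactly as in Section~\ref{subsec62}.

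Then I would estimate the discrepancy $\delta^\varepsilon_\pm(\psi)$ by expanding $\Delta_x U^\varepsilon_\pm+(\pi^2+\varepsilon\Lambda^\prime_\pm(\psi))U^\varepsilon_\pm$ into a sum $\sum_{j=1}^{10}S^\varepsilon_{j,\pm}$ analogous to \eqref{(un5bis)}. The terms $S^\varepsilon_{1,\pm}$ and $S^\varepsilon_{2,\pm}$ vanish by construction, the cancellation $T^\varepsilon_{2,\pm}=T^\varepsilon_{3,\pm}=0$ of Section~\ref{subsec63} carries over because the jump relations \eqref{(N14)} have the same structure as \eqref{(K12)},\eqref{(K12N)}, and the remaining estimates use the Taylor formula in thin strips, the commutator bounds, the exponential decay of $\widetilde{W}^{\,1}_0$ and $\widetilde{w}^{\,\prime\prime}_\pm$ together with the weighted Hardy-type inequality \eqref{(MEP1)}. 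This yields $\delta^\varepsilon_\pm(\psi)\le c(\psi_1)\varepsilon^2$ uniformly for $|\psi|\le\psi_1$ and $\varepsilon\le\varepsilon_0(\psi_1)$. Lemma~\ref{Lemma_Visik} and the conversion \eqref{(J4)}, combined with $\|U^\varepsilon_\pm;{\cal H}^\varepsilon(\pi+\varepsilon\psi)\|^2\to(1+\pi^2)H$, then produce eigenvalues $\Lambda^\varepsilon_\pm(\pi+\varepsilon\psi)$ of \eqref{(11)}--\eqref{(14)} obeying the desired $\varepsilon^2$-bounds.

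The main obstacle is the identification step: I must show that the two eigenvalues produced by Lemma~\ref{Lemma_Visik} are precisely $\Lambda^\varepsilon_1(\pi+\varepsilon\psi)$ and $\Lambda^\varepsilon_2(\pi+\varepsilon\psi)$ of the ordered sequence \eqref{(17)}. Since $\Lambda^\prime_+(\psi)-\Lambda^\prime_-(\psi)=4\pi\sqrt{\pi^2(m_1(\Xi)+|\omega|/(2H))^2+\psi^2}>0$, the approximations are distinct for $\varepsilon$ small, so the two eigenvalues produced are distinct. From the bounds above and \eqref{(N18)}, both lie in $[0,\pi^2+K_1/2]$ for $|\psi|\le\psi_1$ and $\varepsilon$ small enough, where $K_1$ is the constant of Proposition~\ref{PropoR12} with $\delta_1$ chosen so that $\pi+\varepsilon\psi\in[-\pi+\delta_1,\pi-\delta_1]$ (modulo $2\pi$, using the $2\pi$-periodicity of $\eta\mapsto\Lambda^\varepsilon_p(\eta)$ noted after \eqref{(18)}). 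Proposition~\ref{PropoR12} then forces $\Lambda^\varepsilon_3(\pi+\varepsilon\psi)>\pi^2+K_1$, so the two found eigenvalues must be $\Lambda^\varepsilon_1$ and $\Lambda^\varepsilon_2$, and the ordering $\Lambda^\prime_-<\Lambda^\prime_+$ assigns each sign to the correct index, completing the proof.
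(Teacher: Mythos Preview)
Your proposal follows exactly the scheme the paper sketches in Section~\ref{subsec64}: transplant Sections~\ref{subsec61}--\ref{subsec63} to the node $(\pi,\pi^2)$, with the anti-periodicity conditions \eqref{(N16)} replacing \eqref{(K14)} and the algebraic system \eqref{(N17)} replacing \eqref{(K16)}. The construction of $M^{1,\varepsilon}_\pm$, $U^\varepsilon_\pm$, the boundary-layer terms, the remainder $R^\varepsilon_\pm$, the ten-term decomposition of the discrepancy, and the invocation of Lemma~\ref{Lemma_Visik} are all correct and match the paper.

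There is one slip in the identification step. You claim $\Lambda^\varepsilon_3(\pi+\varepsilon\psi)>\pi^2+K_1$ follows from Proposition~\ref{PropoR12} after choosing $\delta_1$ so that $\pi+\varepsilon\psi\in[-\pi+\delta_1,\pi-\delta_1]$ modulo $2\pi$. But modulo $2\pi$ the point $\pi+\varepsilon\psi$ is still adjacent to $\pm\pi$, so no admissible $\delta_1>0$ puts it inside $I_1$; moreover, the bound involving $K_1$ concerns $\Lambda_2^\varepsilon$, not $\Lambda_3^\varepsilon$. The fix is immediate: invoke \eqref{EstL3} instead of \eqref{EstL2}. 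That inequality gives $\Lambda^\varepsilon_3(\eta)>\pi^2+K_2$ for \emph{all} $\eta\in[-\pi,\pi]$ with no restriction, and since by \eqref{(N18)} and your $\varepsilon^2$-bound both produced eigenvalues lie below $\pi^2+K_2$ for $\varepsilon$ small, they must be $\Lambda^\varepsilon_1$ and $\Lambda^\varepsilon_2$, ordered by $\Lambda'_-<\Lambda'_+$.
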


\section{Opening the spectral gaps}\label{sec7}

In this section, we show that, under the mirror symmetry condition of the holes, cf. \eqref{(symm)},   there are open spectral gaps for the spectrum \eqref{(8)} of the original problem \eqref{(4)}--\eqref{(5)} in the perforated waveguide $\Pi^\ee$, cf. \eqref{(3)}; see also Figures~\ref{fig1} and \ref{fig-mirror}. Further specifying, for the values $H\in  (0,1)$  we show that there is at least one open gap while for $H\in (0,1/2)$ there are at least two open gaps. We provide asymptotic formulas for their localization and width, cf. Figure~\ref{fig3} b) and a) respectively and formulas \eqref{(L1)}--\eqref{(MEP94)},  \eqref{(LL1)} and \eqref{(MEP99)}. In Sections~\ref{subsec71} and \ref{subsec72}, respectively, we broach the cases where $H\in (0,1)$   and  $H\in (0,1/2)$.

\subsection{Opening spectral gap near the  node $(\eta_{\mbox{\tiny$\square$}}, \Lambda_{\mbox{\tiny$\square$}})$}\label{subsec71}

Recall $(\eta_{\mbox{\tiny$\square$}}, \Lambda_{\mbox{\tiny$\square$}})=(\pm\pi,\pi^2)$. Based on asymptotic formulas in Theorems~\ref{Th5.1} and \ref{Theorem_multiple2},
we prove in this section that
\begin{equation}\begin{array}{c}\displaystyle
\max\limits_{\eta\in[-\pi,\pi]}\Lambda^\varepsilon_1(\eta)\leq \pi^2-4\pi^2\varepsilon
m_1(\Xi) +O(\varepsilon^2),
\vspace{0.2cm}\\\displaystyle
\min_{\eta\in[-\pi,\pi]}\Lambda^\varepsilon_2(\eta)\geq \pi^2+2\pi^2\varepsilon
\frac{\vert\omega\vert}{H}+O(\varepsilon^2).
\end{array}\label{(L1)}\end{equation}
In this way, since $m_1(\Xi)\pm (2H)^{-1}\vert\omega\vert>0$ (see Proposition~\ref{Proposition_W1}), the spectral gap
\begin{equation}\begin{array}{c}\displaystyle
\gamma^\varepsilon_p=(\max_\eta\Lambda^\varepsilon_p(\eta),\min_\eta\Lambda^\varepsilon_{p+1}(\eta))
\end{array}\label{(L2)}\end{equation}
with $p=1$ stays open and has the width
\begin{equation}\begin{array}{c}\displaystyle
\vert\gamma^\varepsilon_1\vert\geq 4\pi^2\varepsilon
\Big(m_1(\Xi)+\frac{\vert\omega\vert}{2H}\Big)+O(\varepsilon^2).
\end{array}\label{(MEP94)}\end{equation}

Let us prove \eqref{(L1)} for $H\in(0,1)$.
We divide the proof in two parts depending on whether $\eta\in I_1$ or $\eta\in I_2$ where the sets $I_1=[-\pi+\delta_1, \pi-\delta_1]$  and $I_2=[-\pi,-\pi+\delta_1]\cup[\pi-\delta_1,\pi]$ for certain  $\delta_1\in(0,\pi)$, cf. Figure~\ref{fig_cajas}. For simplicity, we choose $\delta_1$ such that  $\Lambda_-^0(\pi-\delta_1)=(\pi+\delta_1)^2<\pi^2+K_2$ where $K_2$ is defined by \eqref{def_K1yK2}. Thus, by Proposition~\ref{PropoR12}, we have that there exists $\varepsilon_1=\varepsilon(H,\delta_1)>0$ such that
\begin{eqnarray}
\Lambda_2^\varepsilon(\eta)>\pi^2+K_1\qquad &&\mbox{for } \eta\in I_1, \, \varepsilon<\varepsilon_1,  \label{EstL2bis} \\
\Lambda_3^\varepsilon(\eta)>\pi^2+K_2\qquad &&\mbox{for } \eta\in I_2, \, \varepsilon<\varepsilon_1,   \label{EstL3bis}
\end{eqnarray}
where $K_1$ and  $K_2$ are defined by \eqref{def_K1yK2} and $K_1$ may depend on $\delta_1$. In addition, when $\eta\in I_2$, we separate again into two parts $\eta\in I_2\cap\{\eta\, : \, \pi-\vert\eta\vert\leq \varepsilon\psi_1\}$ and $\eta\in I_2\cap\{\eta\, : \, \pi-\vert\eta\vert\geq \varepsilon\psi_1\}$ for a certain constant $\psi_1>0$ that we will determine below.

\begin{figure}[ht]
\begin{center}
\resizebox{!}{5cm} {\includegraphics{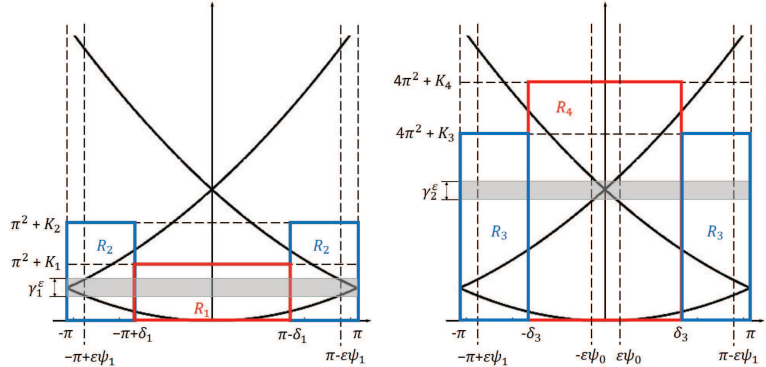}}
\caption{The different boxes $R_p$ for $p=1,2,3,4.$}
\label{fig_cajas}
\end{center}
\end{figure}

Firstly, we estimate $\Lambda_1^\varepsilon(\eta)$ and $\Lambda_2^\varepsilon(\eta)$ for $\eta\in I_1$ where \eqref{EstL2bis} holds, namely, the case where, for $\varepsilon$ small enough, there cannot be more than one eigenvalue $\Lambda_p^\varepsilon(\eta)$ in the box $R_1:=I_1\times [0,\pi^2+K_1].$ Thus, it is evident that
\begin{equation*}
\Lambda^\varepsilon_2(\eta)\geq \pi^2+2\pi^2\varepsilon \frac{\vert\omega\vert}{H}+O(\varepsilon^2)
\quad \mbox{for } \eta\in I_1.
\end{equation*}
Besides,  by Corollary~\ref{Th5.2}, we have
$$
\Lambda^\varepsilon_1(\eta) \leq \Lambda_1^0(\eta)+C_0\varepsilon\leq  (\pi-\delta_1)^2+ C_0\varepsilon \leq  \pi^2-4 \pi^2 m_1(\Xi)\varepsilon
\quad \mbox{for } \eta\in I_1
$$
and $\varepsilon$ small enough, which concludes the proof in $I_1.$

Secondly, we estimate $\Lambda_1^\varepsilon(\eta)$ and $\Lambda_2^\varepsilon(\eta)$ for $\eta\in I_2$ where \eqref{EstL3bis} holds, namely, the case where, for $\varepsilon$ small enough,  there cannot be more than two eigenvalues $\Lambda_p^\varepsilon(\eta)$ in the boxes $R_2:=I_2\times [0,\pi^2+K_2].$
Now, for any $\psi_1>0$, Theorem~\ref{Theorem_multiple2} and \eqref{(N18n)} allow us to obtain, for $\varepsilon$ small enough, the
extremum in \eqref{(L1)} restricted to $\eta\in I_2\cap\{\eta\, : \, \pi-\vert\eta\vert\leq \varepsilon\psi_1\}$.
Moreover, for $C_0$ the constant arising in \eqref{VisikL1} and \eqref{VisikLpm},  fixing $$\psi_1>C_0/2\pi,$$ we observe that the eigenvalues $\Lambda_\star^\varepsilon(\eta), \, \Lambda_-^\varepsilon(\eta)$ defined by Theorem~\ref{Th5.1} satisfy
$$\Lambda_-^\varepsilon(\eta)-\Lambda_\star^\varepsilon(\eta)\!\geq\!
\Lambda_-^0(\eta)-\Lambda_1^0(\eta)-2C_0\varepsilon\!=\!4\pi(\pi-\eta)-2C_0\varepsilon>0\,\,$$
for $\eta>0, \pi-\eta\geq \varepsilon \psi_1,$ and
$\Lambda_-^\varepsilon(\eta)\leq\Lambda_-^0(\eta)+C_0\varepsilon\leq\Lambda_-^0(\pi-\delta_1)+C_0\varepsilon\leq \pi^2+K_2$ for $\eta\in[\pi-\delta_1,\pi]$ and $\varepsilon$ small enough.
As a consequence, we can identify $\Lambda_1^\varepsilon(\eta)=\Lambda_\star^\varepsilon(\eta)$ and $\Lambda_2^\varepsilon(\eta)=\Lambda_-^\varepsilon(\eta)$ for $\eta\in[\pi-\delta_1,\pi-\varepsilon \psi_1]$, cf.~\eqref{EstL3bis}.
Thus, using Theorem~\ref{Th5.1} and taking
$$\psi_1=\max\left\{\frac{4\pi^2 m_1(\Xi)+C_0}{\pi}, \frac{C_0H+2\pi^2\vert\omega\vert}{2\pi H}  \right\},$$
for $\varepsilon$ small enough, we have
\begin{equation*}
  \begin{split}
    \Lambda^\varepsilon_1(\eta)&\leq \Lambda_1^0(\eta)+C_0\varepsilon = \pi^2-(\pi+\eta)(\pi-\eta)+C_0\varepsilon  \vspace{0.1cm}\\
       & \leq \pi^2 -\pi(\pi-\eta)+C_0\varepsilon  \leq \pi^2-4\pi^2 m_1(\Xi)\varepsilon \qquad \mbox{for } \eta\in [\pi-\delta_1, \pi-\varepsilon \psi_1], \vspace{0.2cm} \\
   \Lambda^\varepsilon_2(\eta) & \geq \Lambda_-^0(\eta)-C_0\varepsilon =  \pi^2 +(3\pi-\eta)(\pi-\eta)-C_0\varepsilon  \\
       &\geq \pi^2+2\pi(\pi-\eta) -C_0\varepsilon  \geq \pi^2+2\pi^2\frac{\vert\omega\vert}{H}\varepsilon \qquad \mbox{for } \eta\in [\pi-\delta_1, \pi-\varepsilon \psi_1].
   \end{split}
\end{equation*}
In a similar way, we can estimate $\Lambda_1^\varepsilon(\eta)$ and  $\Lambda_2^\varepsilon(\eta)$ for $\eta\in[-\pi+\varepsilon\psi_1, -\pi+\delta_1],$ where now $\Lambda_1^\varepsilon(\eta)=\Lambda_\star^\varepsilon(\eta)$ and $\Lambda_2^\varepsilon(\eta)=\Lambda_+^\varepsilon(\eta)$. This concludes the proof for $\eta\in I_2.$

\medskip

Now we formulate our result on opening spectral gap $\gamma^\varepsilon_1$ (see Figure~\ref{fig3} a)--b)):

\begin{Theorem}
\label{Corollary_dos}
Let $H\in(0,1)$.  Then,  there exists a positive constant $\varepsilon_0=\varepsilon_0(H)$ such that, for $\varepsilon\in(0,\varepsilon_0]$, the asymptotic formulas \eqref{(L1)}  are valid and the gap \eqref{(L2)} with $p=1$  has positive length \eqref{(MEP94)}.
\end{Theorem}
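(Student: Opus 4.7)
The plan is to establish the two bounds in \eqref{(L1)} separately; the gap width \eqref{(MEP94)} then follows directly from the subtraction, since $m_1(\Xi)+|\omega|/(2H)>0$ guarantees that the right-hand side is positive. To do this, I would partition $[-\pi,\pi]$ into $I_1=[-\pi+\delta_1,\pi-\delta_1]$ and $I_2=[-\pi,\pi]\setminus I_1$, fixing $\delta_1\in(0,\pi)$ small enough so that $\Lambda^0_-(\pi-\delta_1)=(\pi+\delta_1)^2<\pi^2+K_2$, where $K_2$ is given by \eqref{def_K1yK2}. This choice makes Proposition~\ref{PropoR12} available on both halves: on $I_1$, $\Lambda^\varepsilon_2(\eta)>\pi^2+K_1$, while on $I_2$, $\Lambda^\varepsilon_3(\eta)>\pi^2+K_2$, so at most one (resp.\ two) eigenvalues of the model problem can lie below $\pi^2+K_1$ on $I_1$ (resp.\ below $\pi^2+K_2$ on $I_2$).

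On $I_1$, the lower bound on $\Lambda^\varepsilon_2$ is immediate from Proposition~\ref{PropoR12}, since $K_1$ is a fixed positive constant dominating $2\pi^2\varepsilon|\omega|/H+O(\varepsilon^2)$ for $\varepsilon$ small. The upper bound on $\Lambda^\varepsilon_1$ comes from Corollary~\ref{Th5.2}: one has $\Lambda^\varepsilon_1(\eta)\leq\eta^2+C_0\varepsilon\leq(\pi-\delta_1)^2+C_0\varepsilon$, which is well below $\pi^2-4\pi^2\varepsilon\, m_1(\Xi)$ for small $\varepsilon$, since $\delta_1$ is fixed.

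On $I_2$, I further split into $I_2^{\rm near}=I_2\cap\{\eta:\pi-|\eta|\leq\varepsilon\psi_1\}$ and $I_2^{\rm far}=I_2\setminus I_2^{\rm near}$ for a constant $\psi_1>0$ to be tuned. On $I_2^{\rm near}$, Theorem~\ref{Theorem_multiple2} directly supplies the two-term asymptotics of $\Lambda^\varepsilon_1(\pi+\varepsilon\psi)$ and $\Lambda^\varepsilon_2(\pi+\varepsilon\psi)$ in terms of $\Lambda^\prime_\mp(\psi)$ from \eqref{(N18)}, and the universal monotonicity inequalities \eqref{(N18n)}, $\Lambda^\prime_-(\psi)\leq-4\pi^2m_1(\Xi)$ and $\Lambda^\prime_+(\psi)\geq2\pi^2|\omega|/H$, give exactly the claimed bounds uniformly in $|\psi|\leq\psi_1$. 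On $I_2^{\rm far}$ one appeals to Theorem~\ref{Th5.1}, which supplies eigenvalues $\Lambda^\varepsilon_\star(\eta)$ and $\Lambda^\varepsilon_\pm(\eta)$ approximating the limit curves $\eta^2$ and $(\eta\pm2\pi)^2$ up to $O(\varepsilon)$. A direct Taylor expansion of $\eta^2$ and $(\eta-2\pi)^2$ at $\eta=\pi$ then gives, on $[\pi-\delta_1,\pi-\varepsilon\psi_1]$, the inequalities $\Lambda^\varepsilon_\star(\eta)\leq\pi^2-\pi(\pi-\eta)+C_0\varepsilon\leq\pi^2-4\pi^2m_1(\Xi)\varepsilon$ and $\Lambda^\varepsilon_-(\eta)\geq\pi^2+2\pi(\pi-\eta)-C_0\varepsilon\geq\pi^2+2\pi^2\varepsilon|\omega|/H$, provided $\psi_1$ is chosen large enough, for instance
\[
\psi_1=\max\!\left\{\tfrac{4\pi^2m_1(\Xi)+C_0}{\pi},\ \tfrac{C_0 H+2\pi^2|\omega|}{2\pi H}\right\}.
\]
The mirror argument applies on $[-\pi+\varepsilon\psi_1,-\pi+\delta_1]$ using $\Lambda^\varepsilon_+(\eta)$.

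The main obstacle is the \emph{identification} step on $I_2^{\rm far}$: Theorem~\ref{Th5.1} only produces eigenvalues close to given values, not that they are precisely the first and second entries of the ordered sequence \eqref{(17)}. To pin them down, one must combine the counting bound \eqref{EstL3bis} (which caps the number of eigenvalues in the box $I_2\times[0,\pi^2+K_2]$ by two) with a separation estimate $\Lambda^\varepsilon_-(\eta)-\Lambda^\varepsilon_\star(\eta)\geq 4\pi(\pi-\eta)-2C_0\varepsilon>0$, which forces the choice $\psi_1>C_0/(2\pi)$ and thereby ensures $\Lambda^\varepsilon_1=\Lambda^\varepsilon_\star$, $\Lambda^\varepsilon_2=\Lambda^\varepsilon_-$ on $[\pi-\delta_1,\pi-\varepsilon\psi_1]$. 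Once this identification is done, the bounds on $I_1$, $I_2^{\rm near}$ and $I_2^{\rm far}$ glue together to yield \eqref{(L1)}, and subtraction of the two extrema gives the gap width \eqref{(MEP94)}, completing the proof.
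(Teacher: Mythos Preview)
Your proposal is correct and follows essentially the same approach as the paper's own proof: the same partition into $I_1$ and $I_2$, the same near/far split of $I_2$ at scale $\varepsilon\psi_1$, the same use of Proposition~\ref{PropoR12}, Corollary~\ref{Th5.2}, Theorem~\ref{Theorem_multiple2}, and Theorem~\ref{Th5.1} on the respective pieces, and even the same explicit choice of $\psi_1$. You have also correctly isolated the identification step on $I_2^{\rm far}$ as the crux and handled it exactly as the paper does, via the separation estimate together with the counting bound from \eqref{EstL3}.
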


\subsection{Opening spectral gap near the node
$(\eta_{\mbox{\small{$\circ$}}},\Lambda_{\mbox{\small{$\circ$}}})$} \label{subsec72}

Recall $(\eta_{\mbox{\small{$\circ$}}},\Lambda_{\mbox{\small{$\circ$}}})=(0,4\pi^2)$. Similar computations on the base of Theorems~\ref{Th5.1}, \ref{Theorem_multiple1} and \ref{Theorem_multiple2}
prove that
\begin{equation}\begin{array}{c}\displaystyle
\max_{\eta\in[-\pi,\pi]}\Lambda^\varepsilon_2(\eta)\leq4\pi^2-16\pi^2\varepsilon
m_1(\Xi)+O(\varepsilon^2) ,
\\\displaystyle
\min_{\eta\in[-\pi,\pi]}\Lambda^\varepsilon_3(\eta)\geq4\pi^2+8\pi^2\varepsilon
\frac{\vert\omega\vert}{H}+O(\varepsilon^2),
\end{array}\label{(LL1)}\end{equation}
so that the gap \eqref{(L2)} with $p=2$ opens and gets  the width
\begin{equation}\begin{array}{c}\displaystyle
\vert\gamma^\varepsilon_2\vert\geq 16\pi^2\varepsilon
\Big(m_1(\Xi)+\frac{\vert\omega\vert}{2H}\Big)+O(\varepsilon^2).
\end{array}\label{(MEP99)}\end{equation}

Let us prove \eqref{(LL1)} for $H\in(0,1/2)$.
Now, we divide the proof in two parts depending on whether $\eta\in I_3$ or $\eta\in I_4$ where the sets $I_3=[-\pi,-\delta_3]\cup[\delta_3,\pi]$ and
$I_4=[-\delta_3, \delta_3]$ for certain $\delta_3\in (0,\pi)$, cf. Figure~\ref{fig_cajas}. For simplicity, we choose  $\delta_3$ such that $\Lambda_+^0(\delta_3)=(2\pi+\delta_3)^2<4\pi^2+K_4$ where $K_4$ is  defined by \eqref{def_K3yK4}. Thus, by Proposition~\ref{PropoR34} we have that there  exists $\varepsilon_1=\varepsilon(H,\delta_3)>0$ such that
\begin{eqnarray}
\Lambda_3^\varepsilon(\eta)>4\pi^2+K_3\qquad &&\mbox{for } \eta\in I_3, \, \varepsilon<\varepsilon_1,  \label{EstL3bbis}\\
\Lambda_4^\varepsilon(\eta)>4\pi^2+K_4\qquad &&\mbox{for } \eta\in  I_4, \, \varepsilon<\varepsilon_1,  \label{EstL4bis}
\end{eqnarray}
where $K_3$ and $K_4$ are  defined by \eqref{def_K3yK4} and $K_3$ may depend on $\delta_3$. In addition, when $\eta\in I_3$ or $\eta\in I_4$,  we separate again into two parts, namely,
we distinguish the four cases $\eta\in I_3\cap\{\eta\, : \, \pi-\vert\eta\vert\leq \varepsilon\psi_1\}$, $\eta\in I_3\cap\{\eta\, : \, \pi-\vert\eta\vert\geq \varepsilon\psi_1\}$,
$\eta\in [-\varepsilon \psi_0, \varepsilon \psi_0]\subset I_4$ and  $\eta\in I_4\cap\{\eta\, : \, \vert\eta\vert\geq \varepsilon\psi_0\}$ for a certain $\psi_0, \psi_1>0$.

Firstly, we estimate $\Lambda_2^\varepsilon(\eta)$ and $\Lambda_3^\varepsilon(\eta)$ for $\eta\in I_3$ where \eqref{EstL3bbis} holds, namely, the case where, for $\varepsilon$ small enough, there cannot be more than two eigenvalues $\Lambda_p^\varepsilon(\eta)$ in the boxes $R_3:=I_3\times [0,4\pi^2+K_3].$ Thus, it is evident that
\begin{equation*}
\Lambda^\varepsilon_3(\eta)\geq4\pi^2+8\pi^2\varepsilon \frac{\vert\omega\vert}{H}+O(\varepsilon^2)
\quad \mbox{for } \eta\in I_3=[-\pi,-\delta_3]\cup[\delta_3,\pi].
\end{equation*}
Besides, for any $\psi_1>0$, by virtue of Theorem~\ref{Theorem_multiple2} and \eqref{(N18)}, we get that
$$\Lambda_2^\varepsilon(\eta)\leq \pi^2+K(\psi_1)\varepsilon <2\pi^2
\qquad \mbox{for }\eta\in I_3\cap\{\eta\, : \, \pi-\vert\eta\vert\leq \varepsilon\psi_1\}$$
and $\varepsilon$ small enough.
Now, fixing $\psi_1>C_0/2$ and repeating the arguments  in the previous Section~\ref{subsec71} related with the set $I_2$, we can identify $\Lambda_2^\varepsilon(\eta)=\Lambda_-^\varepsilon(\eta)$ for $\eta\in[\delta_3,\pi-\varepsilon \psi_1]$ and $\Lambda_2^\varepsilon(\eta)=\Lambda_+^\varepsilon(\eta)$ for $\eta\in[-\pi+\varepsilon \psi_1, -\delta_3].$
Thus, by virtue of Theorem~\ref{Th5.1}, we can  check that
\begin{equation*}
\Lambda^\varepsilon_2(\eta)\leq 4\pi^2-16\pi^2\varepsilon m_1(\Xi)+O(\varepsilon^2)
\quad \mbox{for } \eta\in I_3\cap\{\eta\, : \, \pi-\vert\eta\vert\geq \varepsilon\psi_1\}.
\end{equation*}
and $\varepsilon$ small enough. This concludes the proof on the interval $I_3.$

Secondly, we estimate $\Lambda_2^\varepsilon(\eta)$ and $\Lambda_3^\varepsilon(\eta)$ when $\eta\in I_4$ where \eqref{EstL4bis} holds, namely, the case where, for $\varepsilon$ small enough,  there cannot be more than three eigenvalues $\Lambda_p^\varepsilon(\eta)$ in the box $R_4:=I_4\times [0,4\pi^2+K_4].$
Now, for any $\psi_0>0$, Theorem~\ref{Theorem_multiple1} and \eqref{(K17n)} allow us to obtain, for $\varepsilon$ small enough,
the extremum in \eqref{(LL1)} restricted to $\{\eta=\varepsilon\psi \, : \, \vert\psi\vert\leq \psi_0\}.$
Moreover, fixing $\psi_0>C_0/4\pi$, we observe that the eigenvalues $\Lambda_\pm^\varepsilon(\eta)$ defined by Theorem~\ref{Th5.1} satisfy
$$\Lambda_+^\varepsilon(\eta)-\Lambda_-^\varepsilon(\eta)\geq\Lambda_+^0(\eta)-\Lambda_-^0(\eta)-2C_0\varepsilon=8\pi\eta-2C_0\varepsilon>0\quad \mbox{for }\eta\geq\varepsilon \psi_0,$$
and
$\Lambda_+^\varepsilon(\eta)\leq\Lambda_+^0(\eta)+C_0\varepsilon\leq\Lambda_+^0(\delta_3)+C_0\varepsilon\leq 4\pi^2 +K_4$, for $\eta\in[0,\delta_3]$ and $\varepsilon$ small enough.
As a consequence, we can identify $\Lambda_2^\varepsilon(\eta)=\Lambda_-^\varepsilon(\eta)$ and $\Lambda_3^\varepsilon(\eta)=\Lambda_+^\varepsilon(\eta)$ for
$\eta\in[\varepsilon \psi_0,\delta_3].$ Note that, by Corollary~\ref{Th5.2}, $\Lambda_1^\varepsilon(\eta)=\Lambda_\star^\varepsilon(\eta)$ for $\eta\in [-\delta_3,\delta_3]$ and there cannot be more than three eigenvalues $\Lambda_p^\varepsilon(\eta)$ in the box $I_4\times [0,4\pi^2+K_4].$
Thus, using again Theorem~\ref{Th5.1} and taking
$$\psi_0=\max\left\{\frac{16\pi^2 m_1(\Xi)+C_0}{3\pi}, \frac{C_0H+8\pi^2\vert\omega\vert}{4\pi H}  \right\},$$
for $\varepsilon$ small enough, we have
\begin{equation*}
\begin{split}
    \Lambda^\varepsilon_2(\eta)&\leq \Lambda_-^0(\eta)+C_0\varepsilon =4\pi^2-(4\pi-\eta)\eta+C_0\varepsilon  \leq  4\pi^2-3\pi\eta +C_0\varepsilon \vspace{0.15cm}\\ & \leq 4\pi^2-16\pi^2 m_1(\Xi)\varepsilon  \qquad \mbox{for } \eta\in [\varepsilon \psi_0,\delta_3],  \vspace{0.3cm}\\
    \Lambda^\varepsilon_3(\eta)&\geq \Lambda_+^0(\eta)-C_0\varepsilon=4\pi^2+(4\pi+\eta)\eta-C_0\varepsilon  \geq  4\pi^2+4\pi\eta -C_0\varepsilon \\&  \geq 4\pi^2+8\pi^2\frac{\vert\omega\vert}{H}\varepsilon  \qquad \mbox{for } \eta\in [\varepsilon \psi_0,\delta_3].
     \end{split}
\end{equation*}
In a similar way, we can estimate $\Lambda_2^\varepsilon(\eta)$ and  $\Lambda_3^\varepsilon(\eta)$ for $\eta\in[-\delta_3,-\varepsilon\psi_0],$ where now $\Lambda_2^\varepsilon(\eta)=\Lambda_+^\varepsilon(\eta)$ and $\Lambda_3^\varepsilon(\eta)=\Lambda_-^\varepsilon(\eta)$. This concludes the proof for $\eta\in I_4.$

\medskip

Now we formulate our result on opening spectral gap $\gamma^\varepsilon_2$ (see Figure~\ref{fig3} a)):

\begin{Theorem}\label{Corollary_uno}
Let $H\in(0,1/2)$.  Then,  there exists a positive constant $\varepsilon_0=\varepsilon_0(H)$ such that, for $\varepsilon\in(0,\varepsilon_0]$, the asymptotic formulas \eqref{(LL1)} are valid and the gap \eqref{(L2)} with  \emph{}$p=2$ has positive length  \eqref{(MEP99)}.
\end{Theorem}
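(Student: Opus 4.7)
The plan is to mimic the scheme already carried out for Theorem~\ref{Corollary_dos}, but working around the node $(\eta_{\mbox{\small{$\circ$}}},\Lambda_{\mbox{\small{$\circ$}}})=(0,4\pi^2)$, which is two-fold in the limit and is reached by the dispersion curves $\Lambda_\pm^0(\eta)=(\eta\pm 2\pi)^2$. The first move is to split the Floquet interval $[-\pi,\pi]$ into a set $I_4=[-\delta_3,\delta_3]$ where the two relevant dispersion curves cross, and its complement $I_3=[-\pi,-\delta_3]\cup[\delta_3,\pi]$. The cut $\delta_3$ will be fixed (and chosen small enough that $(2\pi+\delta_3)^2<4\pi^2+K_4$ with $K_4$ in \eqref{def_K3yK4}), so that Proposition~\ref{PropoR34} applies and tells us how many eigenvalues of \eqref{(11)}--\eqref{(14)} can live inside the relevant boxes.

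On $I_3$, where $\Lambda_\pm^0(\eta)$ stay simple and bounded away from each other, the uniform estimates of Theorem~\ref{Th5.1} pin $\Lambda_-^\varepsilon(\eta)$ and $\Lambda_+^\varepsilon(\eta)$ within $O(\varepsilon)$ of $\Lambda_\pm^0(\eta)$, and the order $\Lambda_-^\varepsilon<\Lambda_+^\varepsilon$ together with \eqref{EstL3b} forces the identifications $\Lambda_2^\varepsilon=\Lambda_-^\varepsilon$ and $\Lambda_3^\varepsilon=\Lambda_+^\varepsilon$ (with the appropriate sign convention). Since $\Lambda_-^0(\eta)\ge (2\pi-\delta_3)^2\cdot(\hbox{factor})$ and $\Lambda_+^0(\eta)\ge (2\pi+\delta_3)^2$ on the edges of $I_3$, the bounds in \eqref{(LL1)} follow on this region provided one keeps a safety strip $\{|\eta-0|\ge\varepsilon\psi_0\}\cap I_3$ and uses linearisations of $\Lambda_\pm^0$ analogous to the ones displayed in the last pair of display lines of Section~\ref{subsec71}.

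On the short transition interval $|\eta|\le\varepsilon\psi_0$ inside $I_4$, one invokes Theorem~\ref{Theorem_multiple1} directly: with $\eta=\varepsilon\psi$, $|\psi|\le\psi_0$, there are eigenvalues $\Lambda_\pm^\varepsilon(\varepsilon\psi)$ of \eqref{(11)}--\eqref{(14)} whose values are $4\pi^2+\varepsilon\Lambda_\pm'(\psi)+O(\varepsilon^2)$, and the sign inequalities \eqref{(K17n)} give $\Lambda_-'(\psi)\le -16\pi^2 m_1(\Xi)$ and $\Lambda_+'(\psi)\ge 8\pi^2|\omega|/H$ uniformly in $\psi$. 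Via Corollary~\ref{Th5.2} one identifies $\Lambda_1^\varepsilon(\eta)$ with the branch $\Lambda_\star^\varepsilon(\eta)$ coming from $\Lambda_1^0(\eta)=\eta^2$, so that the bound \eqref{EstL4} coming from Proposition~\ref{PropoR34} leaves exactly two eigenvalues $\Lambda_2^\varepsilon,\Lambda_3^\varepsilon$ in the box $I_4\times[0,4\pi^2+K_4]$; these must be $\Lambda_\pm^\varepsilon$. Hence the extrema in \eqref{(LL1)} attained on this central piece produce precisely the factors $-16\pi^2 m_1(\Xi)$ and $+8\pi^2|\omega|/H$.

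The matching between the two regimes, on the annulus $\varepsilon\psi_0\le|\eta|\le\delta_3$, is the delicate point and is handled exactly as in Section~\ref{subsec71}: one chooses $\psi_0>(C_0+16\pi^2m_1(\Xi))/3\pi$ (and another lower bound arising from the $|\omega|/H$ term) so that the gap $\Lambda_+^0(\eta)-\Lambda_-^0(\eta)=8\pi|\eta|$ dominates the $O(\varepsilon)$ Visik--Lyusternik error $2C_0\varepsilon$ from Theorem~\ref{Th5.1}, thereby preserving the identification $\Lambda_2^\varepsilon=\Lambda_-^\varepsilon$, $\Lambda_3^\varepsilon=\Lambda_+^\varepsilon$. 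The main obstacle is precisely this interface analysis: one has to make sure that the $O(\varepsilon^2)$ error in Theorem~\ref{Theorem_multiple1} (which depends on $\psi_0$) is compatible with the $O(\varepsilon)$ error in Theorem~\ref{Th5.1}, which forces $\psi_0$ to be tuned so that both extremum values produced in the central and outer regions are consistent. Once this tuning is verified, the estimates \eqref{(LL1)} are immediate, and the strict positivity $m_1(\Xi)+|\omega|/(2H)>0$ from Proposition~\ref{Proposition_W1} yields the lower bound \eqref{(MEP99)} for the width of $\gamma_2^\varepsilon$.
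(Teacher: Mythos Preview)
Your treatment of the region $I_4=[-\delta_3,\delta_3]$ is essentially the paper's: Theorem~\ref{Theorem_multiple1} on $|\eta|\le\varepsilon\psi_0$, Theorem~\ref{Th5.1} together with Corollary~\ref{Th5.2} and \eqref{EstL4} on $\varepsilon\psi_0\le|\eta|\le\delta_3$, with $\psi_0$ tuned as you indicate. That part is fine.

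The gap is in your handling of $I_3=[-\pi,-\delta_3]\cup[\delta_3,\pi]$. You assert that on $I_3$ the curves $\Lambda_\pm^0(\eta)$ ``stay simple'' and that Theorem~\ref{Th5.1} plus \eqref{EstL3b} then force $\Lambda_2^\varepsilon=\Lambda_-^\varepsilon$. This fails near $\eta=\pm\pi$, because $I_3$ contains the \emph{other} node $(\pm\pi,\pi^2)$ where $\Lambda_1^0(\eta)=\eta^2$ and $(2\pi-|\eta|)^2$ cross. In a window $|\pi-|\eta||\le\varepsilon\psi_1$, Theorem~\ref{Th5.1} only produces abstract eigenvalues $\Lambda_\star^\varepsilon,\Lambda_-^\varepsilon$ within $C_0\varepsilon$ of two limit values that are themselves $O(\varepsilon)$ apart; nothing prevents them from being the \emph{same} eigenvalue of \eqref{(11)}--\eqref{(14)}, in which case you have located only one eigenvalue below $4\pi^2+K_3$ and cannot bound $\Lambda_2^\varepsilon$. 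Your ``safety strip $\{|\eta-0|\ge\varepsilon\psi_0\}\cap I_3$'' does nothing here: $I_3$ is already bounded away from $0$; the trouble is at the opposite end, near $\pm\pi$.

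The paper resolves this by splitting $I_3$ once more and invoking Theorem~\ref{Theorem_multiple2} (the fine analysis near $(\pm\pi,\pi^2)$) on $\{\pi-|\eta|\le\varepsilon\psi_1\}$ to get directly $\Lambda_2^\varepsilon(\eta)\le\pi^2+K(\psi_1)\varepsilon<2\pi^2$, and only on $\{\pi-|\eta|\ge\varepsilon\psi_1\}$ does it reuse the identification argument of Section~\ref{subsec71} (which is where the linearisations you allude to actually live). For $\Lambda_3^\varepsilon$ on $I_3$ the paper does not attempt any identification at all: the crude bound \eqref{EstL3b}, $\Lambda_3^\varepsilon>4\pi^2+K_3$, already dominates $4\pi^2+8\pi^2\varepsilon|\omega|/H$ for small $\varepsilon$, so your attempt to match $\Lambda_3^\varepsilon$ with $\Lambda_+^\varepsilon$ there is unnecessary (and in fact not obviously correct, since $\Lambda_3^0$ may be $(\pi/H)^2+\eta^2$ rather than $(2\pi+|\eta|)^2$).
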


\section{Concluding remarks and open problems}\label{sec8}

We comment on other possible spectral gaps arising from other nodes of the limit dispersion curves which are not considered in previous sections.

\subsection{Closed and shaded gaps}\label{subsec81}

We note that the nodes marked with $\bullet$ and $\mbox{\tiny$\blacksquare$}$ in Figure~\ref{fig2} a)--c), can separate when dealing with the perturbed problem,
but do not give rise to spectral gaps because they are shaded by other dispersion curves in \textcolor{black}{Figure~\ref{fig3}} a)--c).
More precisely, the node $(0,4\pi^2)$ marked with $\circ$ in Figure~\ref{fig2}~a) gets the symbol $\bullet$ in {\color{black}Figure~\ref{fig2}} b) and c) because the spectral gap described in Theorem~\ref{Corollary_uno} is
shaded by a small perturbation, see Section~\ref{sec4}, of the limit
dispersion curves
 $$
 \Lambda=\frac{\pi^2}{H^2}+\eta^2,\,\,\eta\in[-\pi,\pi],
 \quad\mbox{\rm for}\,\,H\in\Big(\frac{1}{2},\frac{1}{\sqrt{3}}\Big),
 $$
 $$
 \Lambda=\frac{\pi^2}{H^2}+(\eta\mp2\pi)^2,\,\,\pm\eta\in[0,\pi],
 \quad\mbox{\rm for}\,\,H>\frac{1}{\sqrt{3}}.
 $$

In a similar way, after perturbation, the node $(\pm\pi,\pi^2)$, marked with
$\mbox{\tiny$\square$}$ in Figure~\ref{fig2} a) and b) provides an open
spectral gap when $H\in(0,1)$ but the same
node in Figure~\ref{fig2} c) is marked with $\mbox{\tiny$\blacksquare$}$
because the gap around it is shaded by a small perturbation of the
dispersion curve
$$
\Lambda=\frac{\pi^2}{H^2}+\eta^2,\,\,\eta\in[-\pi,\pi],\quad \mbox{\rm for}\,\,H>1.
$$

Other  nodes such as $(\eta_{\mbox{\small$\bullet$}},\Lambda_{\mbox{\small$\bullet$}})=(0,\pi^2H^{-2})$ and
$(\eta_{\mbox{\tiny$\blacksquare$}},\Lambda_{\mbox{\tiny$\blacksquare$}})= (\pm\pi,\pi^2(1+H^{-2}))$,
also detected in Figure~\ref{fig2} a)--c), do not give rise to open spectral gaps with some possible exceptions: $H=1$, $H=1/\sqrt{3}$, $H=1/2$,  $H=1/\sqrt{5}$, $H=1/\sqrt{8}$ and others (cf. Figures~\ref{fig_new} and \ref{fig6_new}).
To examine these nodes in these exceptional cases, important  modifications of our calculations in Section~\ref{subsec61} and \ref{subsec64} are needed and we postpone their study.
\begin{figure}[ht]
\begin{center}
\resizebox{!}{8cm} {\includegraphics{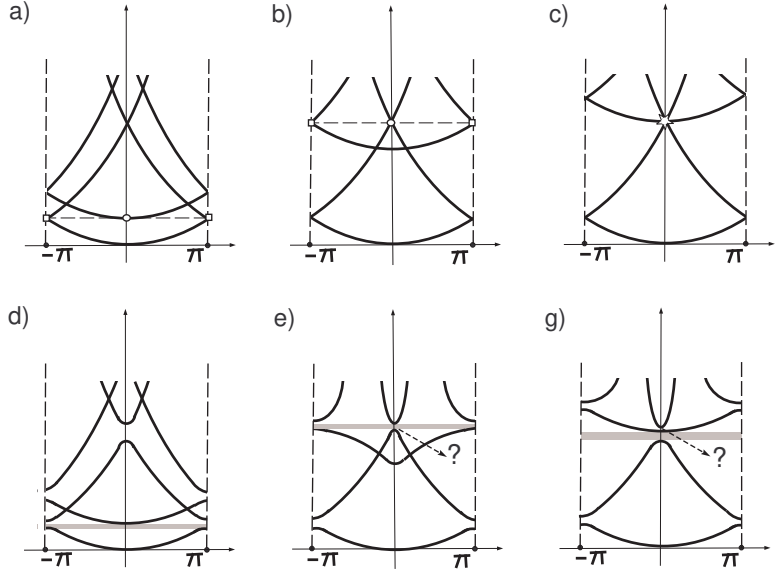}} \vspace{0.4cm}
\caption{The exceptional cases $H=1$, $H=1/\sqrt{3}$ and $H=1/2$.}  
\label{fig6_new}
\end{center}
\end{figure}

The nodes marked with $\mbox{\small{$\blacktriangleleft$}}$ and $\mbox{\small{$\blacktriangleright$}}$
in Figure~\ref{fig2} a), do not
give rise to open gaps due to another reason as depicted
schematically in Figure~\ref{fig_ascending}: both cases of perturbed curves do not
provide a gap. A rigorous justification of the absence of spectral gaps
around nodes generated by similar, either ascending, or descending,
dispersion curves can be found in \cite{na537}.
\begin{figure}[ht]
\begin{center}
\resizebox{!}{2.2cm} {\includegraphics{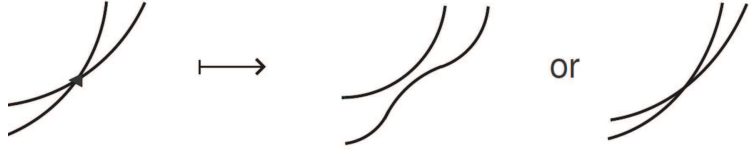}} \vspace{0.2cm}
\caption{The perturbation of ascending curves.}    
\label{fig_ascending}
\end{center}
\end{figure}

\subsection{On the symmetry assumption and possible generalizations} \label{subsec82}

Under the symmetry assumption \eqref{(symm)} we reduce the problem
\eqref{(11)}--\eqref{(14)} to the lower half
of the periodicity cell \eqref{(10)}
 \begin{equation}\begin{array}{c}
\displaystyle -\Delta
U^\varepsilon(x;\eta)=\Lambda^\varepsilon(\eta)U^\varepsilon(x;\eta),
\quad x\in\{x\in\varpi^\varepsilon:\,x_2<H/2\},   \vspace{0.2cm} \\
\displaystyle U^\varepsilon\Big(\frac{1}{2},x_2;\eta\Big)=e^{i\eta}U^\varepsilon\Big(-\frac{1}{2},x_2;\eta\Big), \quad
x_2\in\Big(0,\frac{H}{2}\Big), \vspace{0.3cm} \\
\displaystyle \frac{\partial U^\varepsilon}{\partial
x_1}\Big(\frac{1}{2},x_2;\eta\Big)=e^{i\eta} \frac{\partial
U^\varepsilon}{\partial x_1}\Big(-\frac{1}{2},x_2;\eta\Big), \quad
x_2\in\Big(0,\frac{H}{2}\Big), \vspace{0.3cm}\\
\displaystyle \partial_\nu U^\varepsilon(x)=0,\quad x\in \{x\in\partial\varpi^\varepsilon:\,\vert x_1\vert<1/2, \,\, x_2<H/2\}.
\end{array}\label{(L4)}\end{equation}
On the truncation line
$\Sigma^\varepsilon=\{x\in\varpi^\varepsilon:\,x_2=H/2\}$, we impose
an artificial boundary condition, either the Neumann condition
\begin{equation}\begin{array}{c}\displaystyle
\frac{\partial U^\varepsilon}{\partial
x_2}(x;\eta)=0,\,\,x\in\Sigma^\varepsilon,
\end{array}\label{(L5)}\end{equation}
or the Dirichlet one
\begin{equation}\begin{array}{c}
 U^\varepsilon(x;\eta)=0,\,\,x\in\Sigma^\varepsilon.
\end{array}\label{(L6)}\end{equation}
Clearly, in view of the geometrical symmetry the even (in the
variable $x_2-H/2$) extension above $\Sigma^\varepsilon$ of an
eigenfunction of the problem \eqref{(L4)}, \eqref{(L5)} becomes an
eigenfunction of the problem \eqref{(11)}--\eqref{(14)} with the
same eigenvalue while the odd extension does the same with an
eigenfunction of the problem \eqref{(L4)}, \eqref{(L6)}

A similar reduction of the limit problem \eqref{(29)}--\eqref{(25)}
divides the family \eqref{(31)} of eigenpairs into two groups
containing even ($q=2j$) and odd ($q=1+2j$) in the variable
$x_2-H/2$ eigenfunctions \eqref{(31)}. Hence, the eigenfunctions in the
first and second groups satisfy the Neumann and Dirichlet artificial
boundary conditions on the horizontal mid-line
$\{x\in\Pi^\varepsilon:\, x_2=H/2\}$ of the perforated strip
$\Pi^\varepsilon$. The limit dispersion curves are drawn in {\color{black}Figure~\ref{fig7}}
a) and b), respectively. The previous asymptotic analysis applied to
problems \eqref{(L4)}, \eqref{(L6)} and \eqref{(L4)}, \eqref{(L5)}
independently leads to the dispersion curves  in {\color{black}Figure~\ref{fig7}} c) and d),
respectively. Furthermore, the common graph in {\color{black}Figure~\ref{fig3}} b) is
obtained by uniting the latter graphs after perturbations so that the nodes
$\mbox{\tiny$\lozenge$}$ in Figure~\ref{fig2} b) do not
{\color{black}separate} in contrast to the nodes marked with $\mbox{\tiny$\square$}$ and
$\circ$ (see Figure~\ref{fig8_new}). We recognize this fact as the lack of interaction between
the intersecting curves \eqref{(K1)} with the index couples
$(j,k)=(\pm1,0)$ and $(j,k)=(0,1)$ in \eqref{(31)}.

\begin{figure}[th]
\begin{center}
\resizebox{!}{4.5cm} {\includegraphics{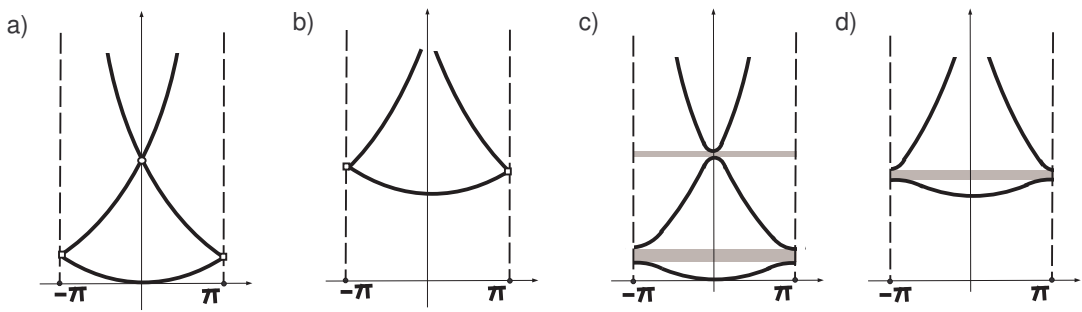}}
\caption{Disjoint trusses under the symmetry condition.}  \vspace{-0.2cm}
\label{fig7}
\end{center}
\end{figure}

\begin{figure}[th]
\begin{center}
\resizebox{!}{2cm} {\includegraphics{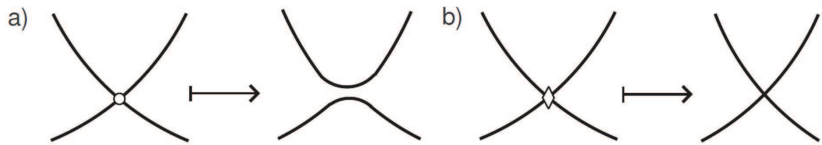}}
\caption{The perturbation of similar and dissimilar curves.}
\label{fig8_new}
\end{center}
\end{figure}

As depicted in Figure~\ref{fig3} b)--c), all nodes marked with
$\mbox{\tiny$\lozenge$}$ in Figure~\ref{fig2} do not split due to the geometrical symmetry \eqref{(symm)}.
One may hope that denying
the symmetry assumption \eqref{(symm)} provides
{\color{black}separation} of the nodes
$\mbox{\tiny$\lozenge$}$ to
open many gaps in
Figure~\ref{fig4} a)--c){\footnote{Actually these dispersion graphs are taken
from the paper \cite{na537} which analyze a quantum waveguides with
regularly perturbed walls.}}. However, we cannot confirm such  a splitting of band
edges by our present asymptotic analysis.

Another way to conclude on splitting by analyzing the first correction term in the
eigenvalue asymptotics only is to treat either inclined perforation
springs, Figure~\ref{fig9_new} a) or holes of varying size, Figure~\ref{fig9_new} b).
Again both modifications require a serious complication of calculations.

\begin{figure}[th]
\begin{center}
\resizebox{!}{1.8cm} {\includegraphics{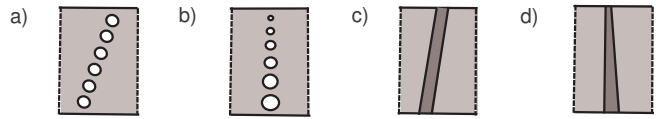}}
\caption{The distorted periodicity cells.}  \vspace{-0.5cm}
\label{fig9_new}
\end{center}
\end{figure}

\begin{figure}[th]
\begin{center}
\resizebox{!}{1.8cm} {\includegraphics{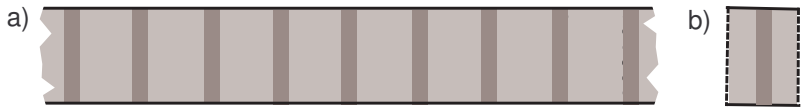}}
\caption{The waveguide with periodic strata.}
\label{fig10}
\end{center}
\end{figure}

A similar spectral problem in a stratified strip in Figure~\ref{fig10} a) with
foreign acoustic material in shaded thin rectangles can be solved
explicitly by separating variables. However, in the case of straight
and homogeneous strata as in {\color{black}Figure~\ref{fig10}} b), we again cannot conclude on
the splitting  of the nodes $\mbox{\tiny$\lozenge$}$ while
dealing with the first correction term only. To clarify the
possibility of opening corresponding spectral gaps, one can disturb
the strata as depicted in {\color{black}Figure~\ref{fig9_new}} c) and d), or even deal with curved
stratum in the periodicity cell, namely
 \begin{equation}\nonumber\begin{array}{c}
 \varsigma^\varepsilon=\{x:\,x_2\in(0,H),\,-\varepsilon h_-(x_2)<x_1-j<\varepsilon
 h_+(x_2)\},
 \end{array}\label{(Z1)}\end{equation}
where  $h\pm\in C^\infty[0,H]$ are profile functions such that
 \begin{equation}\nonumber\begin{array}{c}
 h(x_2)= h_+(x_2)+ h_-(x_2)>0,\,\,x_2\in[0,H].
 \end{array}\label{(Z2)}\end{equation}
However, this perturbation on the thin strips and those outlined in Figure~\ref{fig9_new} stay as open problems. A study of  the corresponding spectrum  will be undertaken in the forthcoming paper of the authors.

\section*{Acknowledgments}
The work has been partially supported by  MICINN  through  PGC2018-098178-B-I00,  PID2020-114703GB-I00 and     Severo Ochoa Programme for Centres of Excellence in R\&D
(CEX2019-000904-S).


\begin{thebibliography}{20}

\bibitem{AllaireConca_JMPA}
G.~Allaire and C.~Conca,
Bloch wave homogenization and spectral asymptotic analysis,
J.~Math. Pures Appl. {\bf 77} (1998) 153--208.

\bibitem{Attouch}
H. Attouch,
Variational Convergence for Functions and Operators.
Applicable Mathematics Series, Pitman, Boston, 1984.

\bibitem{BaCaNaTa}
F.L. Bakharev, G. Cardone, S.A. Nazarov and J. Taskinen,
Effects of Rayleigh waves on the essential spectrum in perturbed doubly periodic elliptic problems,
Integral Equations Operator Theory {\bf 88} (2017) 373--386.


\bibitem{BaNa}
F.L. Bakharev and S.A. Nazarov,
Gaps in the spectrum of a waveguide composed of domains with different limiting dimensions,
Sibirsk. Mat. Zh. {\bf 56} (2015) 732--751
(English transl.: Sib. Math. J. {\bf 56} (2015) 575--592).


\bibitem{BaNaRu}
F.L. Bakharev, S.A. Nazarov and K.M. Ruotsalainen,
A gap in the spectrum of the Neumann-Laplacian on a periodic waveguide,
Appl. Anal. {\bf 92} (2013) 1889--1995.


\bibitem{BaMEP}
F.L. Bakharev and  E. P\'erez,
Spectral gaps for the Dirichlet-Laplacian in a 3-D waveguide periodically perturbed by a family of concentrated masses,
Math. Nachr.  {\bf 291} (2018) 556--575.


\bibitem{BaTa}
F.L. Bakharev and J. Taskinen,
Bands in the spectrum of a periodic elastic waveguide,
Z.~Angew. Math. Phys. {\bf 68} (2017) paper 122, 27pp.


\bibitem{BLP}
A. Bensoussan, J.-L. Lions and  G. Papanicolau,
Asymptotic Analysis for Periodic Structures,
North-Holland Publishing Co., Amsterdam, 1978.




\bibitem{BiSo}
M.S. Birman and M.Z. Solomyak,
Spectral Theory and SelfAdjoint Operators in Hilbert Space,
Reidel  Publishing Co., Dordrecht, 1987.

\bibitem{BoPa}
D.I. Borisov and K.V. Pankrashkin,
Gap opening and split band edges in waveguides coupled by a periodic system of small windows,
Mat. Zametki {\bf 93} (2013) 665--683
(English transl.: Math. Notes {\bf 93} (2013) 660--675).

\bibitem{BoPa_Phys}
D.I. Borisov and K.V. Pankrashkin,
Quantum waveguides with small periodic perturbations: gaps and edges of Brillouin zones,
J. Phys. A {\bf 46} (2013) e235203, 18 pp.



\bibitem{CoPlaVa}
C.~Conca, J.~Planchard and M.~Vanninathan,
Fluids and Periodic Structures.
Masson, Paris, 1995.



\bibitem{Gel}
I.M. Gelfand,
Expansions in characteristic functions of an equation with periodic coefficients [in Russian],
Doklady  Akad. Nauk. SSSR {\bf 73} (1950) 1117--1120.

\bibitem{GoNaOrPe_ruso}
D.~G\'omez, S.A.~Nazarov, R.~Orive-Illera and  M.-E.~P\'erez-Mart\'{\i}nez, Remark on justification of asymptotics of spectra of cylindrical waveguides
with periodic singular perturbations of boundary and coefficients,  J. Math. Sci. {\bf 257} (2021) 597--623 (translated from Russian: Problemy Matematicheskogo Analiza  {\bf 111} (2021) 43--65).



\bibitem{Ger1}
R. Hempel and K Lienau,
Spectral properties of periodic media in the large coupling limit,
Comm. Partial Differential Equations {\bf 25} (2000) 1445--1470.

\bibitem{Ger2}
R. Hempel and O. Post,
Spectral gaps for periodic elliptic operators with high contrast: an overview,
in: Progress in Analysis, Proceedings of the 3rd International ISAAC Congress Berlin 2001, World Scientific Publ., River Edge, NJ, pp. 577--587, 2003.


\bibitem{Ilin}
A.M. Il'in,
Matching of Asymptotic Expansions of Solutions of Boundary Value Problems,
Am. Math. Soc., Providence, RI 1992.

\bibitem{Kuchbook}
P. Kuchment,
Floquet Theory for Partial Differential Equations,
Birkh\"auser Verlag, Basel, 1993.


\bibitem{Lad}
O.A. Ladyzhenskaya,
The Mathematical Theory of Viscous Incompressible Flow,
2nd ed., Nauka, Moscow 1970, 288 pp.;
(English transl., 2nd Eng. ed., rev. and enlarged, Math. Appl., vol. 2,
Gordon and Breach Science Publishers, New York-London-Paris 1969, xviii+224 pp.).


\bibitem{nuestroOleinik}
M.~Lobo, O.A.~Oleinik, M.E.~P\'erez and T.A.~Shaposhnikova,
On homogenization of solutions of boundary value problems in domains, perforated along manifolds.
Ann.~Scuola Norm.~Sup.~Pisa Cl. Sci.~4$^e$ s\'erie,  {\bf 25} (1997) 611--629.


\bibitem{MaNaPl}
V.G.~Maz'ya,  S.A.~Nazarov and   B.A.~Plamenevsky,
Asymptotic Theory of Elliptic Boundary Value Problems in Singularly Perturbed Domains Vols. 1 \& 2,
Birkh\"{a}user Verlag, Basel, 2000.



\bibitem{Marchenkobook}
V.A. Marchenko and E.Ya. Khruslov,
Boundary Value Problems in Domains with a Fine-Grained Boundary,
Naukova Dumka, Kiev, 1974 (in Russian).


\bibitem{na143}
S.A. Nazarov,
Asymptotic behavior of the solution of a Dirichlet problem in an angular domain with a periodically changing boundary,
Mat. Zametki. {\bf 49} (1991) 86--96
(English transl.: Math. Notes {\bf 49} (1991) 502--509).


\bibitem{na262}
S.A. Nazarov,
Polynomial property of selfadjoint elliptic boundary value problems and the algebraic description of their attributes,
Uspekhi Mat. Nauk. {\bf 54} (1999) 77--142
(English transl.: Russian Math. Surveys. {\bf 54} (1999) 947--1014).


\bibitem{na395}
S.A. Nazarov,
The Neumann problem in angular domains with periodic and parabolic perturbations of the boundary,
Tr. Moskov. Mat. Obs. {\bf 69} (2008) 182--241
(English transl.: Trans. Moscow Math. Soc. {\bf 69} (2008) 153--208).


\bibitem{na454}
S.A. Nazarov,
An example of multiple gaps  in the spectrum of a periodic waveguide,
Mat. Sb. {\bf 201} (2010) 99--124
(English transl.: Sb. Math. {\bf 201} (2010) 569--594).


\bibitem{na453}
S.A. Nazarov,
Opening a gap in the continuous spectrum of a periodically perturbed waveguide,
Mat. Zametki. {\bf 87} (2010) 764--786
(English transl.: Math. Notes. {\bf 87} (2010) 738--756).


\bibitem{na457}
S.A. Nazarov,
Variational and asymptotic methods for finding eigenvalues below the continuous spectrum threshold,
Sibirsk. Mat. Zh. {\bf 51} (2010) 1086--1101
(English transl.: Sib. Math. J. {\bf 51} (2010) 866--878).


\bibitem{na461}
S.A. Nazarov,
A gap in the essential spectrum of an elliptic formally selfadjoint system of differential equation,
Differ. Uravn. {\bf 46} (2010) 726--736
(English transl.: Differ. Equ. {\bf 46} (2010) 730--741).


\bibitem{na489}
S.A. Nazarov,
Asymptotic expansions of eigenvalues in the continuous spectrum of a regularly perturbed quantum waveguide,
Teoret. Mat. Fiz.  {\bf 167} (2011) 239--263,
(English transl.: Theoret. and Math. Phys.  {\bf 167} (2011) 606--627).


\bibitem{na537}
S.A. Nazarov,
Asymptotic behavior of spectral gaps in a regularly perturbed periodic waveguide,
Vestnik St.-Petersburg Univ. {\bf 7} (2013) 54--63
(English transl.: Vestnik St.-Petersburg Univ. Math. {\bf 46} (2013) 89--97).




\bibitem{SOME}
S.A.~Nazarov, R.~Orive-Illera and  M.-E.~P\'erez-Mart\'{\i}nez,
Asymptotic structure of the spectrum in a Dirichlet-strip with double periodic perforations,
Netw. Heterog. Media {\bf 14} (2019) 733--757.


\bibitem{NaPl}
S.A. Nazarov and B.A. Plamenevsky,
Elliptic Problems in Domains
with Piecewise Smooth Boundaries,
Walter de Gruyter \& Co., Berlin, 1994.


\bibitem{na501}
S.A. Nazarov, K. Ruotsalainen and J. Taskinen,
Essential spectrum of a periodic elastic waveguide may contain arbitrarily many gaps,
Appl. Anal.  {\bf 89} (2010) 109--124.


\bibitem{NaTa}
S.A. Nazarov and J. Taskinen,
Elastic and piezoelectric waveguides may have infinite number of gaps in their spectra,
C.R. Mecanique {\bf 344} (2016) 190--194.


\bibitem{NgSP}
G. Nguetseng and  E. Sanchez-Palencia.
Stress concentration for defects distributed near a
surface. in: Local Effects in the Analysis of Structures, Stud. Appl. Mech. {\bf 12}, Elsevier,
pp. 55--74, 1985.

\bibitem{Oleinikbook}
O.A.~Oleinik, A.S.~Shamaev, G.A~Yosifian.
Mathematical Problems in Elasticity and Homogenization. Studies in Mathematics and its Applications, 26.
North-Holland, Amsterdam, 1992.


\bibitem{PoSe}
G. P\`{o}lya and G. Szeg\"{o},
Isoperimetric Inequalities in Mathematical Physics,
Annals of Mathematics Studies, vol. 27, Princeton Univ. Press, Princeton, NJ 1951.


\bibitem{Post}
O. Post,
Spectral Analysis on Graph-Like Spaces,
Springer, Berlin, 2012.


\bibitem{ReedSimon}
M. Reed and B. Simon,
Methods of Modern Mathematical Physics. III: Scattering Theory,
Academic Press, New York, 1979.

\bibitem{SHSP}
J.~S\'anchez-Hubert and E.~S\'anchez-Palencia.
\newblock {Vibration and Coupling of Continuous Systems. Asymptotic Methods.}
\newblock  Springer-Verlag, Berlin, 1989.

\bibitem{Skrig}
M.M. Skriganov,
Geometric and arithmetic methods in the spectral theory of multidimensional periodic operators,
Proc. Steklov Inst. Math. {\bf 171} (1987) 1--121.


\bibitem{VD}
M. Van Dyke,
Perturbation Methods in Fluid Mechanics,
Appl. Math. Mech., vol. 8, Academic Press, New York-London, 1964.


\bibitem{ViLu}
M.I. Vishik and L.A. Ljusternik,
Regular degeneration and boundary layer for linear differential equations with small parameter,
Amer. Math. Soc. Transl. {\bf 20} (1962) 239--364.


\bibitem{Zhikov}
V.V. Zhikov,
Gaps in the spectrum of some elliptic operators in divergent form with periodic coefficients,
St. Petersb. Math. J. {\bf 16} (2005) 773--790.


\end{thebibliography}
\end{document}